\numberwithin{equation}{section}
\def\bb#1\eb{\textcolor{blue}
{#1}} %
\def\br#1\er{\textcolor{red}
{#1}} %
\def\bv#1\ev{\textcolor{green}
{#1}} %
\def\bc#1\ec{\textcolor{cyan}
{#1}} %
\def\Xint#1{\mathchoice
  {\XXint\displaystyle\textstyle{#1}}%
  {\XXint\textstyle\scriptstyle{#1}}%
  {\XXint\scriptstyle\scriptscriptstyle{#1}}%
  {\XXint\scriptscriptstyle\scriptscriptstyle{#1}}%
  \!\int}
\def\XXint#1#2#3{{\setbox0=\hbox{$#1{#2#3}{\int}$}
  \vcenter{\hbox{$#2#3$}}\kern-.5\wd0}}
\def\-int{\Xint -}
\newcommand{\R}{\mathbb{R}}
\newcommand{\N}{\mathbb{N}}
\DeclareMathOperator{\dive}{div}
\DeclareMathOperator{\X}{\mathbb{X}^{s}_{m}}
\DeclareMathOperator{\Y}{\mathbb{Y}^{s}_{m}}
\DeclareMathOperator{\Z}{\mathbb{Z}^{s}_{m}}
\DeclareMathOperator{\J}{\mathcal{J}}
\DeclareMathOperator{\h}{\mathbb{H}^{s}_{m}}
\DeclareMathOperator{\T}{\textup{Tr}}
\newcommand{\e}{\varepsilon}
\newtheorem{lem}{Lemma}[section]
\newtheorem{thm}{Theorem}[section]
\newtheorem{defn}{Definition}[section]
\begin{document}

\title[Periodic solutions for critical fractional equations]{Periodic solutions for critical fractional problems}
\author[V. Ambrosio]{Vincenzo Ambrosio}
\address{Vincenzo Ambrosio \hfill\break\indent
Dipartimento di Scienze Pure e Applicate (DiSPeA) \hfill\break\indent
Universit\`a degli Studi di Urbino 'Carlo Bo'
 \hfill\break\indent
Piazza della Repubblica, 13
61029 Urbino (Pesaro e Urbino, Italy)\hfill\break\indent}
\email{vincenzo.ambrosio2@unina.it}

\keywords{Non-local operators; Periodic solutions; Linking Theorem; Extension Method; Critical exponent}
\subjclass[2010]{35A15, 35J60, 35R11, 35B10, 35B33}

\begin{abstract}
We deal with the existence of $2\pi$-periodic solutions to the following non-local critical problem
\begin{equation*}
\left\{
\begin{array}{ll}
[(-\Delta_{x}+m^{2})^{s}-m^{2s}]u=W(x)|u|^{2^{*}_{s}-2}u+ f(x, u) &\mbox{ in } (-\pi,\pi)^{N}   \\
u(x+2\pi e_{i})=u(x)    &\mbox{ for all } x \in \R^{N}, \quad i=1, \dots, N,
\end{array}
\right.
\end{equation*}
where $s\in (0,1)$, $N \geq 4s$, $m\geq 0$, $2^{*}_{s}=\frac{2N}{N-2s}$ is the fractional critical Sobolev exponent, $W(x)$ is a positive continuous function, and $f(x, u)$ is a superlinear $2\pi$-periodic (in $x$) continuous function with subcritical growth.\\ 
When $m>0$, the existence of a nonconstant periodic solution is obtained by applying the Linking Theorem, after transforming the above non-local problem into a degenerate elliptic problem in the half-cylinder $(-\pi,\pi)^{N}\times (0, \infty)$, with a nonlinear Neumann boundary condition, through a suitable variant of the extension method in periodic setting.
We also consider the case $m=0$ by using a careful procedure of limit. As far as we know, all these results are new.
\end{abstract}

\maketitle

\section{Introduction}

\noindent
In the past years, a great attention has been devoted to the study of  nonlinear elliptic equations involving the critical Sobolev exponent. For instance, motivated by some variational problems in geometry and physics where a lack of compactness occurs, such as the Yamabe's problem \cite{Aubin},  in the celebrated paper \cite{BN}, Brezis and Nirenberg  considered the following critical boundary value problem
\begin{equation}\label{BNP}
\left\{
\begin{array}{ll}
-\Delta_{x}u=u^{\frac{N+2}{N-2}}+ f(x, u) &\mbox{ in } \Omega  \\
u=0    &\mbox{ on } \partial \Omega \\
u>0  &\mbox{ in }  \Omega, 
\end{array}
\right.
\end{equation}
where $N\geq 3$, $\Omega\subset \R^{N}$ is a bounded open set, and $f$ is a lower-order perturbation of $u^{\frac{N+2}{N-2}}$. 
Under appropriate assumptions on the nonlinearity and on the dimension of the space, they proved the existence of a positive solution to (\ref{BNP}) via Mountain Pass theorem \cite{AR}.\\
Subsequently, many authors investigated existence and multiplicity of nontrivial solutions to (\ref{BNP}) or some variants, by using suitable variational methods; see for instance \cite{CFP, DevSol, GR, struwe, Tarantello}. For critical problems in $\R^{N}$, we also cite \cite{BC, CY, FF, Lions}.

In this paper, we focus our attention on the following critical fractional  problem with periodic boundary conditions
\begin{equation}\label{P}
\left\{
\begin{array}{ll}
[(-\Delta_{x}+m^{2})^{s}-m^{2s}]u=W(x)|u|^{2^{*}_{s}-2}u+ f(x, u) &\mbox{ in } (-\pi,\pi)^{N}   \\
u(x+2\pi e_{i})=u(x)    &\mbox{ for all } x \in \R^{N}, \quad i=1, \dots, N,
\end{array}
\right.
\end{equation}
where $s\in (0,1)$, $N \geq 4s$, $2^{*}_{s}:=\frac{2N}{N-2s}$, $m\geq 0$, $f:\R^{N+1}\rightarrow \R$ is a continuous function satisfying suitable growth assumptions, and $(e_{i})$ is the canonical basis in $\R^{N}$.\\
The non-local operator $(-\Delta_{x}+m^{2})^{s}$ is defined through the spectral decomposition, by using the powers of the eigenvalues of $-\Delta_{x}+m^{2}$ with periodic boundary conditions.\\
Let $u\in\mathcal{C}^{\infty}_{2\pi}(\R^{N})$, that is $u$ is infinitely differentiable in $\R^{N}$ and $2\pi$-periodic in each variable.
We know that $u$ can be expressed via Fourier series 
$$
u(x)=\sum_{k\in \mathbb{Z}^{N}} c_{k} \frac{e^{\imath k\cdot x}}{(2\pi)^{\frac{N}{2}}} \quad (x\in \R^{N}),
$$
where 
$$
c_{k}=\frac{1}{(2\pi)^{\frac{N}{2}}} \int_{(-\pi,\pi)^{N}} u(x)e^{- \imath k \cdot x}dx \quad (k\in \mathbb{Z}^{N})
$$
are the Fourier coefficients of $u$.\\
Then, the operator $(-\Delta_{x}+m^{2})^{s}$ is defined by setting 
\begin{equation}\label{nfrls}
(-\Delta_{x}+m^{2})^{s} \,u=\sum_{k\in \mathbb{Z}^{N}} c_{k} (|k|^{2}+m^{2})^{s} \, \frac{e^{\imath  k\cdot x}}{(2\pi)^{\frac{N}{2}}}.
\end{equation}
This operator can be extended by density for any $u$ belonging to the Hilbert space 
$$
\h=\Bigl\{u=\sum_{k\in \mathbb{Z}^{N}} c_{k}\frac{e^{\imath  k\cdot x}}{(2\pi)^{\frac{N}{2}}} \in L^{2}(-\pi,\pi)^{N}: \sum_{k\in \mathbb{Z}^{N}} (|k|^{2}+m^{2})^{s} \, |c_{k}|^{2}<\infty \Bigr\}
$$
endowed with the norm
$$
|u|_{\h}^{2}=\sum_{k\in \mathbb{Z}^{N}} (|k|^{2}+m^{2})^{s} |c_{k}|^{2}.
$$
When $m=0$, the operator in (\ref{nfrls}) arises in models with periodic boundary conditions; see for instance \cite{CC, KNV, RS1}. 
We recall that in $\R^{N}$, the study of $(-\Delta_{x}+m^{2})^{s}-m^{2s}$ is motivated by the fractional quantum mechanics; indeed, when $s=\frac{1}{2}$, the operator $\sqrt{-\Delta_{x}+m^{2}}-m$ corresponds to the Hamiltonian describing the motion of a free relativistic particle of mass $m$; see \cite{LL} for more details.
On the other hand, from a probabilistic point of view, the operator $(-\Delta_{x}+m^{2})^{s}-m^{2s}$ has an important role in the Stochastic Process Theory, because it is the infinitesimal generator of the so-called $2s$-stable relativistic process; see \cite{Ryz} and references therein. \\
Recently, the study of fractional and non-local operators of elliptic type has achieved an enormous popularity, thanks to the interesting theoretical structure of these operators, and in view of concrete applications such as phase transitions, flames propagation, quasi-geostrophic flows, population dynamics, American options in finance, conservation laws, crystal dislocation, minimal surfaces. 
The interested reader may consult \cite{DPV, MBRS} and references therein.\\
More in general, nonlinear equations involving fractional operators are currently actively studied.
Caffarelli et al. \cite{CSS} investigated the regularity of solutions for a fractional obstacle problem. 
Cabr\'e and Sol\`a Morales \cite{Cabsolmor}  analyzed the existence, uniqueness, symmetry and variational properties of layer solutions for $\sqrt{-\Delta_{x}}u=-G'(u)$ in $\R$, where $G$ is a double well potential with two absolute minima. Felmer et al. \cite{FQT} dealt with the existence, regularity and asymptotic behavior of positive solutions for a superlinear fractional Schr\"odinger equation in $\R^{N}$. 
Servadei and Valdinoci \cite{SV1} established, via min-max arguments, the existence of nontrivial solutions for equations driven by a non-local integrodifferential operator with homogeneous Dirichlet boundary conditions.
Stinga and Volzone \cite{StingaVol} proved the existence of noncostant least energy positive regular solutions for a fractional semilinear Neumann problem.\\
Specially, the existence and the multiplicity of solutions of critical fractional elliptic problems in bounded domains and in $\R^{N}$, have been widely investigated by many authors.\\
Firstly, we recall some fundamental results established in bounded domains. Servadei and Valdinoci \cite{SV3} (see also \cite{MBMaw, S1, S2}) obtained the existence of nontrivial solutions for a Brezis-Nirenberg type problem involving a non-local integrodifferential operator.
Barrios et al. \cite{BCPS} (see also \cite{tan}), studied the effect of lower order perturbations in the existence of positive solutions to a critical elliptic problem with spectral Laplacian.
Autuori et al. \cite{AFP} investigated the existence and the asymptotic behavior of non-negative solutions for a class of critical stationary Kirchhoff  problems involving a critical nonlinearity.\\ 
Secondly, we mention some results for critical problems in $\R^{N}$.
Shang et al. \cite{SZY} dealt with the existence and the multiplicity of ground states for a fractional Schr\"odinger equation by using the method of Nehari manifold and Ljusternik-Schnirelmann category theory; see also \cite{A6}.
Dipierro et al. \cite{DMPV} obtained, via Lyapunov-Schmidt technique,  some bifurcation results for a  fractional elliptic equation with critical exponent. Pucci and Saldi \cite{PucSal} established the existence and multiplicity of nontrivial non-negative entire (weak) solutions of a critical Kirchhoff eigenvalue problem, involving a general nonlocal integro-differential operator.
Teng \cite{teng} proved the existence of ground state solutions for a nonlinear fractional Schr\"odinger-Poisson system  in $\R^{3}$, via the monotonicity trick and a global compactness Lemma.

Motivated by the interest that the mathematical community has focused on fractional problems  involving the critical Sobolev exponent, the aim of this paper is to investigate the existence of nonconstant periodic solutions for the critical nonlinear problem (\ref{P}). We point out that, to the best of our knowledge, there are few existence results \cite{A2, A3, A4, A5, AMB5} for nonlocal equations with periodic boundary conditions, and all of them involve superlinear nonlinearities with subcritical growth. Therefore, the results that we present here, can be considered as the first existence results regarding critical fractional problems in periodic setting. \\ 
Now, we state our main assumptions. In order to find (weak) periodic solutions to (\ref{P}), we will assume that the nonlinearity $f:\R^{N+1}\rightarrow \R$ satisfies the following hypotheses:
\begin{compactenum}[($f1$)]
\item $f(x, t)$ is $2\pi$-periodic in $x \in \R^{N}$; that is $f(x+2\pi e_{i},t)=f(x, t)$ for any $x\in \R^{N}$, $t\in \R$; 
\item $f$ is continuous in $\R^{N+1}$; 
\item $f(x, t)=o(t)$ as $t \rightarrow 0$ uniformly in $x\in \R^{N}$;
\item there exist $2<p<2^{*}_{s}$ and $C>0$ such that
$$
|f(x, t)|\leq C(1+|t|^{p-1})
$$
for any $x\in \R^{N}$ and $t\in \R$;
\item there exists $\mu>2$ such that
$$
0<\mu F(x, t)\leq t f(x, t)
$$
for any $x\in \R^{N}$ and $t\in \R\setminus \{0\}$. Here $\displaystyle{F(x,t)=\int_{0}^{t} f(x,\tau) d\tau}$;
\item there exists a function $\bar{f}$ such that $f(x, t)\geq \bar{f}(t)$ a.e. for $x\in \mathcal{A}$ and $t\geq 0$, where $\mathcal{A}$ is some nonempty open set in $[-\pi, \pi]^{N}$ and the function $\displaystyle{\bar{F}(t)=\int_{0}^{t} \bar{f}(\tau) d\tau}$ satisfies 
$$
\lim_{\e\rightarrow 0} \e^{\min\left \{\frac{N+2s}{2},\frac{p(N-2s)}{2}\right\}} \int_{0}^{\e^{-1}} \bar{F}\left[\left(\frac{\e^{-1}}{1+t^{2}}\right)^{\frac{N-2s}{2}}\right] t^{N-1} dt=\infty.
$$
\end{compactenum}
Let us observe that if $\bar{F}(t)=|t|^{p}$, then this condition is obviously satisfied.
\smallskip

\noindent
Concerning the function $W: \R^{N}\rightarrow \R$, we suppose that  it satisfies the following properties:
\begin{compactenum}[($W1$)]
\item $W\in \mathcal{C}^{0}(\R^{N}, \R)$, $\displaystyle{\min_{x\in [-\pi, \pi]^{N}} W(x)>0}$ and $W(x+2\pi e_{i})=W(x)$ for all $x\in \R^{N}$; 
\item $\displaystyle{W(0)=\max_{x\in [-\pi, \pi]^{N}} W(x)}$ and $W(x)=W(0)+O(|x|^{2s})$ as $|x|\rightarrow 0$.
\end{compactenum}
We note that when $s=1$, the assumption $(W2)$ has been introduced by Escobar in \cite{Esc}.

\noindent
Our first main result can be stated as follows.
\begin{thm}\label{mthm1}
Let $m>0$ and $f:\R^{N+1} \rightarrow \R$ be a function satisfying the assumptions $(f1)$-$(f6)$. Assume that $W$ verifies $(W1)$-$(W2)$.
Then there exists a nonconstant solution $u\in \h$ to $(\ref{P})$. In particular, $u$ belongs to $\mathcal{C}^{0,\alpha}([-\pi,\pi]^{N})$ for some $\alpha \in (0,1)$.
\end{thm}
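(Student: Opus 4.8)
\emph{Proof plan.} The plan is to recast (\ref{P}) as an equivalent variational problem on a half-cylinder through an extension procedure adapted to the periodic framework, and then to produce a critical point by the Linking Theorem, the loss of compactness due to the critical exponent being overcome by a sharp estimate of the minimax level. For $m>0$ and $u\in\h$, let $E(u)$ be the unique finite-energy $2\pi$-periodic (in $x$) solution of $-\dive(y^{1-2s}\nabla U)+m^{2}y^{1-2s}U=0$ in $\mathcal{C}:=(-\pi,\pi)^{N}\times(0,\infty)$ with $U(\cdot,0)=u$. Working in the weighted Sobolev space $\X$ of periodic functions with $\int_{\mathcal{C}}y^{1-2s}(|\nabla U|^{2}+m^{2}U^{2})<\infty$, the trace identity $-\kappa_{s}\lim_{y\to0^{+}}y^{1-2s}\partial_{y}U=(-\Delta_{x}+m^{2})^{s}u$ shows that the critical points of
\begin{align*}
\J(U)={}&\frac{\kappa_{s}}{2}\int_{\mathcal{C}}y^{1-2s}\bigl(|\nabla U|^{2}+m^{2}U^{2}\bigr)\,dx\,dy-\frac{m^{2s}}{2}\int_{(-\pi,\pi)^{N}}|U(x,0)|^{2}\,dx\\
&-\frac{1}{2^{*}_{s}}\int_{(-\pi,\pi)^{N}}W(x)|U(x,0)|^{2^{*}_{s}}\,dx-\int_{(-\pi,\pi)^{N}}F(x,U(x,0))\,dx
\end{align*}
on $\X$ are exactly the extensions $E(u)$ of the weak solutions $u$ of (\ref{P}); the hypotheses $(f1)$--$(f5)$ and the trace embeddings of $\X$ into $L^{p}$ (compact for $2\le p<2^{*}_{s}$) and $L^{2^{*}_{s}}$ (continuous) of the base make $\J$ of class $\mathcal{C}^{1}$.

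Next I would verify the linking structure. The eigenvalues $(|k|^{2}+m^{2})^{s}-m^{2s}$, $k\in\mathbb{Z}^{N}$, of $(-\Delta_{x}+m^{2})^{s}-m^{2s}$ are strictly positive except for the simple one $0$ (at $k=0$), so the quadratic part of $\J$ is coercive on the codimension-one subspace $\mathbb{W}^{+}$ of $\X$ orthogonal to the line $\mathbb{W}^{0}$ generated by $E(1)$. From $(f3)$--$(f4)$ and the Sobolev inequality one obtains $\J\ge\alpha>0$ on a small sphere $S=\{U\in\mathbb{W}^{+}:\|U\|_{\X}=\rho\}$; on the other hand $(f5)$ gives $F(x,t)\ge c_{1}|t|^{\mu}-c_{2}$ with $\mu>2$, which together with $W>0$ forces $\J\to-\infty$ along $\mathbb{W}^{0}$ and along every ray $\R^{+}e$ with $e\in\mathbb{W}^{+}$. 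Choosing $e$ to be a suitably truncated Aubin--Talenti bubble concentrated at $x=0$, the sphere $S$ links the boundary of the box $Q=(\overline{B_{R}}\cap\mathbb{W}^{0})\oplus\{re:0\le r\le R\}$ and $\J|_{\partial Q}\le0$ for $R$ large.

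The heart of the matter is the estimate of the linking level $c:=\inf_{h\in\Gamma}\sup_{Q}\J\circ h$. One must prove $c<c^{*}$, which by the definition of $c$ reduces to showing $\sup_{Q}\J<c^{*}$ for a suitable truncated bubble $e$, where $c^{*}=\frac{s}{N}S_{*}^{N/2s}$ is the explicit threshold (built from the best fractional Sobolev constant of the extension and from $W(0)=\max W$) below which the Palais--Smale condition holds. Inserting the bubbles $u_{\e}$ into $\J(a\,E(1)+r\,E(u_{\e}))$ and maximising in $(a,r)$, the critical term yields $c^{*}$ at leading order because $(W2)$ makes $\int(W(0)-W(x))|u_{\e}|^{2^{*}_{s}}$ a negligible remainder, while the negative perturbation $-\int F(x,ru_{\e})$ supplies the gain that beats the residual errors, among them those generated by truncating the bubble so that it lives in $(-\pi,\pi)^{N}$; it is precisely here that hypothesis $(f6)$ and the restriction $N\ge4s$ are needed. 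Granted $c<c^{*}$, a $(PS)_{c}$ sequence is bounded in $\X$ by the Ambrosetti--Rabinowitz condition $(f5)$, its weak limit is a critical point of $\J$, and a Brezis--Lieb splitting of the critical term combined with $c<c^{*}$ rules out concentration of mass on the boundary and gives strong convergence. I expect this level estimate, together with the ensuing concentration-compactness analysis in the extended problem, to be the main obstacle.

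Finally, the Linking Theorem yields $U\in\X$ with $\J'(U)=0$ and $\J(U)=c\ge\alpha>0$, so $u:=U(\cdot,0)\in\h$ is a weak solution of (\ref{P}); testing $\J'(U)=0$ with $U$ and using $\J(U)>0=\J(0)$ gives $u\not\equiv0$, whereas a nonzero constant $a_{0}$ is impossible, since it would force $W(x)|a_{0}|^{2^{*}_{s}-2}a_{0}+f(x,a_{0})\equiv0$, in contradiction with $W>0$ and the sign condition $t f(x,t)>0$ from $(f5)$; hence $u$ is nonconstant. To conclude, a Brezis--Kato/Moser iteration on the boundary reaction term gives $u\in L^{q}((-\pi,\pi)^{N})$ for every $q<\infty$, and the De Giorgi--Nash--Moser theory for the degenerate operator $\dive(y^{1-2s}\nabla\,\cdot\,)$, followed by a standard bootstrap, upgrades this to $u\in\mathcal{C}^{0,\alpha}([-\pi,\pi]^{N})$ for some $\alpha\in(0,1)$.
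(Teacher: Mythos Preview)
Your plan is correct and mirrors the paper's approach almost exactly: extension to the half-cylinder, the linking decomposition $\X=\langle E(1)\rangle\oplus\{\text{zero-mean}\}$, truncated Aubin--Talenti bubbles together with $(W2)$ and $(f6)$ to push the minimax level below the compactness threshold $\frac{s}{N}W(0)^{-\frac{N-2s}{2s}}S_{*}^{N/2s}$, Brezis--Lieb for the $(PS)_{c}$ condition, and a Moser iteration for $\mathcal{C}^{0,\alpha}$ regularity. The only refinement the paper adds is that, to handle the $m^{2}$ weight in the extended equation cleanly, the linking direction is taken in the specific form $z_{\e}=[\varphi_{\e}-(\varphi_{\e})_{\Pi}]\,\theta(m\xi)$ (truncated bubble minus its mean, times the Bessel profile), for which an exact identity reduces $\|z_{\e}\|_{\X}^{2}-m^{2s}|\T(z_{\e})|_{2}^{2}$ to the pure Dirichlet energy of $\varphi_{\e}$; you should expect to need this (or an equivalent trick) when you carry out the level estimate.
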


\noindent
The proof of the above theorem is obtained by applying critical point theory.
As customary in many fractional problems \cite{CabTan, CafSil, StingaTor}, to overcome the difficulty due to the presence of the involved nonlocal operator, we study problem $(\ref{P})$ by using an alternative formulation of the operator (\ref{nfrls}), which consists of realizing $(-\Delta_{x}+m^{2})^{s}$ as an operator that maps a Dirichlet boundary condition to a Neumann-type condition via an extension problem on the half-cylinder $(-\pi,\pi)^{N} \times (0,\infty)$ with periodic boundary conditions on $\partial (-\pi,\pi)^{N} \times [0,\infty)$.

\noindent
More precisely, as proved in \cite{A2, A3}, for any $u\in \h$ one considers the problem
\begin{equation*}
\left\{
\begin{array}{ll}
-\dive(\xi^{1-2s} \nabla v)+m^{2}\xi^{1-2s}v =0 &\mbox{ in }\mathcal{S}_{2\pi}:=(-\pi,\pi)^{N} \times (0,\infty)  \\
v_{| {\{x_{i}=0\}}}= v_{| {\{x_{i}=2\pi\}}} & \mbox{ on } \partial_{L}\mathcal{S}_{2\pi}:=\partial (-\pi,\pi)^{N} \times [0,\infty) \\
v(x,0)=u(x)  &\mbox{ on }  \partial^{0}\mathcal{S}_{2\pi}:=(-\pi,\pi)^{N} \times \{0\},
\end{array}
\right.
\end{equation*}
from where the operator $(-\Delta_{x}+m^{2})^{s}$ is obtained as 
$$
-\lim_{\xi\rightarrow 0} \xi^{1-2s} \frac{\partial v}{\partial \xi}(x,\xi) = \kappa_{s} (-\Delta_{x} + m^{2})^{s} u(x) 
$$
in weak sense and $\displaystyle{\kappa_{s}= 2^{1-2s} \frac{\Gamma(1-s)}{\Gamma(s)}}$; see \cite{CafSil}.

\noindent
Taking into account this fact, instead of (\ref{P}), we are led to consider the following problem
\begin{equation}\label{R}
\left\{
\begin{array}{ll}
-\dive(\xi^{1-2s} \nabla v)+m^{2}\xi^{1-2s}v =0 &\mbox{ in }\mathcal{S}_{2\pi}:=(-\pi,\pi)^{N} \times (0,\infty)  \\
v_{| {\{x_{i}=0\}}}= v_{| {\{x_{i}=2\pi\}}} & \mbox{ on } \partial_{L}\mathcal{S}_{2\pi}:=\partial (-\pi,\pi)^{N} \times [0,\infty) \\
\frac{\partial v}{\partial \nu^{1-2s}}=\kappa_{s} [m^{2s}v+W(x)|v|^{2^{*}_{s}-2}v+ f(x, v)]   &\mbox{ on }\partial^{0}\mathcal{S}_{2\pi}:=(-\pi,\pi)^{N} \times \{0\},
\end{array}
\right.
\end{equation}
where 
$$
\frac{\partial v}{\partial \nu^{1-2s}}:=-\lim_{\xi \rightarrow 0} \xi^{1-2s} \frac{\partial v}{\partial \xi}(x,\xi)
$$
is the conormal exterior derivative of $v$.\\
Then, it is clear that solutions to $(\ref{R})$ can be characterized as critical points of the following Euler-Lagrange functional 
 $$
 \mathcal{J}_{m}(v)=\frac{1}{2} \|v\|_{\X}^{2}-\frac{\kappa_{s} m^{2s}}{2}|\T(v)|_{L^{2}(-\pi,\pi)^{N}}^{2} -\frac{\kappa_{s}}{2^{*}_{s}}|W^{\frac{1}{2^{*}_{s}}}\T(v)|_{L^{2^{*}_{s}}(-\pi,\pi)^{N}}^{2^{*}_{s}}-\kappa_{s}\int_{\partial^{0}\mathcal{S}_{2\pi}} F(x,\T(v)) \,dx
 $$
defined on the space $\X$, which is the closure of the set of smooth and $T$-periodic (in $x$) functions in $\R^{N+1}_{+}$ with respect to the norm
$$
\|v\|_{\X}^{2}:=\iint_{\mathcal{S}_{2\pi}} \xi^{1-2s} (|\nabla v|^{2}+m^{2s} v^{2}) \, dx \,d\xi.
$$ 
Differently from the papers \cite{A2, A3, A4, AMB5} dealing with subcritical problems, the main difficulty in studying (\ref{R}), is the lack of compactness of the embedding $\h$ into the space $L^{2^{*}_{s}}(-\pi, \pi)^{N}$ (see for instance \cite{PP}), which does not permit to verify that the Palais-Smale condition holds in all energy range $\R$. 
To overcome this difficulty, we construct a suitable periodic cut-off function (see Lemma \ref{Linking2}), and we prove that, for any fixed $m>0$, the functional $\mathcal{J}_{m}$ has the geometric structure required by the Linking Theorem \cite{Rab}, and that $\mathcal{J}_{m}$ satisfies the Palais-Smale condition at every level  $c<c^{*}$, with $c^{*}$   related to the best constant of the embedding of the fractional Sobolev space $H^{s}(\R^{N})$ into $L^{2^{*}_{s}}(\R^{N})$ (see \cite{CT}). After that, we will also study the regularity of the critical points of $\mathcal{J}_{m}$, and we show that every solution to (\ref{P}) is H\"older continuous.
\medskip

\noindent
In the second part of the paper, we focus our attention on the existence of periodic solutions to (\ref{P}) in the case $m=0$. 
In order to accomplish our purpose, we first show that, for any $m>0$ sufficiently small, the critical levels  of the functionals $\mathcal{J}_{m}$ can be estimated from below and from above by two positive constants independent of $m$. 
Then, we exploit this information to pass to the limit as $m\rightarrow 0$ in (\ref{R}), and we deduce the existence of a solution to the problem
\begin{equation}\label{P'}
\left\{
\begin{array}{ll}
(-\Delta_{x})^{s} u=W(x)|u|^{2^{*}_{s}-2}u+f(x, u) &\mbox{ in } (-\pi,\pi)^{N}   \\
u(x+2\pi e_{i})=u(x)    &\mbox{ for all } x \in \R^{N}, \quad i=1, \dots, N.
\end{array}
\right.
\end{equation}
To prove that this solution is nonconstant, we take advantage of the lower bound for the critical level of $\mathcal{J}_{m}$, and we borrow some ideas used in the proof of the Palais-Smale compactness condition.
More precisely, we obtain the following result.
\begin{thm}\label{mthm2}
Under the same assumptions on $f$ and $W$ of Theorem \ref{mthm1}, 
the problem $(\ref{P'})$ admits a nonconstant solution $u\in \h \cap\, \mathcal{C}^{0,\alpha}([-\pi,\pi]^{N})$, for some $\alpha \in (0,1)$.
\end{thm}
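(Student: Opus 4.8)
The strategy is to obtain the solution of $(\ref{P'})$ as the limit, as $m\to 0^{+}$, of the solutions of $(\ref{P})$ produced by Theorem \ref{mthm1}. For each small $m>0$ let $v_{m}\in\X$ be the critical point of $\mathcal{J}_{m}$ found there through the Linking Theorem, and let $c_{m}=\mathcal{J}_{m}(v_{m})$ be the corresponding linking level. The first step is to show that $c_{m}$ is trapped between two positive constants independent of $m$. Taking the same linking set $Q$ used for Theorem \ref{mthm1} (a fixed family of smooth $2\pi$-periodic functions, constructed from the periodic cut-off of Lemma \ref{Linking2}, hence not depending on $m$), one has $c_{m}\le\sup_{Q}\mathcal{J}_{m}$; since the terms of $\mathcal{J}_{m}$ carrying a positive power of $m$ tend to zero uniformly on the bounded set $Q$, this upper bound converges to $\sup_{Q}\mathcal{J}_{0}$, and the Brezis--Nirenberg type estimate behind Theorem \ref{mthm1} (which uses $(W2)$, $(f6)$ and the restriction $N\ge 4s$) yields $\sup_{Q}\mathcal{J}_{0}<c^{*}$. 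Hence there are $\delta>0$ and $m_{0}>0$ with $c_{m}\le c^{*}-\delta$ for every $m\in(0,m_{0})$. For the lower bound, the geometry of the Linking Theorem gives $c_{m}\ge\inf_{S}\mathcal{J}_{m}\ge\underline{c}>0$ with $\underline{c}$ independent of $m$, using that, for $m\in(0,m_{0}]$, the quadratic part of $\mathcal{J}_{m}$ controls the fractional seminorm of the trace uniformly in $m$.

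I would then derive uniform bounds and pass to the limit in the equation. Writing $c_{m}=\mathcal{J}_{m}(v_{m})-\tfrac{1}{\sigma}\langle\mathcal{J}_{m}'(v_{m}),v_{m}\rangle$ with $\sigma=\min\{\mu,2^{*}_{s}\}>2$ (the last term vanishing since $v_{m}$ is a critical point), and using $(f5)$ together with the analogous Ambrosetti--Rabinowitz inequality satisfied by the critical term, one gets that $\|v_{m}\|_{\X}$, $|\T(v_{m})|_{L^{2^{*}_{s}}(-\pi,\pi)^{N}}$ and $\int_{\partial^{0}\mathcal{S}_{2\pi}}f(x,\T(v_{m}))\T(v_{m})\,dx$ are bounded uniformly in $m$. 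Up to a subsequence, $v_{m}\rightharpoonup v$ in the limit space $\mathbb{X}^{s}_{0}$, $\T(v_{m})\rightharpoonup\T(v)$ in $L^{2^{*}_{s}}(-\pi,\pi)^{N}$, and $\T(v_{m})\to\T(v)$ in $L^{q}(-\pi,\pi)^{N}$ for every $q<2^{*}_{s}$ and a.e., by the compactness of the subcritical embeddings. Passing to the limit in $\langle\mathcal{J}_{m}'(v_{m}),\varphi\rangle=0$ for an arbitrary smooth $2\pi$-periodic test function $\varphi$ is now routine: the Dirichlet term converges by weak convergence of $\nabla v_{m}$; the terms containing a positive power of $m$ vanish because their remaining factors are bounded; the subcritical term converges by $(f4)$ (with $p<2^{*}_{s}$) together with the strong $L^{q}$-convergence of the traces; and the critical term converges since $|\T(v_{m})|^{2^{*}_{s}-2}\T(v_{m})$ is bounded in $L^{(2^{*}_{s})'}$ and converges a.e., hence weakly, against $\T(\varphi)\in L^{2^{*}_{s}}$. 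Therefore $\mathcal{J}_{0}'(v)=0$, and $u:=\T(v)$ is a weak solution of $(\ref{P'})$.

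The crux is to rule out a loss of mass by concentration, so that $v\ne0$; this is where the lower bound $c_{m}\ge\underline{c}$ and the ideas from the Palais--Smale analysis of Theorem \ref{mthm1} come in. Setting $w_{m}:=\T(v_{m})-\T(v)\rightharpoonup 0$ and combining the Brezis--Lieb lemma for the critical term with the orthogonality-type expansion of the quadratic part of $\mathcal{J}_{m}$ (the mass and cross terms being negligible as $m\to0$), testing $\mathcal{J}_{m}'(v_{m})$ with $v_{m}$ and using $\mathcal{J}_{0}'(v)=0$ yields $\|v_{m}-v\|^{2}=\kappa_{s}|W^{1/2^{*}_{s}}w_{m}|_{L^{2^{*}_{s}}}^{2^{*}_{s}}+o(1)=:\ell+o(1)$. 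The fractional Sobolev inequality (with best constant as in \cite{CT}) together with $(W2)$ and the elementary inequality $(|k|^{2}+m^{2})^{s}\ge|k|^{2s}$ then forces $\ell=0$ or $\ell\ge\ell^{*}$, where $\ell^{*}$ is normalized so that $(\tfrac12-\tfrac1{2^{*}_{s}})\ell^{*}=c^{*}$. If $\ell\ge\ell^{*}$, expanding $\mathcal{J}_{m}(v_{m})$ by Brezis--Lieb gives $\mathcal{J}_{0}(v)=\lim_{m\to0}c_{m}-(\tfrac12-\tfrac1{2^{*}_{s}})\ell\le(c^{*}-\delta)-c^{*}<0$, which contradicts the fact that any critical point of $\mathcal{J}_{0}$ has nonnegative energy (a consequence of $(f5)$ and of the Ambrosetti--Rabinowitz inequality for the critical term). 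Hence $\ell=0$, so $v_{m}\to v$ strongly, $\mathcal{J}_{0}(v)=\lim_{m\to0}c_{m}\ge\underline{c}>0$, and in particular $v\ne0$. Running this concentration--compactness analysis along the $m$-dependent spaces $\X$, while exploiting the uniform strict inequality $c_{m}\le c^{*}-\delta$, is the main technical obstacle of the proof.

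Finally, $u$ is nonconstant: if $u\equiv\xi_{0}$ were constant, then $u$ being nonzero its extension would be the constant function $v\equiv\xi_{0}$, so $\|v\|_{\mathbb{X}^{s}_{0}}=0$, and $\langle\mathcal{J}_{0}'(v),v\rangle=0$ would read $\kappa_{s}|W^{1/2^{*}_{s}}\xi_{0}|_{L^{2^{*}_{s}}(-\pi,\pi)^{N}}^{2^{*}_{s}}+\kappa_{s}\int_{(-\pi,\pi)^{N}}f(x,\xi_{0})\xi_{0}\,dx=0$, which is impossible because $W>0$ by $(W1)$ and $tf(x,t)>0$ for $t\ne0$ by $(f5)$. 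Hence $u$ is a nonconstant solution of $(\ref{P'})$. Applying to $(\ref{P'})$ the same regularity bootstrap as for Theorem \ref{mthm1} then gives $u\in\h\cap\mathcal{C}^{0,\alpha}([-\pi,\pi]^{N})$ for some $\alpha\in(0,1)$.
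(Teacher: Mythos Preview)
Your proposal is correct and follows essentially the same route as the paper's Section~6: uniform upper and lower bounds on the linking levels $c_{m}$, uniform bounds on $v_{m}$, a weak limit $v$ solving the $m=0$ extended problem, a Brezis--Lieb/concentration dichotomy (using $c_{m}<c^{*}$ uniformly and $\mathcal{J}_{0}(v)\ge 0$) to force strong convergence, and nonconstancy via $(W1)$ and $(f5)$.

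Two small corrections are in order. First, the linking set in the paper \emph{does} depend on $m$: both $\Y=\langle\theta(m\xi)\rangle$ and $z_{\e}=[\varphi_{\e}-(\varphi_{\e})_{\Pi}]\theta(m\xi)$ involve $m$, so your claim that $Q$ is $m$-independent is inaccurate. The uniform strict upper bound $c_{m}\le\sigma_{2}<c^{*}$ comes instead from the fact that the estimate of Lemma~\ref{lemimportant}, and hence the whole chain in Lemma~\ref{Linking2}, is uniform in $m$ (the right-hand side $\iint\xi^{1-2s}|\nabla\varphi_{\e}|^{2}$ does not see $m$). Second, the step you call ``routine'' --- passing to the limit in $\langle\mathcal{J}_{m}'(v_{m}),\varphi\rangle=0$ --- needs a little care: a smooth $2\pi$-periodic $\varphi$ need not lie in $\X$ (constants in $\xi$ are not in $L^{2}(\mathcal{S}_{2\pi},\xi^{1-2s})$), so the paper first tests with $\varphi\psi_{R}(\xi)$ for a vertical cutoff $\psi_{R}$ and then sends $R\to\infty$.
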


\noindent
The paper is organized as follows. In Section $2$ we collect some preliminaries which we will use to study the problem $(\ref{P})$. In Section $3$ we recall that the problem (\ref{P}) can be realized in a local manner through the nonlinear problem (\ref{R}). 
In Section $4$ we show that, for any fixed $m>0$, the functional $\mathcal{J}_{m}$ satisfies the Linking hypotheses. In Section $5$ we study the regularity of solutions of problem $(\ref{P})$. In the last section, we prove that we can find a nontrivial solution to $(\ref{P'})$, by taking the limit as $m\rightarrow 0$ in $(\ref{R})$.

\section{Preliminaries}

\noindent
In this section we introduce some notations and facts which will be frequently used in the sequel of paper.
For more details we refer the reader to \cite{A2, A3}.\\
We denote the upper half-space in $\R^{N+1}$ by 
$$
\R^{N+1}_{+}=\{(x,\xi)\in \R^{N+1}: x\in \R^{N}, \xi>0 \}.
$$
Let $\mathcal{S}_{2\pi}=(-\pi,\pi)^{N}\times(0,\infty)$ be the half-cylinder in $\R^{N+1}_{+}$ with basis $\partial^{0}\mathcal{S}_{2\pi}=(-\pi,\pi)^{N}\times \{0\}$
and we denote by $\partial_{L}\mathcal{S}_{2\pi}=\partial (-\pi,\pi)^{N}\times [0,+\infty)$ its lateral boundary.

With $|u|_{r}$ we always denote the $L^{r}(-\pi,\pi)^{N}$-norm of $u \in L^{r}(-\pi,\pi)^{N}$, and we set 
$$
|u|_{r, W}^{r}=\int_{(-\pi, \pi)^{N}} W(x) |u(x)|^{r} dx.
$$

Let $s\in (0,1)$ and $m> 0$. Let $A\subset \R^{N}$ be a domain. We denote by $L^{2}(A\times \R_{+},\xi^{1-2s})$
the space of all measurable functions $v$ defined on $A\times \R_{+}$ such that
$$
\iint_{A\times \R_{+}} \xi^{1-2s} v^{2} dx\, d\xi<\infty.
$$
We say that  $v\in H^{1}_{m}(A\times \R_{+},\xi^{1-2s})$ if $v$ and its weak gradient $\nabla v$ belong to $L^{2}(A\times \R_{+},\xi^{1-2s})$.
The norm of $v$ in $H^{1}_{m}(A\times \R_{+},\xi^{1-2s})$ is given by
$$
\iint_{A\times \R_{+}} \xi^{1-2s} (|\nabla v|^{2}+m^{2}v^{2}) \,dx\,d\xi<\infty.
$$
It is clear that $H^{1}_{m}(A\times \R_{+},\xi^{1-2s})$ is a Hilbert space with the inner product
$$
\iint_{A\times \R_{+}} \xi^{1-2s} (\nabla v \nabla z+m^{2}v z) \,dx\,d\xi.
$$
When $m=1$, we set  $H^{1}(A\times \R_{+},\xi^{1-2s})=H^{1}_{1}(A\times \R_{+},\xi^{1-2s})$.

We denote by $\mathcal{C}^{\infty}_{2\pi}(\R^{N})$ the space of functions 
$u\in \mathcal{C}^{\infty}(\R^{N})$ such that $u$ is $2\pi$-periodic in each variable, that is
$$
u(x+2\pi e_{i})=u(x) \mbox{ for all } x\in \R^{N}, i=1, \dots, N. 
$$
Let $u\in  \mathcal{C}^{\infty}_{2\pi}(\R^{N})$. Then we know that 
$$
u(x)=\sum_{k\in \mathbb{Z}^{N}} c_{k} \frac{e^{\imath  k\cdot x}}{(2\pi)^{\frac{N}{2}}} \quad \mbox{ for all } x\in \R^{N},
$$
where 
$$
c_{k}=\frac{1}{(2\pi)^{\frac{N}{2}}} \int_{(-\pi,\pi)^{N}} u(x)e^{-\imath k \cdot x}dx \quad (k\in \mathbb{Z}^{N})
$$ 
are the Fourier coefficients of $u$. We define the fractional Sobolev space $\h$ as the closure of 
$\mathcal{C}^{\infty}_{2\pi}(\R^{N})$ under the norm 
\begin{equation*}\label{h12norm}
|u|^{2}_{\h}:= \sum_{k\in \mathbb{Z}^{N}} (|k|^{2}+m^{2})^{s} \, |c_{k}|^{2}. 
\end{equation*}
When $m=1$, we set $\mathbb{H}^{\rm s}=\mathbb{H}^{\rm s}_{1}$ and $|\cdot |_{\mathbb{H}^{\rm s}}=|\cdot|_{\mathbb{H}^{\rm s}_{1}}$.
We also use the notation $\dot{\mathbb{H}}^{s}$ to denote the closure of $\mathcal{C}^{\infty}_{2\pi}(\R^{N})$ with respect to the following Gagliardo semi-norm 
$$
[u]=\sqrt{\sum_{k\in \mathbb{Z}^{N}} |k|^{2s} |c_{k}|^{2}}.
$$

Now, let us introduce the space of periodic function with respect to the $x$-component, that is 
\begin{align*}
\mathcal{C}_{2\pi}^{\infty}(\overline{\R^{N+1}_{+}})=\Bigl\{ & v\in \mathcal{C}^{\infty}(\overline{\R^{N+1}_{+}}): v(x+2\pi e_{i},y)=v(x,y) \\
&\mbox{ for every } (x,y)\in \overline{\R_{+}^{N+1}}, i=1, \dots, N \Bigr\}.
\end{align*}
We denote by $\dot{\mathbb{X}}^{s}$ the completion of $\mathcal{C}_{2\pi}^{\infty}(\overline{\R^{N+1}_{+}})$ with respect to the norm
$$
\|\nabla v\|^{2}_{L^{2}(\mathcal{S}_{2\pi}, \xi^{1-2s})}=\iint_{\mathcal{S}_{2\pi}} \xi^{1-2s}|\nabla v| dx \,d\xi.
$$
Finally, we define the functional space $\X$ as the completion of $\mathcal{C}_{2\pi}^{\infty}(\overline{\R^{N+1}_{+}})$ under the $H^{1}_{\rm m}(\mathcal{S}_{2\pi},\xi^{1-2s})$ norm 
\begin{equation*}
\|v\|^{2}_{\X}:=\iint_{\mathcal{S}_{2\pi}} \xi^{1-2s} (|\nabla v|^{2}+m^{2}v^{2}) \, dx\,d\xi.
\end{equation*} 

We recall that it is possible to define a trace operator from $\X$ to $\h$.
\begin{thm}\cite{A2, A3}\label{tracethm}
There exists a bounded linear operator $\textup{Tr} : \X \rightarrow \h$  such that :
\begin{itemize}
\item[(i)] $\textup{Tr}(v)=v|_{\partial^{0} \mathcal{S}_{2\pi}}$ for all $v\in \mathcal{C}_{2\pi}^{\infty}(\overline{\R^{N+1}_{+}}) \cap \X$;
\item[(ii)] It holds 
$$
\sqrt{\kappa_{s}}  |\textup{Tr}(v)|_{\mathbb{H}^{s}_{m}}\leq \|v\|_{\X} \mbox{ for every } v\in \X; 
$$ 
\item[(iii)] $\textup{Tr}$ is surjective.
\end{itemize}
\end{thm}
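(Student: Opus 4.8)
The plan is to diagonalise the problem in the Fourier variable $x$, so that the statement reduces to a one–dimensional weighted trace inequality with the sharp constant $\kappa_{s}$. First I would fix, for each $a>0$, the unique bounded solution $\theta_{a}\in\mathcal{C}^{\infty}((0,\infty))$ of
\[
-(\xi^{1-2s}\theta_{a}')'+a\,\xi^{1-2s}\theta_{a}=0 \ \text{ in }(0,\infty),\qquad \theta_{a}(0)=1,\qquad \lim_{\xi\to\infty}\theta_{a}(\xi)=0,
\]
which after the rescaling $\xi\mapsto\sqrt{a}\,\xi$ is a multiple of the Caffarelli--Silvestre profile $\xi\mapsto\xi^{s}K_{s}(\xi)$; as in \cite{CafSil} it satisfies
\[
-\lim_{\xi\to0}\xi^{1-2s}\theta_{a}'(\xi)=\kappa_{s}\,a^{s}
\qquad\text{and}\qquad
\int_{0}^{\infty}\xi^{1-2s}\big(|\theta_{a}'|^{2}+a\,|\theta_{a}|^{2}\big)\,d\xi=\kappa_{s}\,a^{s}.
\]
Then I would establish the pointwise trace bound $\kappa_{s}\,a^{s}\,|\varphi(0)|^{2}\le \int_{0}^{\infty}\xi^{1-2s}(|\varphi'|^{2}+a\,|\varphi|^{2})\,d\xi$, valid for any smooth $\varphi$ on $[0,\infty)$ with finite right–hand side, by decomposing $\varphi=\varphi(0)\,\theta_{a}+w$ with $w(0)=0$ and integrating by parts against the equation for $\theta_{a}$: the cross term becomes a boundary term carrying the factor $w(0)=0$ at $\xi=0$ and vanishes at infinity by decay, so the weighted energy of $\varphi$ splits as $\kappa_{s}a^{s}|\varphi(0)|^{2}$ plus the nonnegative energy of $w$.

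\textbf{From one to $N+1$ dimensions.} Next I would take $v\in\mathcal{C}^{\infty}_{2\pi}(\overline{\R^{N+1}_{+}})$ with $\|v\|_{\X}<\infty$ and expand $v(x,\xi)=\sum_{k\in\mathbb{Z}^{N}}v_{k}(\xi)\,(2\pi)^{-N/2}e^{\imath k\cdot x}$ in Fourier series in $x$. By Parseval's identity,
\[
\|v\|_{\X}^{2}=\sum_{k\in\mathbb{Z}^{N}}\int_{0}^{\infty}\xi^{1-2s}\Big(|v_{k}'(\xi)|^{2}+(|k|^{2}+m^{2})\,|v_{k}(\xi)|^{2}\Big)\,d\xi,
\]
while the restriction $v(\cdot,0)$ has Fourier coefficients $c_{k}=v_{k}(0)$, so that $|v(\cdot,0)|_{\h}^{2}=\sum_{k}(|k|^{2}+m^{2})^{s}|v_{k}(0)|^{2}$. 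Applying the one–dimensional inequality with $a=|k|^{2}+m^{2}$ to each $v_{k}$ and summing over $k$ yields $\kappa_{s}\,|v(\cdot,0)|_{\h}^{2}\le\|v\|_{\X}^{2}$; in particular $v(\cdot,0)\in\h$.

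\textbf{Definition of $\T$ and the three properties.} Since $\X$ is by definition the completion of $\mathcal{C}^{\infty}_{2\pi}(\overline{\R^{N+1}_{+}})$ in the norm $\|\cdot\|_{\X}$, the smooth $2\pi$-periodic functions with finite norm are dense in $\X$; the linear assignment $v\mapsto v(\cdot,0)$ is bounded into $\h$ on this dense subspace by the previous step, hence extends uniquely to a bounded operator $\T:\X\to\h$. Property (i) then holds by construction and property (ii) follows by passing to the limit along smooth approximations. For surjectivity (iii), given $u=\sum_{k}c_{k}(2\pi)^{-N/2}e^{\imath k\cdot x}\in\h$ I would set $v(x,\xi)=\sum_{k}c_{k}\,\theta_{|k|^{2}+m^{2}}(\xi)\,(2\pi)^{-N/2}e^{\imath k\cdot x}$; truncating the series gives trigonometric polynomials in $x$ with smooth, rapidly decaying $\xi$-profiles, lying in $\mathcal{C}^{\infty}_{2\pi}(\overline{\R^{N+1}_{+}})$, and by the energy identity for $\theta_{a}$ these truncations form a Cauchy sequence in $\X$ with $\|v\|_{\X}^{2}=\kappa_{s}\sum_{k}(|k|^{2}+m^{2})^{s}|c_{k}|^{2}=\kappa_{s}|u|_{\h}^{2}<\infty$. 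Hence $v\in\X$, and since $\theta_{a}(0)=1$ for all $a$ one gets $\T(v)=u$, so $\T$ is onto, in fact with a bounded right inverse.

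\textbf{Main obstacle.} The only delicate point is pinning down the exact constant $\kappa_{s}$ in the one–dimensional trace inequality: this forces one to identify the extremal profile $\theta_{a}$ explicitly (the modified Bessel function $K_{s}$, up to normalization) and to compute its weighted energy, which is where the precise value $\kappa_{s}=2^{1-2s}\Gamma(1-s)/\Gamma(s)$ of Caffarelli--Silvestre enters. Once this is available, everything else is routine Fourier analysis combined with the density of $\mathcal{C}^{\infty}_{2\pi}(\overline{\R^{N+1}_{+}})$ in $\X$.
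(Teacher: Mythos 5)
Your argument is correct and coincides with the proof in the cited sources \cite{A2, A3} (the paper itself only recalls this theorem): there, too, one diagonalises in the Fourier variable, reduces to the one-dimensional weighted problem solved by the Bessel profile $\theta(\sqrt{|k|^{2}+m^{2}}\,\xi)$ with $\theta(\xi)=\frac{2}{\Gamma(s)}(\xi/2)^{s}K_{s}(\xi)$, uses the mode-by-mode energy identity $\int_{0}^{\infty}\xi^{1-2s}(|\theta_{a}'|^{2}+a\theta_{a}^{2})\,d\xi=\kappa_{s}a^{s}$ to get the sharp trace inequality, and obtains surjectivity via the explicit extension, exactly as reflected in properties $(E1)$--$(E4)$ and Theorem \ref{thm6} of this paper. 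No changes needed.
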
 

We also recall the following fundamental embeddings.
\begin{thm}\cite{A2, A3}\label{thm2}
Let $N> 2s$. Then  $\textup{Tr}(\mathbb{X}^{s}_{m})$ is continuously embedded in $L^{q}(-\pi,\pi)^{N}$ for any  $1\leq q \leq 2^{*}_{s}$.  Moreover,  $\textup{Tr}(\mathbb{X}^{s}_{m})$ is compactly embedded in $L^{q}(-\pi,\pi)^{N}$  for any  $1\leq q < 2^{*}_{s}$. 
\end{thm}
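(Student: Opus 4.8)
The plan is to reduce everything to the classical fractional Sobolev embeddings on $\R^{N}$, exploiting the trace inequality already available. Set $Q:=(-\pi,\pi)^{N}$. By part (ii) of Theorem \ref{tracethm}, $\sqrt{\kappa_{s}}\,|\textup{Tr}(v)|_{\h}\le\|v\|_{\X}$ for every $v\in\X$, so the continuous embedding will follow once we show $|u|_{L^{q}(Q)}\le C\,|u|_{\h}$ for $u\in\h$ and $1\le q\le 2^{*}_{s}$; likewise, if $(v_{n})\subset\X$ is bounded, then $(\textup{Tr}(v_{n}))$ is bounded in $\h$ by the same inequality, so the compact embedding will follow from the compact embedding $\h\hookrightarrow L^{q}(Q)$, $1\le q<2^{*}_{s}$. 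Since $m>0$ is fixed, the weights $(|k|^{2}+m^{2})^{s}$ and $1+|k|^{2s}$ are comparable uniformly in $k\in\mathbb{Z}^{N}$, with constants depending only on $m$ and $s$; hence $|\cdot|_{\h}$ is equivalent to $\bigl(|\cdot|_{2}^{2}+[\,\cdot\,]^{2}\bigr)^{1/2}$, i.e.\ $\h$ is, up to renorming, the fractional Sobolev space $H^{s}$ on the torus $\R^{N}/2\pi\mathbb{Z}^{N}$, and it suffices to work with this space.

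To bring the problem into $\R^{N}$, fix $\chi\in\mathcal{C}^{\infty}_{c}(\R^{N})$ with $\chi\equiv 1$ on $[-\pi,\pi]^{N}$ and $\supp\chi\subset(-2\pi,2\pi)^{N}$, and for $u\in\mathcal{C}^{\infty}_{2\pi}(\R^{N})$ put $w:=\chi u\in\mathcal{C}^{\infty}_{c}(\R^{N})$, where here $u$ stands for its $2\pi$-periodic extension to $\R^{N}$. From $\widehat{w}(\xi)=c\sum_{k\in\mathbb{Z}^{N}}c_{k}\,\widehat{\chi}(\xi-k)$, the rapid decay of $\widehat{\chi}$, Cauchy--Schwarz, and the elementary bound $(1+|\xi|^{2})^{s}\le C\,(1+|k|^{2})^{s}(1+|\xi-k|^{2})^{s}$, one obtains $\|w\|_{H^{s}(\R^{N})}^{2}\le C\sum_{k}(1+|k|^{2})^{s}|c_{k}|^{2}\le C'|u|_{\h}^{2}$, with $C'$ independent of $u$. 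By density this bound passes to every $u\in\h$: if $u_{n}\to u$ in $\h$ with $u_{n}$ smooth and $2\pi$-periodic, then $(\chi u_{n})$ is Cauchy in $H^{s}(\R^{N})$, its limit $w$ equals $u$ a.e.\ on $Q$, and $\|w\|_{H^{s}(\R^{N})}\le C'|u|_{\h}$.

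Now the statement follows from standard results on $\R^{N}$. Since $N>2s$, the fractional Sobolev inequality gives $|u|_{L^{2^{*}_{s}}(Q)}=|w|_{L^{2^{*}_{s}}(Q)}\le|w|_{L^{2^{*}_{s}}(\R^{N})}\le C\|w\|_{H^{s}(\R^{N})}\le C'|u|_{\h}$; combined with $\sqrt{\kappa_{s}}\,|\textup{Tr}(v)|_{\h}\le\|v\|_{\X}$, this yields $\textup{Tr}(\X)\hookrightarrow L^{2^{*}_{s}}(Q)$, hence $\textup{Tr}(\X)\hookrightarrow L^{q}(Q)$ for all $1\le q\le 2^{*}_{s}$ because $|Q|<\infty$. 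For compactness, let $(v_{n})$ be bounded in $\X$, set $u_{n}:=\textup{Tr}(v_{n})$, and let $w_{n}:=\chi\widetilde{u_{n}}$ with $\widetilde{u_{n}}$ the periodic extension of $u_{n}$. By the previous paragraph $(w_{n})$ is bounded in $H^{s}(\R^{N})$ with supports contained in the fixed compact set $\overline{(-2\pi,2\pi)^{N}}$; hence, by the Rellich--Kondrachov theorem for the fractional Sobolev space $H^{s}$ on a bounded Lipschitz domain (see e.g.\ \cite{DPV}), along a subsequence $w_{n}\to w$ in $L^{q}(\R^{N})$ for every $q<2^{*}_{s}$, and restricting to $Q$ gives $\textup{Tr}(v_{n_{j}})\to w|_{Q}$ in $L^{q}(Q)$.

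The only genuinely non-formal point is the inequality $\|\chi u\|_{H^{s}(\R^{N})}\le C|u|_{\h}$, that is, controlling the $H^{s}(\R^{N})$-norm of the localized periodic function by the discrete norm $|u|_{\h}$; once this is in place, everything else is a routine consequence of the trace inequality of Theorem \ref{tracethm} together with the classical fractional Sobolev embedding theorems. As an alternative to the cut-off for this step, one may use the periodized Riesz kernel of $(m^{2}-\Delta)^{-s/2}$, which near the lattice $2\pi\mathbb{Z}^{N}$ behaves like $\mathrm{dist}(\cdot,2\pi\mathbb{Z}^{N})^{-(N-s)}$; then $\h\hookrightarrow L^{2^{*}_{s}}(Q)$ comes from the Hardy--Littlewood--Sobolev inequality on the torus, and the compact embeddings follow by interpolating with the boundedness in $L^{2^{*}_{s}}$. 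I would nonetheless prefer the cut-off argument, since it keeps all the analysis on $\R^{N}$, where the needed embedding and compactness results are completely standard.
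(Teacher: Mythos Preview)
The paper does not supply a proof of this statement: Theorem~\ref{thm2} is quoted from the earlier works \cite{A2,A3} and is used here as a black box. Consequently there is no ``paper's own proof'' to compare against, and the relevant question is simply whether your argument is sound.

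Your reduction is correct. Via the trace inequality in Theorem~\ref{tracethm} the problem becomes the embedding $\h\hookrightarrow L^{q}(Q)$, and since the weights $(|k|^{2}+m^{2})^{s}$ and $1+|k|^{2s}$ are comparable for fixed $m>0$, this is the usual fractional Sobolev embedding on the torus. Your cut-off transfer to $\R^{N}$ is a standard and efficient way to obtain it: the Fourier computation $\widehat{\chi u}(\xi)=c\sum_{k}c_{k}\,\widehat{\chi}(\xi-k)$ together with Peetre's inequality $(1+|\xi|^{2})^{s}\le C(1+|k|^{2})^{s}(1+|\xi-k|^{2})^{s}$ and the rapid decay of $\widehat{\chi}$ does yield $\|\chi u\|_{H^{s}(\R^{N})}\le C|u|_{\h}$; note that the resulting $\xi$-integral converges precisely because $N>2s$, which is the hypothesis. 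The continuous embedding then follows from the fractional Sobolev inequality on $\R^{N}$, and the compact embedding from Rellich--Kondrachov for $H^{s}$ on the fixed bounded set $\supp\chi$ (as in \cite{DPV}), exactly as you wrote.

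Two small remarks. First, in the density step you should state explicitly that the map $u\mapsto \chi\,\tilde u$ is continuous from $L^{2}(Q)$ to $L^{2}(\R^{N})$, so the limit of $\chi u_{n}$ is unambiguously $\chi\,\tilde u$; this is trivial but makes the identification $w|_{Q}=u$ clean. Second, your alternative via the periodized Bessel kernel and Hardy--Littlewood--Sobolev on the torus is essentially the route taken in \cite{benyi}, which the present paper invokes later (see \eqref{PSI} and the surrounding argument). Either approach is fine; your cut-off argument has the advantage of giving compactness for free from the $\R^{N}$ theory.
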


\section{Extension problem}

\noindent
In this section we show that the study $(\ref{P})$ is equivalent to investigate the solutions of a problem in a half-cylinder with a nonlinear Neumann boundary condition.\\
More precisely, the following result holds.

\begin{thm}\cite{A2, A3}
Let $u\in \mathbb{H}^{s}_{m}$. Then there exists a unique $v\in \mathbb{X}^{s}_{m}$ such that
\begin{equation}\label{extPu}
\left\{
\begin{array}{ll}
-\dive(\xi^{1-2s} \nabla v)+m^{2}\xi^{1-2s}v =0 &\mbox{ in }\mathcal{S}_{2\pi}  \\
v_{| {\{x_{i}=0\}}}= v_{| {\{x_{i}=2\pi\}}} & \mbox{ on } \partial_{L}\mathcal{S}_{2\pi} \\
v(\cdot,0)=u  &\mbox{ on } \partial^{0}\mathcal{S}_{2\pi}
\end{array}
\right.
\end{equation}
and
\begin{align}\label{conormal}
-\lim_{\xi \rightarrow 0} \xi^{1-2s}\frac{\partial v}{\partial \xi}(x,\xi)=\kappa_{s} (-\Delta_{x}+m^{2})^{s}u(x) \mbox{ in } \mathbb{H}^{-s}_{m},
\end{align}
where 
$$
\mathbb{H}^{-s}_{m}=\left \{u=\sum_{k\in \mathbb{Z}^{N}} c_{k} \frac{e^{\imath k\cdot x}}{(2\pi)^{\frac{N}{2}}}: \sum_{k\in \mathbb{Z}^{N}}\frac{|c_{k}|^{2}}{(|k|^{2}+m^{2})^{s}}< \infty \right\}
$$ 
is the dual of $\mathbb{H}^{s}_{m}$.
\end{thm}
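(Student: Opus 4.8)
The plan is to diagonalise the extension system in the Fourier basis and solve it mode by mode, producing explicit vertical profiles built from modified Bessel functions; existence and membership in $\mathbb{X}^{s}_{m}$ then come from summing the resulting series (equivalently, from a variational projection, which also gives uniqueness), and the conormal identity $(\ref{conormal})$ is read off from the behaviour of the Bessel profiles near $\xi=0$. Concretely, writing $u=\sum_{k\in\mathbb{Z}^{N}}c_{k}(2\pi)^{-N/2}e^{\imath k\cdot x}$, one looks for $v$ of the form $v(x,\xi)=\sum_{k\in\mathbb{Z}^{N}}c_{k}\,\varphi_{k}(\xi)\,(2\pi)^{-N/2}e^{\imath k\cdot x}$. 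Substituting into $-\dive(\xi^{1-2s}\nabla v)+m^{2}\xi^{1-2s}v=0$ forces each profile to solve
\begin{equation*}
\varphi_{k}''+\frac{1-2s}{\xi}\,\varphi_{k}'-(|k|^{2}+m^{2})\,\varphi_{k}=0 \ \text{ on } (0,\infty), \qquad \varphi_{k}(0)=1,
\end{equation*}
together with the integrability requirement that $\xi^{1-2s}(|\varphi_{k}'|^{2}+|\varphi_{k}|^{2})\in L^{1}(0,\infty)$. Since $m>0$ we have $\theta_{k}:=\sqrt{|k|^{2}+m^{2}}\geq m>0$ for every $k$, so the equation is non-degenerate for all modes; its unique decaying solution normalised at $0$ is $\varphi_{k}(\xi)=\frac{2^{1-s}}{\Gamma(s)}(\theta_{k}\xi)^{s}K_{s}(\theta_{k}\xi)$, where $K_{s}$ is the modified Bessel function of the second kind.

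For existence and membership in $\mathbb{X}^{s}_{m}$, integrate the ODE against $\varphi_{k}$: using the exponential decay at $+\infty$ and the normalisation at $0$ one gets the energy identity $\int_{0}^{\infty}\xi^{1-2s}(|\varphi_{k}'|^{2}+\theta_{k}^{2}\varphi_{k}^{2})\,d\xi=-\lim_{\xi\to 0}\xi^{1-2s}\varphi_{k}'(\xi)$, which by the near-origin expansion of $K_{s}$ equals $\kappa_{s}\theta_{k}^{2s}$. By Parseval it follows that $\|v\|_{\mathbb{X}^{s}_{m}}^{2}=\kappa_{s}\sum_{k}\theta_{k}^{2s}|c_{k}|^{2}=\kappa_{s}\,|u|_{\mathbb{H}^{s}_{m}}^{2}<\infty$, so $v\in\mathbb{X}^{s}_{m}$, it has trace $u$, and it satisfies $(\ref{extPu})$ in the weak sense. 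For uniqueness I would argue variationally: since $\textup{Tr}:\mathbb{X}^{s}_{m}\to\mathbb{H}^{s}_{m}$ is a bounded surjection (Theorem \ref{tracethm}), the affine set $\{w\in\mathbb{X}^{s}_{m}:\textup{Tr}(w)=u\}$ is nonempty, closed and convex, hence carries a unique element $v$ of minimal $\mathbb{X}^{s}_{m}$-norm, characterised by $\langle v,\psi\rangle_{\mathbb{X}^{s}_{m}}=0$ for all $\psi\in\mathbb{X}^{s}_{m}$ with $\textup{Tr}(\psi)=0$ — exactly the weak formulation of $(\ref{extPu})$; the Fourier series above is this minimiser, as any competitor splits orthogonally as $v+(w-v)$.

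For the conormal derivative, use the classical expansion $K_{s}(r)=\tfrac12\Gamma(s)(r/2)^{-s}-\tfrac{\Gamma(1-s)}{2s}(r/2)^{s}+o(r^{s})$ as $r\to 0^{+}$, which gives $\varphi_{k}(\xi)=1-\frac{\Gamma(1-s)}{s\Gamma(s)}\,2^{-2s}\,\theta_{k}^{2s}\,\xi^{2s}+o(\xi^{2s})$, hence $-\lim_{\xi\to 0}\xi^{1-2s}\varphi_{k}'(\xi)=2^{1-2s}\frac{\Gamma(1-s)}{\Gamma(s)}\theta_{k}^{2s}=\kappa_{s}(|k|^{2}+m^{2})^{s}$. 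Summing over $k$ and recalling the spectral definition $(\ref{nfrls})$, one obtains $-\lim_{\xi\to 0}\xi^{1-2s}v_{\xi}(\cdot,\xi)=\kappa_{s}(-\Delta_{x}+m^{2})^{s}u$ as an element of $\mathbb{H}^{-s}_{m}$, which is precisely $(\ref{conormal})$.

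The main obstacle is the functional-analytic bookkeeping rather than the algebra. One must verify that the formal Fourier series genuinely converges in $\mathbb{X}^{s}_{m}$ and in $C(\overline{\mathcal{S}_{2\pi}}\setminus\partial^{0}\mathcal{S}_{2\pi})$ — this uses uniform bounds on $\varphi_{k}$ and $\xi^{1-2s}\varphi_{k}'$ coming from the monotone decay of the Bessel kernel — that the trace of the sum is the sum of the traces, and, most delicately, that the limit $\xi\to0$ of $\xi^{1-2s}v_{\xi}$ may be taken termwise and interpreted in the dual space $\mathbb{H}^{-s}_{m}$ and not pointwise. The cleanest way to handle the last point is to pair $\xi^{1-2s}v_{\xi}(\cdot,\xi)$ against an arbitrary test function $\phi=\sum_{k}d_{k}(2\pi)^{-N/2}e^{\imath k\cdot x}\in\mathbb{H}^{s}_{m}$ and apply dominated convergence to the resulting numerical series, the dominating sequence being $C\theta_{k}^{2s}|c_{k}||d_{k}|$ which is summable by Cauchy--Schwarz in $\mathbb{H}^{s}_{m}\times\mathbb{H}^{-s}_{m}$.
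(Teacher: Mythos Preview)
Your proposal is correct and follows the standard Fourier-diagonalisation approach for extension problems of this type, which is precisely the method used in the cited references \cite{A2, A3}. Note that the present paper does not actually prove this theorem: it is stated with a citation and no argument is given here, so there is no in-paper proof to compare against. Your mode-by-mode Bessel profile $\varphi_{k}(\xi)=\frac{2^{1-s}}{\Gamma(s)}(\theta_{k}\xi)^{s}K_{s}(\theta_{k}\xi)$ is exactly $\theta(\theta_{k}\xi)$ in the paper's notation (cf.\ Theorem~\ref{thm6}), and your energy identity recovers the trace constant $\kappa_{s}$ and the isometry $\|v\|_{\mathbb{X}^{s}_{m}}^{2}=\kappa_{s}|u|_{\mathbb{H}^{s}_{m}}^{2}$, which the paper records as property~$(E3)$ immediately after the statement.
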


\noindent
Hence, for any given $u\in \mathbb{H}^{s}_{m}$ we can find a unique function $v=\textup{Ext}(u)\in \X$, which will be called the periodic extension of $u$, such that
\begin{compactenum}[$(E1)$]
\item $v$ is smooth for $y>0$, $2\pi$-periodic in $x$ and $v$ solves (\ref{extPu});
\item $\|v\|_{\X}\leq \|z\|_{\mathbb{X}^{s}_{m}}$ for any $z\in \X$ such that $\textup{Tr}(z)=u$;
\item $\|v\|_{\X}=\sqrt{\kappa_{s}} |u|_{\h}$;
\item We have
$$
-\lim_{\xi\rightarrow 0}\xi^{1-2s} \frac{\partial v}{\partial \xi}(x,\xi)= \kappa_{s} (-\Delta_{x}+m^{2})^{s}u(x) \mbox{ in } \mathbb{H}^{-s}_{m}.
$$
\end{compactenum}


Taking into account the previous results, we can reformulate nonlocal periodic problems in a local way. 

Let $g \in \mathbb{H}^{-s}_{m}$ and consider the following two problems:
\begin{equation}\label{P*}
\left\{
\begin{array}{ll}
(-\Delta_{x}+m^{2})^{s}u=g &\mbox{ in } (-\pi,\pi)^{N}  \\
u(x+2\pi e_{i})=u(x) & \mbox{ for } x\in \R^{N}
\end{array}
\right.
\end{equation}
and
\begin{equation}\label{P**}
\left\{
\begin{array}{ll}
-\dive(\xi^{1-2s} \nabla v)+m^{2}\xi^{1-2s}v =0 &\mbox{ in }\mathcal{S}_{2\pi}  \\
v_{| {\{x_{i}=0\}}}= v_{| {\{x_{i}=T\}}} & \mbox{ on } \partial_{L}\mathcal{S}_{2\pi} \\
\frac{\partial v}{\partial \nu^{1-2s}}=\kappa_{s} g(x)  &\mbox{ on } \partial^{0}\mathcal{S}_{2\pi}.
\end{array}
\right.
\end{equation}

Then, we can define the concept of solution to the nonlocal problem $(\ref{P*})$ in terms of solutions to $(\ref{P**})$ as explained below.

\begin{defn}
We say that $v\in \X$ is a solution to $(\ref{P**})$, if for any $\phi\in \X$ it holds
$$
\iint_{\mathcal{S}_{2\pi}} \xi^{1-2s} (\nabla v \nabla \phi + m^{2} v \phi ) \, dx\,d\xi=\kappa_{s}\langle g, \textup{Tr}(\phi)\rangle_{\mathbb{H}^{-s}_{m}, \h}, 
$$
where $\langle \cdot, \cdot\rangle_{\mathbb{H}^{-s}_{m}, \h}$ is the duality pairing between $\h$ and $\mathbb{H}^{-s}_{m}$.  
\end{defn}
\begin{defn}
We say that $u\in \h$ is a weak solution to $(\ref{P*})$ if $u=\textup{Tr}(v)$
and $v$ is a weak solution to $(\ref{P**})$.
\end{defn}


Finally, we recall the following useful result:
\begin{thm}\cite{A3}\label{thm6}
\begin{equation}\label{eqY}
\|v\|_{\X}^{2} - \kappa_{s} m^{2s} |\textup{Tr}(v)|_{2}^{2} =0 \Leftrightarrow v(x,y)=C\, \theta(my) \mbox{ for some } C\in \R. 
\end{equation}
Here $\theta(\xi)=\frac{2}{\Gamma(s)} (\xi/2)^{s} K_{s}(\xi)$, and $K_{s}$ is the modified Bessel function of the second type with order $s$; see \cite{Erd}.
\end{thm}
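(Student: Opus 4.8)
The plan is to read the identity as the equality case in the two inequalities underlying the trace theory for $\X$. Let $u=\textup{Tr}(v)$ and let $(c_{k})_{k\in\mathbb{Z}^{N}}$ be its Fourier coefficients. Combining Theorem \ref{tracethm}(ii) with the definition of $|\cdot|_{\h}$ one has, for every $v\in\X$,
$$
\|v\|_{\X}^{2}\geq\kappa_{s}|u|_{\h}^{2}=\kappa_{s}\sum_{k\in\mathbb{Z}^{N}}(|k|^{2}+m^{2})^{s}|c_{k}|^{2}\geq\kappa_{s}m^{2s}\sum_{k\in\mathbb{Z}^{N}}|c_{k}|^{2}=\kappa_{s}m^{2s}|u|_{2}^{2},
$$
so that $\|v\|_{\X}^{2}-\kappa_{s}m^{2s}|u|_{2}^{2}\geq 0$ always, and it vanishes precisely when both of the above inequalities are equalities. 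The two implications then follow by unwinding these equalities.

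For the direct implication, equality in the right-hand inequality forces $\sum_{k\neq 0}\big((|k|^{2}+m^{2})^{s}-m^{2s}\big)|c_{k}|^{2}=0$; since $|k|^{2}\geq 1$ for $k\in\mathbb{Z}^{N}\setminus\{0\}$ and $t\mapsto t^{s}$ is strictly increasing on $(0,\infty)$, every bracket is strictly positive, whence $c_{k}=0$ for $k\neq 0$ and $u\equiv C$ for some $C\in\R$. Equality in the left-hand inequality, together with the extremal properties $(E2)$ and $(E3)$, says that $v$ attains the minimum of $\|\cdot\|_{\X}$ over the closed affine subspace $\{z\in\X:\textup{Tr}(z)=u\}$ of the Hilbert space $\X$; such a minimizer is unique, hence $v=\textup{Ext}(C)=C\,\textup{Ext}(1)$. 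It remains to compute $\textup{Ext}(1)$: by uniqueness in (\ref{extPu}) and the $x$-independence of the boundary datum, $\textup{Ext}(1)$ does not depend on $x$, say $\textup{Ext}(1)(x,\xi)=\phi(\xi)$, where $\phi$ solves $(\xi^{1-2s}\phi')'=m^{2}\xi^{1-2s}\phi$ on $(0,\infty)$, i.e. $\phi''+\frac{1-2s}{\xi}\phi'-m^{2}\phi=0$, with $\phi(0)=1$ and $\int_{0}^{\infty}\xi^{1-2s}((\phi')^{2}+\phi^{2})\,d\xi<\infty$. Rescaling to $t=m\xi$ and substituting $\phi=t^{s}g(t)$ transforms the equation into the modified Bessel equation $t^{2}g''+tg'-(t^{2}+s^{2})g=0$; the weighted integrability excludes the $I_{s}$-branch, so $g$ is a constant multiple of $K_{s}$, and the normalization $\phi(0)=1$ is fixed using $K_{s}(t)\sim\frac{1}{2}\Gamma(s)(t/2)^{-s}$ as $t\to 0^{+}$. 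This gives exactly $\phi(\xi)=\theta(m\xi)$, hence $v(x,\xi)=C\,\theta(m\xi)$.

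For the converse, observe first that $1\in\h$ has $c_{0}=(2\pi)^{N/2}$ as its only nonzero Fourier coefficient, so that $|1|_{\h}^{2}=m^{2s}(2\pi)^{N}=m^{2s}|1|_{2}^{2}$. If $v=C\,\theta(m\xi)$, the Bessel computation above, read in reverse, shows that $v\in\X$, that $v$ solves (\ref{extPu}), and that $\textup{Tr}(v)=C\,\theta(0)=C$, so $v=\textup{Ext}(C)$; then $(E3)$ yields $\|v\|_{\X}^{2}=\kappa_{s}|C|^{2}|1|_{\h}^{2}=\kappa_{s}m^{2s}|C|^{2}(2\pi)^{N}$, which coincides with $\kappa_{s}m^{2s}|\textup{Tr}(v)|_{2}^{2}=\kappa_{s}m^{2s}|C|^{2}(2\pi)^{N}$.

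I expect the main obstacle to be the explicit identification of $\textup{Ext}(1)$ with the profile $\theta(m\cdot)$: one has to reduce the degenerate ODE carefully to the modified Bessel equation, single out the exponentially decaying solution by means of the weight $\xi^{1-2s}$, and pin down the multiplicative constant via the small-argument asymptotics of $K_{s}$ so that $\theta(0)=1$. A secondary, but routine, point is the uniqueness of the norm-minimizing extension, which is what promotes equality in the trace inequality to the pointwise identity $v=\textup{Ext}(\textup{Tr}(v))$; this rests only on the strict convexity of the Hilbert norm $\|\cdot\|_{\X}$.
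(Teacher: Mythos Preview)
The paper does not prove Theorem~\ref{thm6}; it is quoted from \cite{A3} without argument. So there is no ``paper's own proof'' to compare against, and the question is simply whether your argument stands on its own.

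It does. Your decomposition of the equality $\|v\|_{\X}^{2}=\kappa_{s}m^{2s}|\textup{Tr}(v)|_{2}^{2}$ into two separate equality cases---equality in the trace inequality $\|v\|_{\X}^{2}\geq\kappa_{s}|\textup{Tr}(v)|_{\h}^{2}$ and equality in the spectral inequality $|\textup{Tr}(v)|_{\h}^{2}\geq m^{2s}|\textup{Tr}(v)|_{2}^{2}$---is the natural strategy and is carried out correctly. The second equality forces the trace to be constant via the strict positivity of $(|k|^{2}+m^{2})^{s}-m^{2s}$ for $k\neq 0$; the first forces $v$ to be the unique norm-minimizing extension of that constant, by strict convexity in the Hilbert space $\X$ together with $(E2)$--$(E3)$. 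The explicit identification $\textup{Ext}(1)(x,\xi)=\theta(m\xi)$ through the substitution $\phi=t^{s}g(t)$, leading to the modified Bessel equation $t^{2}g''+tg'-(t^{2}+s^{2})g=0$, the rejection of the $I_{s}$-branch by the weighted-$L^{2}$ condition, and the normalization via $K_{s}(t)\sim\tfrac{1}{2}\Gamma(s)(t/2)^{-s}$, are all correct and are exactly the computations underlying the properties of $\theta$ that the paper uses elsewhere (e.g.\ in Lemma~\ref{lemimportant}). The converse is immediate from $(E3)$ once one knows $C\,\theta(m\xi)=\textup{Ext}(C)$.

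One small remark: your closing paragraph anticipates the ``main obstacle'' to be the Bessel identification, but in fact you have already handled it cleanly; there is no gap left to flag.
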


\section{Periodic solutions in the cylinder $\mathcal{S}_{2\pi}$}

\noindent
In this section we prove the existence of a solution to $(\ref{P})$. As shown in the previous section, we know that the study of $(\ref{P})$ is equivalent to investigate the existence of weak solutions to
\begin{equation}
\left\{
\begin{array}{ll}
-\dive(\xi^{1-2s} \nabla v)+m^{2}\xi^{1-2s}v =0 &\mbox{ in }\mathcal{S}_{2\pi}:=(-\pi,\pi)^{N} \times (0,\infty)  \\
v_{| {\{x_{i}=0\}}}= v_{| {\{x_{i}=2\pi\}}} & \mbox{ on } \partial_{L}\mathcal{S}_{2\pi}:=\partial (-\pi,\pi)^{N} \times [0,\infty) \\
\frac{\partial v}{\partial \nu^{1-2s}}=\kappa_{s} [m^{2s}v+W(x)|v|^{2^{*}_{s}-2}v+ f(x,v)]   &\mbox{ on } \partial^{0}\mathcal{S}_{2\pi}:=(-\pi,\pi)^{N} \times \{0\}. 
\end{array}
\right.
\end{equation}
For simplicity, let us assume $\kappa_{s}=1$.\\
Then, we will look for the critical points of
$$
\mathcal{J}_{m}(v)=\frac{1}{2}\|v\|^{2}_{\X} -\frac{m^{2s}}{2}|\T(v)|_{2}^{2}- \frac{1}{2^{*}_{s}}|\T(v)|_{2^{*}_{s}, W}^{2^{*}_{s}}- \int_{\partial^{0}\mathcal{S}_{2\pi}} F(x,\T(v)) dx
$$
defined for $v\in\X$.

More precisely, we will prove that $\J_{m}$ satisfies the assumptions of the Linking Theorem \cite{Rab}:
\begin{thm}\label{LinkingThm}
Let $(X, \|\cdot\|)$ be a real Banach space with $X=Y\bigoplus Z$, where $Y$ is finite dimensional. 
Let $R>r>0$ and $z\in Z$ such that $\|z\|=r$. \\
Define the following sets
\begin{align*}
&M=\{v=y+t z: y\in Y, \|v\|\leq R \mbox{ and } t\geq 0\},\\
&\partial M=\{v=y+t z: y\in Y, \|v\|= R, t\geq 0 \mbox{ or } \|v\|\leq  R, t=0 \},\\
&N=\{v\in Z: \|v\|=r\}.
\end{align*}
Let $\J\in \mathcal{C}^{1}(X,\R)$ be such that
$$
b:=\inf_{v\in N} \J(v)>a:=\max_{v\in \partial M} \J(v).
$$
If $\J$ satisfies the Palais-Smale condition at the level $c$, which is defined by setting
$$
c:=\inf_{\gamma \in \Gamma} \max_{v\in M} \J(\gamma(v))
$$
where 
$$
\Gamma:=\{\gamma \in \mathcal{C}(M, X): \gamma=Id \mbox{ on } \partial M\},
$$
then $c$ is a critical point of $\J$.
\end{thm}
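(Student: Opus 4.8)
The plan is to reduce the statement to two classical ingredients: a purely topological \emph{linking} (intersection) fact, proved via Brouwer degree, and a deformation argument that exploits the Palais--Smale condition at the level $c$. Preliminarily, one records that $M$ is compact — being a closed bounded subset of the finite dimensional space $Y\oplus\R z$ — so that $\max_{M}\J\circ\gamma$ is attained for every $\gamma\in\Gamma$, and that $\mathrm{Id}\in\Gamma$; hence $c$ is a well-defined real number with $c\le\max_{M}\J<\infty$.

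The core of the argument is the claim that $\gamma(M)\cap N\neq\emptyset$ for every $\gamma\in\Gamma$. To establish it I would let $P\colon X\to Y$ be the continuous linear projection onto $Y$ along $Z$, fix $\gamma\in\Gamma$, and consider the map $F\colon M\to Y\oplus\R z$ given by
$$
F(y+tz)=P\gamma(y+tz)+\Bigl(\|(\mathrm{Id}-P)\gamma(y+tz)\|-r\Bigr)\frac{z}{r}.
$$
A zero of $F$ in $\operatorname{int}M$ is exactly a point $v\in M$ with $P\gamma(v)=0$ and $\|(\mathrm{Id}-P)\gamma(v)\|=r$, i.e.\ $\gamma(v)\in N$. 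Identifying $Y\oplus\R z$ with $Y\times\R$ through $y+tz\mapsto(y,t)$, the set $\operatorname{int}M$ is a bounded open subset of this finite dimensional space with topological boundary $\partial M$. On $\partial M$ one has $\gamma=\mathrm{Id}$, hence $P\gamma(y+tz)=y$ and $\|(\mathrm{Id}-P)\gamma(y+tz)\|=t\|z\|=tr$, so there $F(y+tz)=y+(t-1)z=(y+tz)-z$; that is, $F$ coincides on $\partial M$ with the translation $F_0(v)=v-z$. Since $\|z\|=r<R$ and $z$ has $t$-coordinate $1\neq0$, the point $z$ lies on neither face of $\partial M$, so $F$ does not vanish on $\partial M$ and $\deg(F,\operatorname{int}M,0)$ is defined. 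As the degree depends only on boundary values, $\deg(F,\operatorname{int}M,0)=\deg(F_0,\operatorname{int}M,0)=\deg(\mathrm{Id},\operatorname{int}M,z)=1$, the last equality because $z\in\operatorname{int}M$. Hence $F$ has a zero in $\operatorname{int}M$, which proves the claim. I expect this degree computation — checking that $F$ is admissible and pinning down its behaviour on $\partial M$ — to be the main point; everything else is soft.

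Given the linking property, every $\gamma\in\Gamma$ satisfies $\max_{M}\J\circ\gamma\ge\inf_{N}\J=b$, so $c\ge b>a$, and in particular $c>\max_{\partial M}\J$. The proof then concludes by contradiction. If $c$ were not a critical value of $\J$, then, using the Palais--Smale condition at $c$, the quantitative deformation lemma would produce $\varepsilon\in(0,(c-a)/2)$ and $\eta\in\mathcal{C}([0,1]\times X,X)$ such that $\eta(1,\cdot)$ maps $\{\J\le c+\varepsilon\}$ into $\{\J\le c-\varepsilon\}$ while fixing every point where $\J\notin[c-2\varepsilon,c+2\varepsilon]$. Choosing $\gamma\in\Gamma$ with $\max_{M}\J\circ\gamma\le c+\varepsilon$ (possible since $c$ is an infimum) and setting $\gamma_1:=\eta(1,\cdot)\circ\gamma$, one checks that $\gamma_1\in\mathcal{C}(M,X)$ and $\gamma_1=\mathrm{Id}$ on $\partial M$ — indeed there $\gamma=\mathrm{Id}$ and $\J\le a<c-2\varepsilon$, so $\eta(1,\cdot)$ acts as the identity — hence $\gamma_1\in\Gamma$; but then $\max_{M}\J\circ\gamma_1\le c-\varepsilon<c$, contradicting the definition of $c$. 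Therefore $c$ is a critical value of $\J$, which is the assertion.
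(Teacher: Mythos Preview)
The paper does not supply a proof of this theorem: it is quoted verbatim as the classical Linking Theorem with a reference to Rabinowitz \cite{Rab}, and then applied. So there is no ``paper's own proof'' to compare against.

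Your proposal is a correct and complete argument, and it is precisely the standard one found in Rabinowitz's monograph: a Brouwer-degree computation in the finite dimensional slice $Y\oplus\R z$ shows that $\gamma(M)\cap N\neq\emptyset$ for every $\gamma\in\Gamma$, whence $c\ge b>a$; then the quantitative deformation lemma (available thanks to the Palais--Smale condition at level $c$) rules out the possibility that $c$ is a regular value. Your handling of the degree step is clean --- the reduction to $F_0(v)=v-z$ on $\partial M$ and the check that $z\in\operatorname{int}M$ are exactly what is needed --- and the deformation step is routine. One cosmetic remark: the theorem as stated in the paper says ``$c$ is a critical point of $\J$'', which is of course a slip for ``$c$ is a critical value''; you have interpreted and proved the intended statement.
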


By using the assumptions on $f$, it is easy to prove that  $\mathcal{J}_{m}$ is well defined on $\X$ and $\mathcal{J}_{m}\in \mathcal{C}^{1}(\X,\R)$. Moreover, by using the trace inequality, we notice that the quadratic part of $\mathcal{J}_{m}$ is nonnegative, that is
\begin{equation}\label{partequad}
\|v\|^{2}_{\X} -m^{2s}|\T(v)|_{2}^{2}\geq 0. 
\end{equation}

Let us note (see \cite{A2, A3}) that
$$
\X=< \theta(m\xi)> \oplus \Bigl\{ v\in \X: \int_{\partial^{0}\mathcal{S}_{2\pi}} \T(v) \,dx=0\Bigr \}=:\Y \oplus \Z,
$$
where $\dim \Y<\infty$ and $\Z$ is the orthogonal complement of $\Y$ with respect to the inner product in $\X$.
In order to prove that $\mathcal{J}_{m}$ verifies the Linking hypotheses, we prove the following lemmas.

\begin{lem}\label{Linking1}
There exist $\rho>0$ and $\eta>0$ such that
$\mathcal{J}_{m}(v)\geq \rho \mbox{ for } v\in \Z: \|v\|_{\X}=\eta$.
\end{lem}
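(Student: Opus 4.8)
The plan is to estimate $\mathcal{J}_m$ from below on the subspace $\Z$ by controlling the subcritical term $F$ and the critical term $|\T(v)|_{2^*_s,W}^{2^*_s}$ via the trace inequality and the Sobolev embedding of Theorem \ref{thm2}. First I would use $(f3)$ and $(f4)$ to obtain, for every $\varepsilon>0$, a constant $C_\varepsilon>0$ such that $|F(x,t)|\leq \varepsilon t^2 + C_\varepsilon |t|^p$ for all $x\in\R^N$, $t\in\R$; integrating over $\partial^0\mathcal{S}_{2\pi}$ gives
$$
\int_{\partial^0\mathcal{S}_{2\pi}} F(x,\T(v))\,dx \leq \varepsilon\,|\T(v)|_2^2 + C_\varepsilon\,|\T(v)|_p^p.
$$
For the critical term, since $W$ is bounded (assumption $(W1)$), $|\T(v)|_{2^*_s,W}^{2^*_s}\leq \|W\|_\infty |\T(v)|_{2^*_s}^{2^*_s}$.

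The key point is that on $\Z$ the quadratic part is coercive. Indeed, by the orthogonal splitting $\X=\Y\oplus\Z$ and Theorem \ref{thm6}, the functional $v\mapsto \|v\|_{\X}^2 - m^{2s}|\T(v)|_2^2$ vanishes exactly on $\Y=\langle\theta(m\xi)\rangle$; restricted to the finite-codimension complement $\Z$ it is a nonnegative quadratic form (by \eqref{partequad}) with trivial kernel, so there is $\delta\in(0,1]$ with
$$
\|v\|_{\X}^2 - m^{2s}|\T(v)|_2^2 \geq \delta\,\|v\|_{\X}^2 \qquad\text{for all } v\in\Z.
$$
(This is where I would be most careful: justifying the uniform gap $\delta$. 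Since $\Y$ is one-dimensional and is precisely the null space of the form, one can argue by a standard orthogonality/spectral argument, or by noting that on $\Z$ one has $|\T(v)|_2^2 \le (1-\delta')\,\kappa_s^{-1}\|v\|_{\X}^2$ for some $\delta'>0$ after subtracting the $\Y$-component; with $\kappa_s=1$ this is the claimed inequality. This ``spectral gap on $\Z$'' is the main obstacle, and I expect the authors supply it via the earlier references \cite{A2,A3}.)

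Combining these estimates, using the trace inequality (Theorem \ref{tracethm}(ii), with $\kappa_s=1$) to absorb the $\varepsilon|\T(v)|_2^2$ term into $\tfrac{\delta}{4}\|v\|_{\X}^2$ by choosing $\varepsilon$ small, and using Theorem \ref{thm2} to bound $|\T(v)|_p \leq C\|v\|_{\X}$ and $|\T(v)|_{2^*_s}\leq C\|v\|_{\X}$, we get for $v\in\Z$
$$
\mathcal{J}_m(v) \geq \frac{\delta}{4}\,\|v\|_{\X}^2 - C_1\|v\|_{\X}^p - C_2\|v\|_{\X}^{2^*_s}.
$$
Since $2<p<2^*_s$ and $2^*_s>2$, the right-hand side is a function of $t=\|v\|_{\X}$ of the form $\tfrac{\delta}{4}t^2 - C_1 t^p - C_2 t^{2^*_s}$, which is strictly positive for $t>0$ sufficiently small; hence there exist $\eta>0$ small and $\rho>0$ such that $\mathcal{J}_m(v)\geq\rho$ whenever $v\in\Z$ and $\|v\|_{\X}=\eta$. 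This proves the lemma. I would remark that $\rho$ and $\eta$ obtained this way can be chosen depending only on $\delta$, $\|W\|_\infty$, $C_\varepsilon$ and the embedding constants, a fact that will be used later when passing to the limit $m\to 0$.
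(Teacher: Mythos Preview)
Your approach matches the paper's: bound $F$ via $(f3)$--$(f4)$, bound the critical term by Sobolev, and use a coercivity estimate on $\Z$ for the quadratic part. The one step you flag as delicate --- the uniform gap $\|v\|_{\X}^2 - m^{2s}|\T(v)|_2^2 \geq \delta\|v\|_{\X}^2$ on $\Z$ --- is indeed the crux, and note that ``nonnegative with trivial kernel'' alone does not yield it on an infinite-dimensional space; the paper does \emph{not} defer this to \cite{A2,A3} but proves it directly by contradiction: normalize a violating sequence $z_j$ with $\|z_j\|_{\X}=1$, use the compact embedding $\T(\X)\hookrightarrow L^2(-\pi,\pi)^N$ to get $\T(z_j)\to\T(z)$ strongly and $z_j\rightharpoonup z$ weakly, deduce $|\T(z)|_2=m^{-s}$ and (by weak lower semicontinuity) $\|z\|_{\X}^2-m^{2s}|\T(z)|_2^2=0$, then invoke Theorem~\ref{thm6} to force $z\in\Y\cap\Z=\{0\}$, a contradiction. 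Your closing remark is also premature: the gap constant $\delta=C_m$ obtained this way genuinely depends on $m$, and the paper handles the uniform-in-$m$ lower bound by a separate argument in Section~6.
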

\begin{proof}
Firstly we show that there exists a constant $C_{m}>0$ such that
\begin{align}\label{eqnorm}
\|v\|_{\X}^{2}-m^{2s}|\T(v)|_{2}^{2}\geq C_{m} \|v\|_{\X}^{2}
\end{align}
for any $v\in \Z$.
Assume by contradiction that there exists a sequence $(v_{j})\subset \Z$ such that
$$
\|v_{j}\|_{\X}^{2}-m^{2s}|\T(v_{j})|_{2}^{2}< \frac{1}{j} \|v_{j}\|_{\X}^{2}.
$$
Let $z_{j}=v_{j}/\|v_{j}\|_{\X}$. Then $\|z_{j}\|_{\X}=1$, so we can assume that $z_{j}\rightharpoonup z$ in $\X$ and $\T(z_{j}) \rightarrow \T(z)$ in $L^{2}(-\pi,\pi)^{N}$ for some $z\in \Z$ ($\Z$ is weakly closed).

Hence, for any $j\in \N$
$$
1-m^{2s}|\T(z_{j})|_{2}^{2}<\frac{1}{j},
$$
so  we get $|\T(z_{j})|^{2}_{2}\rightarrow \frac{1}{m^{2s}}$ that is $|\T(z)|_{2}=\frac{1}{m^{s}}$.

On the other hand
\begin{align*}
0&\leq \|z\|_{\X}^{2}-m^{2s}|\T(z)|_{2}^{2}\\
&\leq \liminf_{j\rightarrow \infty}\|z_{j}\|_{\X}^{2}-m^{2s}|\T(z_{j})|_{2}^{2}=0
\end{align*}
implies that $z=c\, \theta(m\xi)$ by (\ref{eqY}). But $z\in \Z$ so $c=0$ and this is a contradiction because of $|\T(z)|_{2}=\frac{1}{m^{s}}>0$. This completes the proof of (\ref{eqnorm}).

It follows from $(f3)$ and $(f4)$ that for every $\e>0$ there exists $C_{\e}>0$ such that 
$$
\left|F(x, t)\right|\leq \e t^{2}+C_{\e}|t|^{p} \mbox{ for all } t\in \R.
$$
By applying Sobolev inequality (see Theorem \ref{thm2}) we can see that 
$$
\left|\int_{\partial^{0}\mathcal{S}_{2\pi}} F(x, \T(v)) \, dx\right|\leq \left(\frac{\e}{m^{2s}} \|v\|_{\X}^{2}+C'_{*}C_{\e}\|v\|_{\X}^{p}\right).
$$
This  and (\ref{eqnorm}) give
\begin{align*}
\mathcal{J}_{m}(v)&\geq C_{m}\|v\|_{\X}^{2}-\frac{W(0)}{2^{*}_{s}}|\T(v)|_{2^{*}_{s}}^{2^{*}_{s}} - \int_{\partial^{0}\mathcal{S}_{2\pi}} F(x, \T(v)) \, dx\\
&\geq \Bigl(C_{m}-\frac{\varepsilon}{m^{2s}}\Bigr)\|v\|_{\X}^{2}-C''_{*}\|v\|^{2^{*}_{s}}_{\X}-C'_{*}C_{\e} \|v\|_{\X}^{p}
\end{align*}
for any $v\in \Z$.
Choosing $\varepsilon$ sufficiently small, there exist $\rho>0$ and $\eta>0$ such that
$$
\mathcal{J}_{m}(v)\geq \rho \mbox{ for all  } v\in \Z: \|v\|_{\X}=\eta.
$$
\end{proof}

Now, we collect some preliminary lemmas which we will used later. 
First, we have the following result whose proof can be obtained following \cite{BCPS}.
\begin{lem}\label{lemma1}
Let $\eta$ be a cut-off function such that $\eta(t)=1$ for $t\in [0, \frac{1}{2}]$, $\eta(t)=0$ for $t\geq 1$, and we consider 
$\phi(x, \xi)=\eta\left(\frac{\sqrt{|x|^{2}+\xi^{2}}}{r}\right)$, where $r>0$ is such that $\overline{{B}^{+}_{r}}=\{(x, \xi)\in \R^{N+1}_{+}: \sqrt{|x|^{2}+\xi^{2}}\leq r\} \subset \overline{\mathcal{S}}_{2\pi}$.
Let 
$\varphi_{\e}(x, \xi)=\frac{1}{(2\pi)^{N}} \phi(x, \xi) \psi_{\e}(x, \xi)$ where $\psi_{\e}(\cdot, \xi)=P^{s}_{\xi}(\cdot, \xi)*w_{\e}$, 
$P^{s}_{\xi}(x, \xi)=C_{N, s} \frac{\xi^{2-2s}}{(|x|^{2}+\xi^{2})^{\frac{N+2-2s}{2}}}$ is the Poisson kernel \cite{CafSil}, and $w_{\e}(x)=\frac{\e^{\frac{N-2s}{2}}}{(\e^{2}+|x|^{2})^{\frac{N-2s}{2}}}$.\\
Then the following estimates hold:
\begin{compactenum}[(i)]
\item $\iint_{\mathcal{S}_{2\pi}} \xi^{1-2s} |\nabla \varphi_{\e}|^{2} dx d\xi\leq S_{*}^{\frac{N}{2s}}+O(\e^{N-2s})$,  where $S_{*}$ is the best Sobolev constant of the embedding $H^{s}(\R^{N})\subset L^{2^{*}_{s}}(\R^{N})$ (see \cite{CT}).
\item $|\varphi_{\e}(\cdot, 0)|_{2^{*}_{s}}^{2^{*}_{s}}=S_{*}^{\frac{N}{2s}}+O(\e^{N})$.
\item 
\begin{equation*}
|\varphi_{\e}(\cdot, 0)|_{2}^{2}=
\left\{
\begin{array}{ll}
K_{1} \e^{2s}+O(\e^{N-2s}) &\mbox{ if } N>4s \\
K_{1} \e^{2s}|\log \varepsilon|+O(\e^{2s}) &\mbox{ if } N=4s.
\end{array}
\right.
\end{equation*}
\item $|\varphi_{\e}(\cdot, 0)|_{1}\leq K_{2} \e^{\frac{N-2s}{2}}$.
\item $|\varphi_{\e}(\cdot, 0)|_{2^{*}_{s}-1}^{2^{*}_{s}-1}\leq K_{3} \e^{\frac{N-2s}{2}}$.
\item 
\begin{equation*}
|\varphi_{\e}(\cdot, 0)|_{q}^{q}=
\left\{
\begin{array}{ll}
K_{4} \e^{N-\frac{(N-2s)}{2} q} &\mbox{ if } q>\frac{N}{N-2s} \\
K_{4}  \e^{q\frac{(N-2s)}{2}} &\mbox{ if } q<\frac{N}{N-2s}.
\end{array}
\right.
\end{equation*}
\end{compactenum}
\end{lem}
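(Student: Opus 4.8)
The plan is to derive all six estimates from one scaling identity for $w_\e$ together with the classical description of the Poisson extension of $w_\e$ as an extremal for the trace Sobolev inequality, in the spirit of \cite{BCPS}. First I would record the structural facts underlying the whole lemma. Since $\psi_\e(\cdot,\xi)=P^s_\xi(\cdot,\xi)*w_\e$, the function $\psi_\e$ is the $s$-harmonic extension of $w_\e$ to $\R^{N+1}_{+}$; in particular $\psi_\e(x,0)=w_\e(x)$ and $\dive(\xi^{1-2s}\nabla\psi_\e)=0$ in $\R^{N+1}_{+}$. From $w_\e(x)=\e^{-\frac{N-2s}{2}}w_1(x/\e)$ one gets the scaling $\psi_\e(x,\xi)=\e^{-\frac{N-2s}{2}}\psi_1(x/\e,\xi/\e)$, together with the decay $|\psi_\e(x,\xi)|+|(x,\xi)|\,|\nabla\psi_\e(x,\xi)|\le C\e^{\frac{N-2s}{2}}|(x,\xi)|^{-(N-2s)}$ for $|(x,\xi)|\ge r/2$. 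Finally, with the normalisation carried by $w_\e$ (and the prefactor $(2\pi)^{-N}$), the un-truncated function realises the best constant, so that $\iint_{\R^{N+1}_{+}}\xi^{1-2s}|\nabla\psi_\e|^{2}\,dx\,d\xi=S_*^{\frac{N}{2s}}=|w_\e|_{2^*_s}^{2^*_s}$, the explicit value being the one computed in \cite{CT}.

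\emph{Proof of (i).} Writing $\nabla\varphi_\e=\frac{1}{(2\pi)^{N}}(\phi\,\nabla\psi_\e+\psi_\e\,\nabla\phi)$ and using $\overline{B^+_r}\subset\overline{\mathcal{S}}_{2\pi}$, $\supp\phi\subset\overline{B^+_r}$ and $\supp\nabla\phi\subset\{r/2\le\sqrt{|x|^{2}+\xi^{2}}\le r\}$, I would split
$$
\iint_{\mathcal{S}_{2\pi}}\xi^{1-2s}|\nabla\varphi_\e|^{2}\,dx\,d\xi=\frac{1}{(2\pi)^{2N}}\iint_{\R^{N+1}_{+}}\xi^{1-2s}|\nabla\psi_\e|^{2}\,dx\,d\xi+\frac{1}{(2\pi)^{2N}}R_\e,
$$
where $R_\e$ collects the tail $-\iint_{|(x,\xi)|\ge r/2}\xi^{1-2s}|\nabla\psi_\e|^{2}$, the cross term and the $|\nabla\phi|^{2}$ term. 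Each contribution to $R_\e$ is estimated on the annulus (or its complement) by the decay bound above and the change of variables $x=\e y$, $\xi=\e\zeta$, which gives $|R_\e|\le C\e^{N-2s}$; this yields (i) once $(2\pi)^{-2N}$ is absorbed into the normalisation.

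\emph{Proof of (ii)--(vi).} On $\partial^{0}\mathcal{S}_{2\pi}$ one has $\varphi_\e(x,0)=\frac{1}{(2\pi)^{N}}\phi(x,0)\,w_\e(x)$, so each of the quantities in (ii)--(vi) is a fixed multiple of $\int_{(-\pi,\pi)^{N}}\phi(x,0)^{q}\,w_\e(x)^{q}\,dx$ for $q$ equal to $2^*_s$, $2$, $1$, $2^*_s-1$ and $q$ respectively. Using $0\le\phi\le1$ for the upper bounds (and, for the exact asymptotics in (ii) and (iii), subtracting the full-space integral over $\R^{N}$, whose error $\int(1-\phi^{q})w_\e^{q}$ is supported in $\{|x|\ge r/2\}$), the change of variables $x=\e y$ turns $\int_{|x|\le r}w_\e^{q}\,dx$ into
$$
\e^{\,N-\frac{N-2s}{2}q}\int_{|y|\le r/\e}\frac{dy}{(1+|y|^{2})^{\frac{N-2s}{2}q}}.
$$
Then I would distinguish three regimes according to the sign of $q(N-2s)-N$: if $q(N-2s)>N$ the last integral converges, giving the power $\e^{\,N-\frac{N-2s}{2}q}$ (this covers (ii), $q=2^*_s$, where the exponent is $0$ and the leading term is $S_*^{N/2s}$ with tail error $O(\e^{N})$; it covers (v), $q=2^*_s-1=\frac{N+2s}{N-2s}$, where the exponent equals $\frac{N-2s}{2}$; and it is the case $N>4s$ of (iii), $q=2$, giving $\e^{2s}$ plus an $O(\e^{N-2s})$ tail); if $q(N-2s)=N$ the integral grows like $\log(r/\e)$, which is the case $N=4s$ of (iii); and if $q(N-2s)<N$ the integral is dominated by $\int_{|y|\le r/\e}|y|^{-q(N-2s)}\,dy\sim(r/\e)^{N-q(N-2s)}$, so the product becomes $\e^{\,\frac{N-2s}{2}q}$ (this covers (iv), $q=1$, and the case $q<\frac{N}{N-2s}$ of (vi)). Collecting the exponents gives precisely the asymptotics claimed.

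\emph{Main obstacle.} The computations above are routine scaling estimates; the two delicate points are, first, that the Poisson extension of $w_\e$ is genuinely the extremal of the trace Sobolev inequality, so that the leading constants in (i) and (ii) are sharp (equal to $S_*^{N/2s}$) with no loss at leading order --- this is where the explicit extremal of \cite{CT} enters --- and second, the bookkeeping that allows one to replace integrals over $\mathcal{S}_{2\pi}$ (resp.\ over $(-\pi,\pi)^{N}$) by integrals over $B^+_r$ (resp.\ over $\R^{N}$ up to the controlled tail), which is legitimate precisely because the truncation confines $\varphi_\e$ to $\overline{B^+_r}$, a set contained in a single period cell, so that $\varphi_\e$ is automatically admissible and $2\pi$-periodic. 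Once these are in place, (i)--(vi) follow exactly as in \cite{BCPS}.
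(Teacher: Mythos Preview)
Your proposal is correct and follows exactly the approach the paper intends: the paper does not give its own proof of this lemma but simply states that it ``can be obtained following \cite{BCPS}'', and your argument is precisely the standard \cite{BCPS} computation --- the scaling identity for $w_\e$, the extremality of its Poisson extension for the trace Sobolev inequality, the truncation error on the annulus for (i), and the change of variables $x=\e y$ with the trichotomy on $q(N-2s)-N$ for (ii)--(vi). The only point worth tightening is the handling of the prefactor $(2\pi)^{-N}$: rather than ``absorbing it into the normalisation'', one should note that the constants $S_*^{N/2s}$, $K_1,\dots,K_4$ in the statement already implicitly carry this factor (or, equivalently, that $w_\e$ is normalised so that the leading constants match); this is purely cosmetic and does not affect the scaling exponents, which are what the subsequent lemmas actually use.
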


Arguing as in the proof of \cite{CFP}, we are able to prove that
\begin{lem}\label{lemma2}
Let $v=y+t z_{\e}\in Q_{\e}:=\{y+t z_{\e}: y\in \Y, t\geq 0\}$, where $z_{\e}=[\varphi_{\e}-(\varphi_{\e})_{\Pi}]\theta(m\xi)$ and $(\varphi_{\e})_{\Pi}:= \frac{1}{(2\pi)^{N}}\int_{(-\pi, \pi)^{N}} \varphi_{\e}(x, \xi)\, dx$. Then, for any $\e>0$ we have
\begin{align*}
\int_{\partial^{0}\mathcal{S}_{2\pi}} &W(x)|\T(v)|^{2^{*}_{s}} dx \\
&\geq \int_{\partial^{0}\mathcal{S}_{2\pi}}W(x) |t \T(z_{\e})|^{2^{*}_{s}} dx+\frac{1}{2} \int_{\partial^{0}\mathcal{S}_{2\pi}} W(x)|\T(y)|^{2^{*}_{s}} dx-K_{5} t^{2^{*}_{s}} \e^{\frac{N(N-2s)}{N+2s}}.
\end{align*}
\end{lem}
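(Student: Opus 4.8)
The plan is to estimate the critical-exponent term $\int_{\partial^{0}\mathcal{S}_{2\pi}}W(x)|\T(v)|^{2^{*}_{s}}\,dx$ from below, splitting off the contribution of $t\T(z_{\e})$ and of $\T(y)$. Write $v = y + t z_{\e}$ with $y\in\Y$, so $\Y = \langle \theta(m\xi)\rangle$ is one-dimensional; thus $\T(y) = a\,\theta(0) = a$ is a \emph{constant} on $\partial^{0}\mathcal{S}_{2\pi}$ for some $a\in\R$. Denote $g_{\e} := \T(t z_{\e}) = t[\varphi_{\e}(\cdot,0) - (\varphi_{\e})_{\Pi}]$, so that $\T(v) = a + g_{\e}$. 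The key elementary inequality I would invoke is the following: for every $\delta>0$ there is $C_{\delta}>0$ with
\[
|a+b|^{2^{*}_{s}} \geq (1-\delta)|b|^{2^{*}_{s}} + \tfrac12 |a|^{2^{*}_{s}} - C_{\delta}|a||b|^{2^{*}_{s}-1} - C_{\delta}|a|^{2^{*}_{s}-1}|b|
\]
for all $a,b\in\R$; actually a cleaner route is to use $|a+b|^{2^{*}_{s}} \geq |b|^{2^{*}_{s}} + |a|^{2^{*}_{s}} - C(|a||b|^{2^{*}_{s}-1} + |a|^{2^{*}_{s}-1}|b|)$, which is what one gets by convexity/mean value arguments (this is exactly the type of estimate used in \cite{CFP}). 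Applying it pointwise with $a$ the constant value of $\T(y)$ and $b = g_{\e}(x)$, then integrating against $W$, gives
\[
\int_{\partial^{0}\mathcal{S}_{2\pi}}\!\! W|\T(v)|^{2^{*}_{s}}
\geq \int_{\partial^{0}\mathcal{S}_{2\pi}}\!\! W|g_{\e}|^{2^{*}_{s}}
+ \int_{\partial^{0}\mathcal{S}_{2\pi}}\!\! W|\T(y)|^{2^{*}_{s}}
- C|a|\!\int\! W|g_{\e}|^{2^{*}_{s}-1} - C|a|^{2^{*}_{s}-1}\!\int\! W|g_{\e}|.
\]

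Next I would control the two error integrals using the continuity and periodicity of $W$ (so $W$ is bounded on $[-\pi,\pi]^{N}$) together with Lemma \ref{lemma1}. Since $g_{\e} = t[\varphi_{\e}(\cdot,0) - (\varphi_{\e})_{\Pi}]$ and $(\varphi_{\e})_{\Pi} \leq C|\varphi_{\e}(\cdot,0)|_1 \leq C K_2 \e^{\frac{N-2s}{2}}$ by item (iv), the constant shift is of lower order and can be absorbed; effectively $\int W|g_{\e}|^{2^{*}_{s}-1} \lesssim t^{2^{*}_{s}-1}|\varphi_{\e}(\cdot,0)|_{2^{*}_{s}-1}^{2^{*}_{s}-1} + (\text{l.o.t.}) \lesssim t^{2^{*}_{s}-1}\e^{\frac{N-2s}{2}}$ by item (v), and $\int W|g_{\e}| \lesssim t|\varphi_{\e}(\cdot,0)|_1 + (\text{l.o.t.}) \lesssim t\,\e^{\frac{N-2s}{2}}$ by item (iv). Now since $y$ ranges over a \emph{finite-dimensional} space and we are building $Q_{\e}$ for the Linking geometry, $|a| = |\T(y)|$ is comparable to $\|y\|_{\X}$ and in the relevant regime $\|y\|_{\X} \lesssim \|v\|_{\X}$, which on the linking set is bounded; moreover one has $t \leq C$ as well. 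Combining, both error terms are bounded by $C t^{2^{*}_{s}}\e^{\frac{N-2s}{2}} \cdot (\text{bounded factor})$. One then checks $\frac{N-2s}{2} \geq \frac{N(N-2s)}{N+2s}$ precisely when $N+2s \geq 2N$, i.e. $N \leq 2s$ — which is false here — so in fact I should be more careful: the correct bookkeeping is to keep $t$ explicit and note that the mixed terms carry the factor $|a|^{j} t^{2^{*}_{s}-j}$ with $j\in\{1,2^{*}_{s}-1\}$, and one uses $|a| \lesssim \|y\| \lesssim t$ (valid on $\partial Q_{\e}$ where the decomposition forces comparable sizes, or after a Young's inequality step) to bound everything by $t^{2^{*}_{s}}$ times the \emph{worst} exponent of $\e$, which is $\e^{\frac{N(N-2s)}{N+2s}}$ — matching item (vi) with $q = 2^{*}_{s}-1 = \frac{N+2s}{N-2s}$, giving $|\varphi_{\e}(\cdot,0)|_{q}^{q} = K_4 \e^{N - \frac{N-2s}{2}\cdot\frac{N+2s}{N-2s}} = K_4\e^{\frac{N(N-2s)}{N+2s}}$, since $q > \frac{N}{N-2s}$.

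Finally, to get the clean statement I would absorb the factor $(1-\delta)$ in front of $\int W|g_{\e}|^{2^{*}_{s}}$ and the loss $\tfrac12$ in front of $\int W|\T(y)|^{2^{*}_{s}}$ by choosing $\delta$ small and recognizing that the $\delta$-loss on the first term is itself a lower-order quantity of the same type $O(t^{2^{*}_{s}}\e^{\frac{N(N-2s)}{N+2s}})$ once one uses item (vi) again; alternatively, since $\int W|g_{\e}|^{2^{*}_{s}} = \int W|t\T(z_{\e})|^{2^{*}_{s}} + O(t^{2^{*}_{s}}\e^{\frac{N-2s}{2}})$ (expanding the constant shift $(\varphi_{\e})_{\Pi}$ and using (iv), (v)), the term $\int W|t\T(z_{\e})|^{2^{*}_{s}}$ appears on the right with coefficient exactly $1$ as claimed. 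I expect the main obstacle to be the careful bookkeeping of exponents of $\e$ in the cross terms — specifically verifying that every error produced by the shift $(\varphi_{\e})_{\Pi}$ and by the pointwise convexity inequality is indeed $O(t^{2^{*}_{s}}\e^{\frac{N(N-2s)}{N+2s}})$ rather than something larger, which hinges on the hypothesis $N \geq 4s$ entering through item (vi) and on controlling $|\T(y)|$ by $t$ on the cone $Q_{\e}$; the rest is the standard argument of \cite{CFP} transcribed to the periodic-extension setting.
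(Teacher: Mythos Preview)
Your overall strategy --- the pointwise convexity inequality $|a+b|^{2^{*}_{s}} \geq |a|^{2^{*}_{s}} + |b|^{2^{*}_{s}} - C(|a|\,|b|^{2^{*}_{s}-1} + |a|^{2^{*}_{s}-1}|b|)$ followed by the integral estimates of Lemma~\ref{lemma1} --- is exactly the CFP-type argument the paper uses. However, your handling of the cross terms contains two concrete errors.

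First, the claim $|a|\lesssim t$ on $Q_{\e}$ is false: the set $Q_{\e}=\{y+tz_{\e}: y\in\Y,\ t\geq 0\}$ imposes \emph{no} relation between $\|y\|$ and $t$, and the lemma must hold for every such $v$, not only on a bounded linking set. You cannot absorb $|a|$ into powers of $t$.

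Second, your computation from item (vi) is wrong: with $q=2^{*}_{s}-1=\frac{N+2s}{N-2s}$ one gets $N-\frac{N-2s}{2}q=N-\frac{N+2s}{2}=\frac{N-2s}{2}$, which is just item (v), not $\frac{N(N-2s)}{N+2s}$.

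Both the exponent $\frac{N(N-2s)}{N+2s}$ and the coefficient $\frac{1}{2}$ in front of $|\T(y)|^{2^{*}_{s}}_{2^{*}_{s},W}$ arise from the Young step you mention but never carry out. After (iv)--(v) the two cross terms are $C|a|\,t^{2^{*}_{s}-1}\e^{\frac{N-2s}{2}}$ and $C|a|^{2^{*}_{s}-1}t\,\e^{\frac{N-2s}{2}}$. Young's inequality with exponents $\bigl(2^{*}_{s},\frac{2^{*}_{s}}{2^{*}_{s}-1}\bigr)=\bigl(2^{*}_{s},\frac{2N}{N+2s}\bigr)$ gives
\begin{align*}
|a|\,t^{2^{*}_{s}-1}\e^{\frac{N-2s}{2}} &\leq \tfrac{1}{4}|a|^{2^{*}_{s}} + C\bigl(t^{2^{*}_{s}-1}\e^{\frac{N-2s}{2}}\bigr)^{\frac{2N}{N+2s}} = \tfrac{1}{4}|a|^{2^{*}_{s}} + C\,t^{2^{*}_{s}}\e^{\frac{N(N-2s)}{N+2s}},\\
|a|^{2^{*}_{s}-1}t\,\e^{\frac{N-2s}{2}} &\leq \tfrac{1}{4}|a|^{2^{*}_{s}} + C\bigl(t\,\e^{\frac{N-2s}{2}}\bigr)^{2^{*}_{s}} = \tfrac{1}{4}|a|^{2^{*}_{s}} + C\,t^{2^{*}_{s}}\e^{N}.
\end{align*}
The two $\frac{1}{4}|a|^{2^{*}_{s}}$ pieces are absorbed into $|\T(y)|^{2^{*}_{s}}_{2^{*}_{s},W}$ (hence the $\frac{1}{2}$), and since $N>\frac{N(N-2s)}{N+2s}$ the second remainder is dominated by the first. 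This is precisely the paper's computation, and it closes the argument without any assumption relating $|a|$ to $t$.
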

\begin{proof}
Firstly, we observe that by $(iv)$ of Lemma \ref{lemma1} we have
\begin{equation}\label{phiinfA}
|(\T(\varphi_{\e}))_{\Pi}|_{2}^{2}\leq C_{1} |\T(\varphi_{\e})|_{1}^{2}\leq C_{2}\e^{N-2s}
\end{equation}
which implies that 
\begin{equation}\label{phiinf}
|(\T(\varphi_{\e}))_{\Pi}|_{\infty}\leq C_{3}\e^{\frac{N-2s}{2}}.
\end{equation}
Recalling the following identity
\begin{align}\label{identity}
|u|^{2^{*}_{s}}_{2^{*}_{s}, W}=2^{*}_{s} \int_{(-\pi, \pi)^{N}} W(x) dx \int_{0}^{u} |\tau|^{{2^{*}_{s}}-2} \tau d\tau,
\end{align}
and by using $(W2)$, (\ref{phiinfA}), (\ref{phiinf})  and $(v)$ of Lemma \ref{lemma1}, we have
\begin{align}\label{CY2star}
&\left|\int_{\partial^{0}\mathcal{S}_{2\pi}} W(x)(|\T(z_{\e})|^{2^{*}_{s}}-|\T(\varphi_{\e})|^{2^{*}_{s}}) dx \right| \nonumber \\
&=\left|2^{*}_{s}\int_{0}^{1} d\tau \int_{\partial^{0}\mathcal{S}_{2\pi}} W(x)\left[|T(\varphi_{\e})-\tau (\T(\varphi_{\e}))_{\Pi}|^{2^{*}_{s}-2}(T(\varphi_{\e})-\tau (\T(\varphi_{\e}))_{\Pi})\right] (\T(\varphi_{\e}))_{\Pi} dx \right| \nonumber \\
&\leq C_{4}W(0)\left(|\T(\varphi_{\e})|_{2^{*}_{s}-1}^{2^{*}_{s}-1} |(\T(\varphi_{\e}))_{\Pi}|_{\infty}+|(\T(\varphi_{\e}))_{\Pi}|_{2}^{2^{*}_{s}}\right) \nonumber \\
&\leq C_{5}\e^{N-2s}.
\end{align}
Moreover
\begin{align}\label{4.8}
|\T(z_{\e})|^{2^{*}_{s}-1}_{2^{*}_{s}-1}&=|\T(\varphi_{\e})-(\T(\varphi_{\e}))_{\Pi}|^{2^{*}_{s}-1}_{2^{*}_{s}-1} \nonumber\\
&\leq C_{6}(|\T(\varphi_{\e})|^{2^{*}_{s}-1}_{2^{*}_{s}-1}+|(\T(\varphi_{\e}))_{\Pi}|^{2^{*}_{s}-1}_{2^{*}_{s}-1}) \nonumber \\
&\leq C_{7}\e^{\frac{N-2s}{2}}
\end{align}
and
\begin{align}\label{4.9}
|\T(z_{\e})|_{1}\leq |\T(\varphi_{\e})|_{1}+|(\T(\varphi_{\e}))_{\Pi}|_{1}\leq C_{8} \e^{\frac{N-2s}{2}}. 
\end{align}
Now, by using (\ref{identity}) again, we derive that
\begin{align*}
&|\T(y+t z_{\e})|^{2^{*}_{s}}_{2^{*}_{s}, W}-|t\T(z_{\e})|_{2^{*}_{s}, W}^{2^{*}_{s}}-|\T(y)|_{2^{*}_{s}, W}^{2^{*}_{s}} \\
&=2^{*}_{s}\int_{0}^{1} d\tau \int_{\partial^{0}\mathcal{S}_{2\pi}} W(x)\left[|t\T(z_{\e})+\tau \T(y)|^{2^{*}_{s}-2}(t\T(z_{\e})+\tau \T(y))-|\tau \T(y)|^{2^{*}_{s}-2} \tau \T(y)\right] \T(y) dx \\
&=2^{*}_{s} (2^{*}_{s}-1)\int_{0}^{1} d\tau \int_{\partial^{0}\mathcal{S}_{2\pi}} W(x)|t\alpha(x)\T(z_{\e})+\tau \T(y)|^{2^{*}_{s}-2}(t\T(z_{\e})) \T(y) dx,
\end{align*}
for some measurable function $\alpha(x)$ such that $0<\alpha(x)<1$.
Using Young's inequality, estimates (\ref{4.8}) and (\ref{4.9}) and the fact that all norms in $\Y$ are equivalent, we deduce from the last inequality that
\begin{align*}
&\left||\T(y+t z_{\e})|^{2^{*}_{s}}_{2^{*}_{s}, W}-|t\T(z_{\e})|_{2^{*}_{s}, W}^{2^{*}_{s}}-|\T(y)|_{2^{*}_{s}, W}^{2^{*}_{s}}\right| \\
&\leq C_{9} \int_{0}^{1} d\tau \int_{\partial^{0}\mathcal{S}_{2\pi}} W(x)\left[|\T(y)| |t\T(z_{\e})|^{2^{*}_{s}-1}+\tau^{2^{*}_{s}-2} |t\T(z_{\e})| |\T(y)|^{2^{*}_{s}-1}\right] dx \\
&\leq C_{10}\left(|t\T(z_{\e})|_{2^{*}_{s}-1}^{2^{*}_{s}-1}|\T(y)|_{\infty, W}+ |t\T(z_{\e})|_{1} |\T(y)|^{2^{*}_{s}-1}_{\infty, W}\right) \\
&\leq C_{11}\left(|t\T(z_{\e})|_{2^{*}_{s}-1}^{2^{*}_{s}-1}|\T(y)|_{2^{*}_{s}, W}+ |t\T(z_{\e})|_{1}|\T(y)|^{2^{*}_{s}-1}_{2^{*}_{s}, W}\right) \\
&\leq C_{12} (C_{13} t^{{2^{*}_{s}}-1} \e^{\frac{N-2s}{2}} |\T(y)|_{2^{*}_{s}, W}+C_{14} t^{2^{*}_{s}} \e^{N})+\frac{1}{4}|\T(y)|_{2^{*}_{s}, W}^{2^{*}_{s}} \\ 
&\leq \frac{1}{2} |\T(y)|_{2^{*}_{s}, W}^{2^{*}_{s}}+C_{15} t^{2^{*}_{s}} \e^{\frac{N(N-2s)}{N+2s}}
\end{align*}
which completes the proof of Lemma.

Finally, we note that the above estimate gives the following inequality which we will use later
\begin{align}\label{4.11}
&\left| |\T(y+t z_{\e})|^{2^{*}_{s}}_{2^{*}_{s}, W}-|t\T(z_{\e})|_{2^{*}_{s}, W}^{2^{*}_{s}}-|\T(y)|_{2^{*}_{s}, W}^{2^{*}_{s}}\right| \nonumber\\
&\quad \quad \quad \quad \leq C_{16} \left(t^{{2^{*}_{s}}-1} \e^{\frac{N-2s}{2}} |\T(y)|_{2^{*}_{s}}+t^{2^{*}_{s}} \e^{N}\right)+\frac{1}{4}|\T(y)|_{2^{*}_{s}, W}^{2^{*}_{s}} {\color{red}{.}}
\end{align}
\end{proof}

\begin{lem}\label{lemimportant}
Let $z_{\e}$ be the function defined as in Lemma \ref{lemma2}.
Then we have 
\begin{align}\label{important1}
\|z_{\e}\|_{\X}^{2}- m^{2s} |\T(z_{\e})|_{L^{2}(0,T)^{N}}^{2}\leq \iint_{\mathcal{S}_{2\pi}}\xi^{1-2s} |\nabla \varphi_{\e}|^{2} dx d\xi.
\end{align}
\end{lem}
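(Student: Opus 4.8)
The plan is to exploit that $\theta(m\xi)$ is exactly the periodic extension of the constant function $1$, so that, when $\|z_{\e}\|_{\X}^{2}$ is expanded, the cross term together with the zeroth order term combine into the boundary quantity $m^{2s}|\T(z_{\e})|_{2}^{2}$, and what survives is a weighted Dirichlet energy of $\varphi_{\e}-(\varphi_{\e})_{\Pi}$ with the weight $\theta(m\xi)^{2}\le 1$. Set $\Theta(\xi):=\theta(m\xi)$ and $g:=\varphi_{\e}-(\varphi_{\e})_{\Pi}$, so that $z_{\e}=g\,\Theta\in\X$. Since $\Theta$ depends only on $\xi$ we have $\nabla z_{\e}=\Theta\nabla g+g\nabla\Theta$, and expanding $\|z_{\e}\|_{\X}^{2}=\iint_{\mathcal{S}_{2\pi}}\xi^{1-2s}(|\nabla z_{\e}|^{2}+m^{2}z_{\e}^{2})\,dx\,d\xi$ and grouping the cross term, the $|\nabla\Theta|^{2}$ term and the zeroth order term as $\nabla\Theta\cdot\nabla(g^{2}\Theta)+m^{2}\Theta(g^{2}\Theta)$, one obtains
\begin{equation*}
\|z_{\e}\|_{\X}^{2}=\iint_{\mathcal{S}_{2\pi}}\xi^{1-2s}\,\Theta^{2}\,|\nabla g|^{2}\,dx\,d\xi+\iint_{\mathcal{S}_{2\pi}}\xi^{1-2s}\bigl(\nabla\Theta\cdot\nabla(g^{2}\Theta)+m^{2}\,\Theta\,(g^{2}\Theta)\bigr)\,dx\,d\xi.
\end{equation*}

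The next step is to evaluate the second integral by means of the weak formulation. By Theorem \ref{thm6} (with $C=1$, as $\theta(0)=1$) the function $\Theta$ solves the bulk equation in $(\ref{extPu})$ with Dirichlet datum $1$, and by property $(E4)$, since $(-\Delta_{x}+m^{2})^{s}1=m^{2s}$ and $\kappa_{s}=1$, its conormal derivative is $-\lim_{\xi\to 0^{+}}\xi^{1-2s}\partial_{\xi}\Theta=m^{2s}$; equivalently, $\Theta$ is a weak solution of $(\ref{P**})$ with constant datum $m^{2s}$, namely
\begin{equation*}
\iint_{\mathcal{S}_{2\pi}}\xi^{1-2s}\bigl(\nabla\Theta\cdot\nabla\phi+m^{2}\Theta\,\phi\bigr)\,dx\,d\xi=m^{2s}\int_{\partial^{0}\mathcal{S}_{2\pi}}\T(\phi)\,dx\qquad\text{for every }\phi\in\X.
\end{equation*}
Since $g$ is bounded, $2\pi$-periodic in $x$ and supported in $\{\xi\le r\}$, and $\Theta$ is smooth and exponentially decaying, the function $\phi=g^{2}\Theta$ lies in $\X$ and is thus an admissible test function; as $\T(g^{2}\Theta)=g(\cdot,0)^{2}\Theta(0)=\T(z_{\e})^{2}$, inserting it above shows that the second integral equals $m^{2s}\int_{\partial^{0}\mathcal{S}_{2\pi}}\T(z_{\e})^{2}\,dx=m^{2s}|\T(z_{\e})|_{2}^{2}$. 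Therefore
\begin{equation*}
\|z_{\e}\|_{\X}^{2}-m^{2s}|\T(z_{\e})|_{2}^{2}=\iint_{\mathcal{S}_{2\pi}}\xi^{1-2s}\,\theta(m\xi)^{2}\,|\nabla g|^{2}\,dx\,d\xi.
\end{equation*}

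Two elementary estimates then yield $(\ref{important1})$. First, $0<\theta(m\xi)\le\theta(0)=1$, because $\theta(\xi)=\frac{2}{\Gamma(s)}(\xi/2)^{s}K_{s}(\xi)$ is positive and nonincreasing on $[0,\infty)$ (see \cite{Erd}); hence the weight $\theta(m\xi)^{2}$ may be dropped. Second, as $(\varphi_{\e})_{\Pi}$ depends only on $\xi$ we have $\nabla_{x}g=\nabla_{x}\varphi_{\e}$, while $\partial_{\xi}g(\cdot,\xi)=\partial_{\xi}\varphi_{\e}(\cdot,\xi)-\frac{1}{(2\pi)^{N}}\int_{(-\pi,\pi)^{N}}\partial_{\xi}\varphi_{\e}(x,\xi)\,dx$ is obtained from $\partial_{\xi}\varphi_{\e}(\cdot,\xi)$ by removing its orthogonal projection onto the constants in $L^{2}(-\pi,\pi)^{N}$, so that $\int_{(-\pi,\pi)^{N}}|\partial_{\xi}g|^{2}\,dx\le\int_{(-\pi,\pi)^{N}}|\partial_{\xi}\varphi_{\e}|^{2}\,dx$ for a.e.\ $\xi>0$; multiplying by $\xi^{1-2s}$, integrating and using $|\nabla_{x}g|=|\nabla_{x}\varphi_{\e}|$ gives $\iint_{\mathcal{S}_{2\pi}}\xi^{1-2s}|\nabla g|^{2}\,dx\,d\xi\le\iint_{\mathcal{S}_{2\pi}}\xi^{1-2s}|\nabla\varphi_{\e}|^{2}\,dx\,d\xi$. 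Chaining these two bounds with the identity just obtained proves $(\ref{important1})$.

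The step I expect to require the most care is the middle one: recognizing $\theta(m\xi)$ as the periodic extension of the constant $1$ and pinning down the exact value $m^{2s}$ of its conormal datum — which is precisely where the normalization $\kappa_{s}=1$ enters — and checking that $g^{2}\theta(m\xi)$ is a legitimate element of $\X$, so that it may be used as a test function in the weak formulation of $(\ref{P**})$. The remaining pieces, namely the algebraic expansion of $\|z_{\e}\|_{\X}^{2}$ and the two comparison inequalities, are routine.
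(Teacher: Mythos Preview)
Your proof is correct and arrives at exactly the same intermediate identity as the paper, namely
\[
\|z_{\e}\|_{\X}^{2}-m^{2s}|\T(z_{\e})|_{2}^{2}=\iint_{\mathcal{S}_{2\pi}}\xi^{1-2s}\theta^{2}(m\xi)\bigl[|\nabla_{x}\varphi_{\e}|^{2}+|\partial_{\xi}\varphi_{\e}-(\partial_{\xi}\varphi_{\e})_{\Pi}|^{2}\bigr]\,dx\,d\xi,
\]
and the two closing estimates ($0<\theta\le 1$ and the orthogonal-projection bound for $\partial_{\xi}g$) are identical to the paper's. The only difference is in how this identity is reached: the paper expands all terms explicitly and then performs an integration by parts in $\xi$, invoking the ODE $\theta''+\tfrac{1-2s}{\xi}\theta'=\theta$ together with the boundary values $\theta(0)=1$, $\theta(\infty)=0$ and $-\xi^{1-2s}\theta'(\xi)\to\kappa_{s}$ to cancel the unwanted terms by hand. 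Your route packages that same computation into the single observation that $\Theta=\theta(m\xi)$ is the weak solution of the extension problem with conormal datum $m^{2s}$, and then tests against $\phi=g^{2}\Theta$. This is more conceptual and shorter; the paper's version has the minor advantage of making the role of the Bessel ODE for $\theta$ visible. Either way the argument is the same in substance.
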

\begin{proof}
Firstly, we can see that
\begin{align*}
&\nabla_{x}z_{\e}=\nabla_{x}\varphi_{\e} \,\theta(m\xi)\\
&\partial_{\xi}z_{\e}=[\partial_{\xi}\varphi_{\e} - (\partial_{\xi}\varphi_{\e})_{\Pi}]\theta(m\xi)+[\varphi_{\e}-(\varphi_{\e})_{\Pi}]\theta'(m\xi)m. 
\end{align*}
Hence we get
\begin{align*}
&\|z_{\e}\|_{\X}^{2}- m^{2s} |\T(z_{\e})|_{2}^{2} = \\
&=\iint_{\mathcal{S}_{2\pi}} \xi^{1-2s} [\, |\nabla_{x}\varphi_{\e}|^{2} \theta^{2}(m\xi) + |\partial_{\xi} \varphi_{\e} - (\partial_{\xi} \varphi_{\e})_{\Pi}|^{2} \theta^{2}(m\xi) +m^{2} |\varphi_{\e} - (\varphi_{\e})_{\Pi}|^{2}(\theta'(m\xi))^{2} \\
&+ 2m \theta'(m\xi) \theta(m\xi) [\varphi_{\e} - (\varphi_{\e})_{\Pi}]\partial_{\xi}[\varphi_{\e} -(\varphi_{\e})_{\Pi}] + m^{2} |\varphi_{\e}- (\varphi_{\e})_{\Pi}|^{2} \theta^{2}(m\xi)\, ] \,dxd\xi \\
&- m^{2s} \int_{\partial^{0}\mathcal{S}_{2\pi}} |\T(\varphi_{\e}) - (\T(\varphi_{\e}))_{\Pi}|^{2} dx.
\end{align*}
Taking into account $\theta''+\frac{1-2s}{\xi}\theta'=\theta$, $\theta(0)=1$, $\theta(\infty)=0$ and $-\xi^{1-2s}\theta'(\xi)\rightarrow \kappa_{s}\equiv 1$ as $\xi\rightarrow 0^{+}$, we have
\begin{align*}
&\iint_{\mathcal{S}_{2\pi}} \xi^{1-2s} \Bigl[\, m^{2} |\varphi_{\e} - (\varphi_{\e})_{\Pi}|^{2}(\theta'(m\xi))^{2} + 2m \theta'(m\xi) \theta(m\xi) [\varphi_{\e} - (\varphi_{\e})_{\Pi}]\partial_{\xi}[\varphi_{\e} -(\varphi_{\e})_{\Pi}]\\
& + m^{2} |\varphi_{\e}- (\varphi_{\e})_{\Pi}|^{2} \theta^{2}(m\xi)\, \Bigr] \,dxd\xi - m^{2s} \int_{\partial^{0}\mathcal{S}_{2\pi}} |\T(\varphi_{\e}) - (\T(\varphi_{\e}))_{\Pi}|^{2} dx\\
&=\iint_{\mathcal{S}_{2\pi}} \xi^{1-2s} \Bigl[\, m^{2} |\varphi_{\e} - (\varphi_{\e})_{\Pi}|^{2}(\theta'(m\xi))^{2}+ m^{2} |\varphi_{\e}- (\varphi_{\e})_{\Pi}|^{2} \theta^{2}(m\xi)\, \Bigr] \,dxd\xi \\
&+\iint_{\mathcal{S}_{2\pi}} \xi^{1-2s} \partial_{\xi} (|\varphi_{\e} - (\varphi_{\e})_{\Pi}|^{2}) m \theta(m\xi) \theta'(m\xi) \, dxd\xi - m^{2s} \int_{\partial^{0}\mathcal{S}_{2\pi}} |\T(\varphi_{\e}) - (\T(\varphi_{\e}))_{\Pi}|^{2} dx\\
&=\iint_{\mathcal{S}_{2\pi}} \xi^{1-2s} \Bigl[\, m^{2} |\varphi_{\e} - (\varphi_{\e})_{\Pi}|^{2}(\theta'(m\xi))^{2}+ m^{2} |\varphi_{\e}- (\varphi_{\e})_{\Pi}|^{2} \theta^{2}(m\xi)\, \Bigr] \,dxd\xi \\
&+ m^{2s} \int_{\partial^{0}\mathcal{S}_{2\pi}} |\T(\varphi_{\e}) - (\T(\varphi_{\e}))_{\Pi}|^{2} dx - \iint_{\mathcal{S}_{2\pi}} |\varphi_{\e} - (\varphi_{\e})_{\Pi}|^{2} \Bigl[(1-2s)\xi^{-2s}m \theta(m\xi)\theta'(m\xi)  \\
&+ \xi^{1-2s} m^{2}(\theta'(m\xi))^{2}+\xi^{1-2s}m^{2}\theta(m\xi) \theta''(m\xi)\Bigr]\, dxd\xi -m^{2s} \int_{\partial^{0}\mathcal{S}_{2\pi}} |\T(\varphi_{\e}) - (\T(\varphi_{\e}))_{\Pi}|^{2} dx\\
&=\iint_{\mathcal{S}_{2\pi}} \xi^{1-2s} [\, m^{2} |\varphi_{\e} - (\varphi_{\e})_{\Pi}|^{2}(\theta'(m\xi))^{2}+ m^{2} |\varphi_{\e}- (\varphi_{\e})_{\Pi}|^{2} \theta^{2}(m\xi)\, ] \,dxd\xi \\
&-\iint_{\mathcal{S}_{2\pi}} |\varphi_{\e} - (\varphi_{\e})_{\Pi}|^{2} [\xi^{1-2s} m^{2}(\theta'(m\xi))^{2}+\xi^{1-2s}m^{2}\theta^{2}(m\xi)]\, dxd\xi =0.
\end{align*}
As a consequence
\begin{align}\label{important2}
\|z_{\e}\|_{\X}^{2}- m^{2s} |\T(z_{\e})|_{2}^{2}=\iint_{\mathcal{S}_{2\pi}} \xi^{1-2s} \theta^{2}(m\xi) [\, |\nabla_{x}\varphi_{\e}|^{2} + |\partial_{\xi} \varphi_{\e} - (\partial_{\xi} \varphi_{\e})_{\Pi}|^{2} ]\, dxd\xi.
\end{align}
Since
\begin{align*}
\int_{\partial^{0}\mathcal{S}_{2\pi}}|\partial_{\xi} \varphi_{\e} - (\partial_{\xi} \varphi_{\e})_{\Pi}|^{2} \, dx &=\int_{\partial^{0}\mathcal{S}_{2\pi}} |\partial_{\xi} \varphi_{\e}|^{2}\, dx+\int_{\partial^{0}\mathcal{S}_{2\pi}}| (\partial_{\xi} \varphi_{\e})_{\Pi}|^{2}\, dx-2 (\partial_{\xi} \varphi_{\e})_{\Pi} \int_{\partial^{0}\mathcal{S}_{2\pi}} \partial_{\xi} \varphi_{\e}\, dx \\
&= \int_{\partial^{0}\mathcal{S}_{2\pi}} [|\partial_{\xi} \varphi_{\e}|^{2} - |(\partial_{\xi} \varphi_{\e})_{\Pi}|^{2} ]\, dx,
\end{align*}
we can deduce that 
\begin{align}\label{important3}
&\iint_{\mathcal{S}_{2\pi}} \xi^{1-2s} \theta^{2}(m\xi) [\, |\nabla_{x}\varphi_{\e}|^{2} + |\partial_{\xi} \varphi_{\e} - (\partial_{\xi} \varphi_{\e})_{\Pi}|^{2} ]\, dxd\xi \nonumber \\
&= \iint_{\mathcal{S}_{2\pi}} \xi^{1-2s} \theta^{2}(m\xi) [\, |\nabla_{x}\varphi_{\e}|^{2} +  [|\partial_{\xi} \varphi_{\e}|^{2} - |(\partial_{\xi} \varphi_{\e})_{\Pi}|^{2} ]\, dxd\xi \nonumber \\ 
&\leq \iint_{\mathcal{S}_{2\pi}} \xi^{1-2s} \theta^{2}(m\xi) [\, |\nabla_{x}\varphi_{\e}|^{2} + |\partial_{\xi} \varphi_{\e}|^{2} ]\, dxd\xi \nonumber \\ 
&\leq \iint_{\mathcal{S}_{2\pi}} \xi^{1-2s} [\, |\nabla_{x}\varphi_{\e}|^{2} + |\partial_{\xi} \varphi_{\e}|^{2} ]\, dxd\xi=\iint_{\mathcal{S}_{2\pi}} \xi^{1-2s}  |\nabla \varphi_{\e}|^{2} \, dxd\xi,
\end{align}
where in the last inequality we have used the fact that $0<\theta(t)\leq 1$ \cite{Erd}.
Putting together (\ref{important2}) and (\ref{important3}) we deduce that (\ref{important1}) holds.

\end{proof}

\begin{lem}\label{Linking2}
Let $Q_{\e}:=\{y+t z_{\e}: y\in \Y, t\geq 0\}$ be the set defined as in Lemma \ref{lemma2}.
Then we have 
$$
\max_{v \in Q_{\e}} \J_{m}(v)<\frac{s}{N} W(0)^{-\frac{N-2s}{2s}}S_{*}^{\frac{N}{2s}}.
$$
\end{lem}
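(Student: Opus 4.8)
The plan is to run the Brezis--Nirenberg scheme: I will show that, up to remainders vanishing as $\e\to0$, $\max_{v\in Q_{\e}}\mathcal{J}_{m}(v)$ is bounded by the critical threshold $c^{*}:=\frac{s}{N}\,W(0)^{-\frac{N-2s}{2s}}S_{*}^{\frac{N}{2s}}$, and then use assumption $(f6)$ to see that the subcritical term $F$ brings the level strictly below $c^{*}$. To find the shape of $\mathcal{J}_{m}$ on $Q_{\e}$, write $v=y+t z_{\e}$ with $y\in\Y=\langle\theta(m\xi)\rangle$ and $t\ge0$. Since $\int_{\partial^{0}\mathcal{S}_{2\pi}}\T(z_{\e})\,dx=0$ by construction of $(\varphi_{\e})_{\Pi}$, we have $z_{\e}\in\Z$, so $y$ and $z_{\e}$ are orthogonal in $\X$ and in $L^{2}(-\pi,\pi)^{N}$; together with $\|y\|_{\X}^{2}-m^{2s}|\T(y)|_{2}^{2}=0$ (from $(\ref{eqY})$, since $y$ is a multiple of $\theta(m\xi)$) this gives $\|v\|_{\X}^{2}-m^{2s}|\T(v)|_{2}^{2}=t^{2}\bigl(\|z_{\e}\|_{\X}^{2}-m^{2s}|\T(z_{\e})|_{2}^{2}\bigr)$, which by Lemma $\ref{lemimportant}$ and Lemma $\ref{lemma1}$(i) is $\le t^{2}\bigl(S_{*}^{N/2s}+O(\e^{N-2s})\bigr)$. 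For the critical term I would apply Lemma $\ref{lemma2}$ to peel off $-\frac{t^{2^{*}_{s}}}{2^{*}_{s}}|\T(z_{\e})|_{2^{*}_{s},W}^{2^{*}_{s}}$, a nonpositive term $-\frac{1}{2\cdot 2^{*}_{s}}|\T(y)|_{2^{*}_{s},W}^{2^{*}_{s}}$ (which I discard), and an error $\frac{K_{5}}{2^{*}_{s}}t^{2^{*}_{s}}\e^{\frac{N(N-2s)}{N+2s}}$, and then bound $|\T(z_{\e})|_{2^{*}_{s},W}^{2^{*}_{s}}\ge W(0)S_{*}^{N/2s}-O(\e^{2s})$ via $(\ref{CY2star})$, Lemma $\ref{lemma1}$(ii) and $(W2)$ (here $N\ge 4s$ ensures the $O(\e^{2s})$ coming from $(W2)$ absorbs the $O(\e^{N-2s})$ coming from $(\ref{CY2star})$). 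Since $F\ge0$ by $(f5)$, all this gives, for every $v=y+t z_{\e}\in Q_{\e}$,
$$
\mathcal{J}_{m}(v)\le g_{\e}(t)-\int_{\partial^{0}\mathcal{S}_{2\pi}}F(x,\T(v))\,dx,\qquad g_{\e}(t):=\frac{t^{2}}{2}A_{\e}-\frac{t^{2^{*}_{s}}}{2^{*}_{s}}B_{\e},
$$
with $A_{\e}=S_{*}^{N/2s}+O(\e^{N-2s})\to S_{*}^{N/2s}$ and $B_{\e}=W(0)S_{*}^{N/2s}-O(\e^{2s})-K_{5}\e^{\frac{N(N-2s)}{N+2s}}\to W(0)S_{*}^{N/2s}$.

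Next I would maximise the one-variable model: elementarily, $\max_{t\ge0}g_{\e}(t)=\frac{s}{N}A_{\e}^{N/2s}B_{\e}^{-(N-2s)/2s}=c^{*}+o(1)$, attained at $t_{\e}:=(A_{\e}/B_{\e})^{(N-2s)/4s}\to W(0)^{-(N-2s)/4s}>0$, with $g_{\e}$ increasing on $[0,t_{\e}]$. Since $\mathcal{J}_{m}(v)\to-\infty$ on $Q_{\e}$ as $\|v\|_{\X}\to\infty$ (because of the $-\frac{t^{2^{*}_{s}}}{2^{*}_{s}}B_{\e}$ term and the discarded $-\frac{1}{2\cdot 2^{*}_{s}}|\T(y)|_{2^{*}_{s},W}^{2^{*}_{s}}$), the supremum over $Q_{\e}$ is attained at some $v_{\e}=y_{\e}+\tau_{\e}z_{\e}$. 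If $\mathcal{J}_{m}(v_{\e})\le c^{*}/2$ we are done; otherwise $g_{\e}(\tau_{\e})\ge\mathcal{J}_{m}(v_{\e})>c^{*}/2$, so the monotonicity of $g_{\e}$ on $[0,t_{\e}]$ (with $t_{\e}$ bounded away from $0$) confines $\tau_{\e}$ to a fixed compact subset of $(0,\infty)$, while the discarded term forces $|\T(y_{\e})|\le C$, uniformly for $\e$ small. Consequently, near the concentration point $\T(v_{\e})$ is comparable to $\tau_{\e}\T(\varphi_{\e})$ --- in particular $\T(v_{\e})\ge\frac12\tau_{\e}\T(\varphi_{\e})$ on the set where $\T(\varphi_{\e})$ exceeds a fixed large constant, a set contained in a fixed ball $B_{\delta}\subset\mathcal{A}$ --- so that $\int_{\partial^{0}\mathcal{S}_{2\pi}}F(x,\T(v_{\e}))\,dx\ge\int_{B_{\delta}}\bar{F}(\T(v_{\e}))\,dx$, and the change of variables $x=\e z$ identifies the right-hand side, up to positive constants, with $\e^{N}\int_{0}^{\e^{-1}}\bar{F}\bigl[(\frac{\e^{-1}}{1+t^{2}})^{\frac{N-2s}{2}}\bigr]t^{N-1}\,dt$. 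Collecting everything,
$$
\mathcal{J}_{m}(v_{\e})\le c^{*}+O(\e^{2s})+O(\e^{\frac{N(N-2s)}{N+2s}})-c_{0}\,\e^{N}\int_{0}^{\e^{-1}}\bar{F}\Bigl[\Bigl(\frac{\e^{-1}}{1+t^{2}}\Bigr)^{\frac{N-2s}{2}}\Bigr]t^{N-1}\,dt,
$$
and $(f6)$ --- whose weight $\e^{\min\{(N+2s)/2,\,p(N-2s)/2\}}$ is calibrated exactly so that this integral outgrows $\e^{-2s}$ and $\e^{-\frac{N(N-2s)}{N+2s}}$ --- forces the last term to dominate the remainders for small $\e$, whence $\mathcal{J}_{m}(v_{\e})<c^{*}$.

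The hard part is this last step, and it has two faces. First, one must know that the maximiser's $t$-coordinate $\tau_{\e}$ stays bounded away from $0$: otherwise $\T(v_{\e})$ does not blow up at the concentration point and $(f6)$ gives nothing --- this is why the preliminary observation ``if the maximum is positive, then $g_{\e}(\tau_{\e})>c^{*}/2$, so $\tau_{\e}$ is near $t_{\e}$'' is indispensable. Second, one must carefully reconcile the integral appearing in $(f6)$ with $\int_{B_{\delta}}\bar{F}(\T(v_{\e}))$ --- absorbing the normalising constant of $\varphi_{\e}$ inside $\bar{F}$, and controlling the mismatch between the cut-off radius and the upper limit $\e^{-1}$ (harmless because for $t$ large the integrand is evaluated at small arguments of $\bar{F}$) --- and then check that the resulting quantity decays more slowly than each remainder power; this is exactly the information packed into the (somewhat technical) hypothesis $(f6)$, the model case being $\bar{F}(t)=|t|^{p}$, as remarked right after its statement.
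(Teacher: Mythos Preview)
Your strategy is the same Brezis--Nirenberg scheme the paper runs, and most of the ingredients match: the orthogonal splitting of the quadratic form via Theorem~\ref{thm6}, the bound of Lemma~\ref{lemimportant}, Lemma~\ref{lemma2} for the critical term, and the concentration computation driving the $\bar F$-integral in $(f6)$. The implementation differs in two places. First, the paper normalises $|\T(v)|_{2^{*}_{s},W}=1$ and bounds $\max_{t\ge0}\mathcal{J}_{m}(tv)$ by the Rayleigh quotient (its inequality~(\ref{10})), whereas you bound $\mathcal{J}_{m}$ directly by the one-variable model $g_{\e}(t)$; this is cosmetic. Second --- and this is where the substance lies --- the paper isolates the $F$-contribution by an explicit splitting $\int F(x,\T(y+tz_{\e}))=\int F(x,\T(y))+\int F(x,t\T(z_{\e}))+O(\e^{(N-2s)/2})+O(\e^{N-p(N-2s)/2})$ (its estimates (\ref{18})--(\ref{19})), and then replaces $z_{\e}$ by $\varphi_{\e}$; you instead restrict directly to the concentration set. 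The paper's route is what manufactures the specific remainder exponents $\frac{N-2s}{2}$ and $N-\frac{p(N-2s)}{2}$, and these are precisely the exponents to which the weight $\e^{\min\{(N+2s)/2,\,p(N-2s)/2\}}$ in $(f6)$ is tuned (since $\e^{N}I\gg\e^{(N-2s)/2}$ means $\e^{(N+2s)/2}I\to\infty$, and $\e^{N}I\gg\e^{N-p(N-2s)/2}$ means $\e^{p(N-2s)/2}I\to\infty$).

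This is the gap in your write-up: your remainder exponents, $2s$ (from $(W2)$) and $\tfrac{N(N-2s)}{N+2s}$ (from Lemma~\ref{lemma2}), are \emph{not} the ones $(f6)$ is calibrated for, so your sentence ``$(f6)$ --- whose weight \dots\ is calibrated exactly so that this integral outgrows $\e^{-2s}$ and $\e^{-N(N-2s)/(N+2s)}$'' is not justified. Concretely, absorbing $O(\e^{2s})$ requires $\e^{N-2s}\int\bar F\to\infty$; for $N>6s$ one has $N-2s>\tfrac{N+2s}{2}$, so $(f6)$ does not deliver this. To close the gap you should carry out the $F$-splitting as in the paper (or otherwise arrange that your only remainders are $O(\e^{(N-2s)/2})$ and $O(\e^{N-p(N-2s)/2})$), after which your lower bound on $\int F$ near the concentration point --- which is correct as stated, including the argument that $\tau_{\e}$ stays in a compact of $(0,\infty)$ and $|\T(y_{\e})|$ stays bounded --- finishes the proof exactly as the paper's does.
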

\begin{proof}
Let us observe that for every $v\in \X-\{0\}$
\begin{align*}
\J_{m}(tv)&=\frac{t^{2}}{2} \left[\|v\|^{2}_{\X}-m^{2s}|\T(v)|_{2}^{2}\right]-\frac{t^{2^{*}_{s}}}{2^{*}_{s}}|\T(v)|^{2^{*}_{s}}_{2^{*}_{s}, W}-\int_{\partial^{0}\mathcal{S}_{2\pi}} F(x, t\T(v)) dx \\
&\leq \frac{t^{2}}{2} \left[\|v\|^{2}_{\X}-m^{2s}|\T(v)|_{2}^{2}\right]-\frac{t^{2^{*}_{s}}}{2^{*}_{s}}|\T(v)|^{2^{*}_{s}}_{2^{*}_{s}, W}\rightarrow -\infty \mbox{ as } t\rightarrow \infty.
\end{align*}
Then, we can find $t_{\e}\geq 0$ such that 
$$
\J_{m}(t_{\e} v)=\sup_{t\geq 0} \J_{m}(tv).
$$
We can assume that $t_{\e}>0$, and since it satisfies 
$$
t_{\e}\left[\|v\|^{2}_{\X}-m^{2s}|\T(v)|_{2}^{2}\right]-t_{\e}^{2^{*}_{s}-1}|\T(v)|^{2^{*}_{s}}_{2^{*}_{s}, W}-\int_{\partial^{0}\mathcal{S}_{2\pi}} \T(v)f(x, t_{\e}\T(v)) dx=0,
$$
we obtain
$$
t_{\e}\leq \left[\frac{\|v\|_{\X}^{2}- m^{2s} |\T(v)|_{2}^{2}}{|\T(v)|^{2^{*}_{s}}_{2^{*}_{s}, W}}\right]^{\frac{N-2s}{4s}}=:A.
$$
Let us note that the function
$$
t\mapsto \frac{t^{2}}{2}\left[\|v\|^{2}_{\X}-m^{2s}|\T(v)|_{2}^{2}\right]-\frac{t^{2^{*}_{s}}}{2^{*}_{s}}|\T(v)|_{2^{*}_{s}, W}^{2^{*}_{s}}
$$
is increasing in $[0, A]$, so we can deduce that
\begin{align}\label{10}
\mathcal{J}_{m}(t_{\e}v)\leq \frac{s}{N}\left[\frac{\|v\|_{\X}^{2}- m^{2s} |\T(v)|_{2}^{2}}{|\T(v)|^{2}_{2^{*}_{s}, W}}\right]^{\frac{N}{2s}}- \int_{\partial^{0}\mathcal{S}_{2\pi}} F(x, t_{\e}\T(v)) dx.
\end{align}
Now, fix $v=y+t z_{\e}\in Q_{\e}$ such that $|\T(v)|_{2^{*}_{s}, W}=1$. Hence, by using Theorem \ref{thm6}, we can see that 
\begin{align}\label{monster}
\|v\|_{\X}^{2}- m^{2s} |\T(v)|_{2}^{2}&=\| y\|_{\X}^{2}- m^{2s} |\T(y)|^{2}_{2}+t^{2}[\| z_{\e}\|_{\X}^{2}- m^{2s} |\T(z_{\e})|^{2}_{2}]\nonumber \\
&=t^{2}[\| z_{\e}\|_{\X}^{2}- m^{2s} |\T(z_{\e})|^{2}_{2}] \nonumber \\
&=\frac{\| z_{\e}\|_{\X}^{2}- m^{2s} |\T(z_{\e})|^{2}_{2}}{| \T(z_{\e})|_{2^{*}_{s}, W}^{2}}\, |t \T(z_{\e})|_{2^{*}_{s}, W}^{2}. 
\end{align}
By using (\ref{CY2star}), we get
\begin{equation}\label{mediafi1}
\left|\int_{\partial^{0}\mathcal{S}_{2\pi}} W(x)(|\T(z_{\e})|^{2^{*}_{s}} - |\T(\varphi_{\e})|^{2^{*}_{s}})\, dx \right|\leq C \e^{N-2s},
\end{equation}
which together with $N-2s\geq  2s$ and Lemma \ref{lemma1}-(ii), yields
\begin{align}\label{mediafi2}
|\T(z_{\e})|^{2}_{2^{*}_{s}, W}&=(|\T(z_{\e})|^{2^{*}_{s}}_{2^{*}_{s}, W})^{\frac{2}{2^{*}_{s}}}=(|\T(\varphi_{\e})|^{2^{*}_{s}}_{2^{*}_{s}, W}+O(\e^{N-2s}))^{\frac{N-2s}{N}} \nonumber\\
&=(W(0)S_{*}^{\frac{N}{2s}}+O(\e^{N})+O(\e^{2s})+O(\e^{N-2s}))^{\frac{N-2s}{N}} \nonumber\\
&=W(0)^{\frac{N-2s}{N}}S_{*}^{\frac{N-2s}{2s}}+O(\e^{2s \frac{(N-2s)}{N}}).
\end{align}
On the other hand, in view of Lemma \ref{lemimportant}, we know that
\begin{align}\label{important11}
\|z_{\e}\|_{\X}^{2}- m^{2s} |\T(z_{\e})|_{2}^{2}\leq \iint_{\mathcal{S}_{2\pi}}\xi^{1-2s} |\nabla \varphi_{\e}|^{2} dx d\xi.
\end{align}
Thus, putting together (\ref{monster}), Lemma \ref{lemma1}-(i), (\ref{mediafi2}) and (\ref{important11}), we can see that
\begin{align}\label{14}
\|v\|_{\X}^{2}- m^{2s} |\T(v)|_{2}^{2} \leq [W(0)^{-\frac{N-2s}{N}}S_{*}+O(\e^{2s\frac{(N-2s)}{N}})] |t \T(z_{\e})|_{2^{*}_{s}, W}^{2}. 
\end{align}
Now, by using Lemma \ref{lemma2}, $|\T(v)|_{2^{*}_{s}, W}=1$ and (\ref{mediafi1}), we have
\begin{align}\label{15}
1=|\T(v)|_{2^{*}_{s}, W}^{2^{*}_{s}}  &\geq |t \T(z_{\e})|^{2^{*}_{s}}_{2^{*}_{s}, W}+\frac{1}{2} |\T(y)|^{2^{*}_{s}}_{2^{*}_{s}, W} -K_{5} t^{2^{*}_{s}} \e^{\frac{N(N-2s)}{N+2s}} \nonumber \\
&\geq t^{2^{*}_{s}} |\varphi_{\e}(\cdot, 0)|^{2^{*}_{s}}_{2^{*}_{s}, W}-C_{5} t^{2^{*}_{s}}\e^{N-2s}+\frac{1}{2} |\T(y)|^{2^{*}_{s}}_{2^{*}_{s}, W}-K_{5} t^{2^{*}_{s}} \e^{\frac{N(N-2s)}{N+2s}},
\end{align}
which implies that $t$ is bounded.

We distinguish two cases.
Firstly, we suppose that $|\T(y)|_{2^{*}_{s}, W}^{2^{*}_{s}}\leq 2K_{5} t^{2^{*}_{s}} \e^{N\frac{(N-2s)}{N+2s}}$.  Then by (\ref{4.11}) in Lemma \ref{lemma2}, we have the following estimate
\begin{align*}
|t\T(z_{\e})|_{2^{*}_{s}, W}^{2^{*}_{s}}\leq 1-\frac{3}{4}|\T(y)|_{2^{*}_{s}, W}^{2^{*}_{s}}+K_{6}(\e^{\frac{N-2s}{2}}|\T(y)|_{2}+\e^{N})
\end{align*}
which yields
\begin{align}\label{16}
|t\T(z_{\e})|_{2^{*}_{s}, W}^{2}&\leq \left[1-\frac{3}{4}|\T(y)|_{2^{*}_{s}, W}^{2^{*}_{s}}+K_{7}(\e^{\frac{N-2s}{2}}|\T(y)|_{2}+\e^{N})\right]^{\frac{2}{2^{*}_{s}}} \nonumber\\
&\leq 1+K_{8} (\e^{\frac{N-2s}{2}}|\T(y)|_{2}+\e^{N}) \nonumber\\
&\leq 1+K_{9}\e^{N\frac{(N-2s)}{N+2s}},
\end{align}
where in the last inequality we used H\"older inequality, Young's inequality with exponents $\frac{2^{*}_{s}}{2^{*}_{s}-1}$ and $2^{*}_{s}$, $|\T(y)|_{2^{*}_{s}, W}^{2^{*}_{s}}\leq 2K_{5} t^{2^{*}_{s}} \e^{N\frac{(N-2s)}{N+2s}}$ and the boundedness of $t$ to infer that 
$$
\e^{\frac{N-2s}{2}}|\T(y)|_{2}\leq C\e^{\frac{N-2s}{2}}|\T(y)|_{2^{*}_{s}, W}\leq C'\e^{\frac{N(N-2s)}{N+2s}}+C''\e^{\frac{N(N-2s)}{N+2s}}=C'''\e^{\frac{N(N-2s)}{N+2s}}.
$$.\\
If we assume that $|\T(y)|_{2^{*}_{s}, W}^{2^{*}_{s}}> 2K_{5} t^{2^{*}_{s}} \e^{N\frac{(N-2s)}{N+2s}}$, from the inequality (\ref{15}), we deduce easily that
\begin{align}\label{17}
|t\T(z_{\e})|_{2^{*}_{s}, W}^{2^{*}_{s}}\leq 1.
\end{align}
As a consequence of (\ref{16}) and (\ref{17}), we get 
\begin{align}\label{17bis}
|t\T(z_{\e})|_{2^{*}_{s}, W}^{2^{*}_{s}}\leq 1+K_{9}\e^{N\frac{(N-2s)}{N+2s}}.
\end{align}
Since $t_{\e}$ satisfies 
\begin{align*}
\left[\|y+t z_{\e}\|^{2}_{\X}-m^{2s}|\T(y+t z_{\e})|_{2}^{2}\right]&-t_{\e}^{2^{*}_{s}-2}|\T(y+tz_{\e})|^{2^{*}_{s}}_{2^{*}_{s}, W} \\
&-\int_{\partial^{0}\mathcal{S}_{2\pi}} \frac{\T(y+t z_{\e})f(x, \T(y+t_{\e}z_{\e}))}{t_{\e}} dx=0,
\end{align*}
we obtain
\begin{equation}\label{17Bbis}
\lim_{\e\rightarrow 0} \left[\|y+t z_{\e}\|^{2}_{\X}-m^{2s}|\T(y+t z_{\e})|_{2}^{2}\right]\geq \lim_{\e\rightarrow 0} t_{\e}^{2^{*}_{s}-2}.
\end{equation}
Taking into account (\ref{14}), (\ref{17bis}) and (\ref{17Bbis}), we can infer that
$$
\lim_{\e\rightarrow 0} t_{\e}^{2^{*}_{s}-2} \leq W(0)^{-\frac{N-2s}{N}}S_{*},
$$
that is $t_{\e}$ is bounded for any $\e>0$ small enough. Hence, we may assume that $t_{\e}\rightarrow t_{0}\geq 0$ as $\e\rightarrow 0$. If $t_{0}=0$, we have finished. Thus, we suppose that $t_{0}>0$.

Now, we estimate the integral involving $F$. 
Then, we have
\begin{align}\begin{split}\label{18}
&\left| \int_{\partial^{0}\mathcal{S}_{2\pi}} F(x, \T(y+t z_{\e})) \, dx - \int_{\partial^{0}\mathcal{S}_{2\pi}} F(x,\T(y))\, dx -\int_{\partial^{0}\mathcal{S}_{2\pi}} F(x, t \T(z_{\e}))\, dx \right|\\
&= \left| \int_{\partial^{0}\mathcal{S}_{2\pi}} \left[ \int_{0}^{t\T(z_{\e})} f(x, \T(y)+\tau)\, d\tau - \int_{0}^{t\T(z_{\e})} f(x, \tau)\, d\tau\right]\, dx\right|\\
&\leq K_{10}\left[ \int_{\partial^{0}\mathcal{S}_{2\pi}} |(t\T(z_{\e}))|(1+|\T(y)+t\T(z_{\e})|^{p-1}) \, dx + \int_{\partial^{0}\mathcal{S}_{2\pi}} |(t\T(z_{\e}))| (1+|t\T(z_{\e})|^{p-1}) \, dx\right]\\
&\leq K_{10} \left[ \int_{\partial^{0}\mathcal{S}_{2\pi}} (|\T(y)|^{p-1} |t\T(z_{\e})| + |t\T(z_{\e})| + |t\T(z_{\e})|^{p})\, dx \right]. 
\end{split}\end{align}
It is clear that the condition $|\T(y+t z_{\e})|_{2^{*}_{s}}=1$ implies that $|\T(y)|_{\infty}$ is uniformly bounded.  
Arguing as in Lemma \ref{lemma2} (see formula \eqref{CY2star} there), we can see that it holds 
\begin{align*}
\left| \int_{\partial^{0}\mathcal{S}_{2\pi}} (|\T(z_{\e})|^{p}- |\T(\varphi_{\e})|^{p})\, dx \right| &\leq K_{11} (|\T(\varphi_{\e})|_{p-1}^{p-1} |(\T(\varphi_{\e}))_{\Pi}|_{\infty} + |(\T(\varphi_{\e}))_{\Pi}|_{p}^{p})\\
&\leq K_{12}(\e^{N- \frac{(N-2s)(p-1)}{2}} \e^{\frac{N-2s}{2}} + \e^{\frac{p(N-2s)}{2}})\\
&\leq O(\e^{\frac{N-2s}{2}}). 
\end{align*}
This and (\ref{18}) yield
\begin{equation}\label{19}
\left|\int_{\partial^{0}\mathcal{S}_{2\pi}} [F(x, \T(v)) - F(x,\T(y))- F(x, t\T(z_{\e}))]\, dx \right| \leq K_{13} (\e^{\frac{N-2s}{2}} + \e^{N- \frac{p(N-2s)}{2}}).
\end{equation}
Putting together (\ref{10}), (\ref{14}), (\ref{17bis}) and (\ref{19}), it follows that
\begin{align}\begin{split}\label{20}
\mathcal{J}_{m}(t_{\e} (y+t z_{\e}))
&\leq \frac{s}{N} W(0)^{-\frac{N-2s}{2s}} S_{*}^{\frac{N}{2s}} +  O(\e^{N-2s})+O(\e^{\frac{N-2s}{2}}) + O(\e^{N-\frac{p(N-2s)}{2}}) -\int_{\partial^{0}\mathcal{S}_{2\pi}} F(x, t_{\e}\T(y))\, dx \\
&- \int_{\partial^{0}\mathcal{S}_{2\pi}} F(x, t_{\e} t \T(z_{\e}))\, dx\\
&\leq \frac{s}{N} W(0)^{-\frac{N-2s}{2s}} S_{*}^{\frac{N}{2s}} +  O(\e^{\frac{N-2s}{2}}) + O(\e^{N-\frac{p(N-2s)}{2}}) - \int_{\partial^{0}\mathcal{S}_{2\pi}} F(x, t_{\e} t \T(z_{\e}))\, dx. 
\end{split}\end{align}
Now, we observe that
\begin{align}\begin{split}\label{21}
\left| \int_{\partial^{0}\mathcal{S}_{2\pi}} F(x, t_{\e}t \T(z_{\e})) - F(x, t_{\e} t \T(\varphi_{\e}))\, dx \right|&\leq \int_{\partial^{0}\mathcal{S}_{2\pi}} \left|\int_{t_{\e} t\T(\varphi_{\e})}^{t_{\e}t\T(z_{\e})} f(x, \tau)\, d\tau \right| \, dx \\
&\leq K_{14}(|\T(z_{\e})|_{2}^{2} + |\T(z_{\e})|_{p}^{p})= O(\e^{\frac{N-2s}{2}}). 
\end{split}\end{align}
Hence, by using (\ref{20}), (\ref{21}), $(f6)$ and $t_{\e}\rightarrow t_{0}>0$, we get
\begin{align*}
\mathcal{J}_{m}(t_{\e} (y+t z_{\e}))&\leq \frac{s}{N} W(0)^{-\frac{N-2s}{2s}} S_{*}^{\frac{N}{2s}} + O(\e^{\frac{N-2s}{2}}) + O(\e^{N-\frac{p(N-2s)}{2}}) - \int_{\partial^{0}\mathcal{S}_{2\pi}} \overline{F}(t_{\e}t\T(z_{\e}))\, dx\\
&\leq \frac{s}{N} W(0)^{-\frac{N-2s}{2s}} S_{*}^{\frac{N}{2s}} + O(\e^{\frac{N-2s}{2}}) + O(\e^{N-\frac{p(N-2s)}{2}}) - \int_{B(0,R)} \overline{F}\left(\frac{C\e^{\frac{N-2s}{2}}}{(\e^2 +|x|^{2})^{\frac{N-2s}{2}}}\right)\, dx, 
\end{align*}
for some $C>0$ and $R>0$.
Since the assumption $(f6)$ implies that
\begin{align*}
\lim_{\e\rightarrow 0} \frac{1}{\e^{\frac{N-2s}{2}}} \int_{B(0,R)}\overline{F}\left( \frac{C\e^{\frac{N-2s}{2}}}{(\e^2 +|x|^{2})^{\frac{N-2s}{2}}}\right)\, dx=\infty
\end{align*}
and 
\begin{align*}
\lim_{\e\rightarrow 0} \frac{1}{\e^{N-\frac{p(N-2s)}{2}}} \int_{B(0,R)}\overline{F}\left(\frac{C\e^{\frac{N-2s}{2}}}{(\e^2 +|x|^{2})^{\frac{N-2s}{2}}}\right)\, dx=\infty,
\end{align*}
we can infer that
\begin{align*}
\mathcal{J}_{m}(t_{\e}(y+t z_{\e}))< \frac{s}{N}W(0)^{-\frac{N-2s}{2s}} S_{*}^{\frac{N}{2s}} . 
\end{align*}
This ends the proof of lemma.

\end{proof}

\noindent
To obtain the existence of a critical value of $\mathcal{J}_{m}$, we need to prove the Palais-Smale condition.
This condition will be satisfied for all $c \in \R$ such that $c<\frac{s}{N} W(0)^{-\frac{N-2s}{2s}} S_{*}^{\frac{N}{2s}}$,
where
$$
S_{*}=\inf \left\{ \frac{\iint_{\R^{N+1}_{+}} \xi^{1-2s} |\nabla v|^{2} dx d\xi}{\Bigl(\int_{\R^{N}} |v(x, 0)|^{2^{*}_{s}} \,dx\Bigr)^{\frac{2}{2^{*}_{s}}}}: v \in \mathcal{C}^{\infty}_{c}(\overline{\R^{N+1}_{+}}) \right \} 
$$
is the best constant of the fractional Sobolev embedding $H^{s}(\R^{N})$ into $L^{2^{*}_{s}}(\R^{N})$; see \cite{CT}.
\begin{lem}\label{psthm}
Let $c\in \R$ be such that $c<c^{*}:=\frac{s}{N} W(0)^{-\frac{N-2s}{2s}} S_{*}^{\frac{N}{2s}}$ and let $(v_{j})\subset \X$ be a sequence such that
\begin{equation}\label{pscs}
\mathcal{J}_{m}(v_{j})\rightarrow c \mbox{ and } \mathcal{J}_{m}'(v_{j})\rightarrow 0 \mbox{ as } j\rightarrow \infty. 
\end{equation}
Then $(v_{j})$  has a  strongly convergent subsequence in $\X$.
\end{lem}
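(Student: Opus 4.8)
The strategy is the classical Brezis--Nirenberg compactness analysis, adapted to the periodic cylinder $\mathcal{S}_{2\pi}$ and to the degeneracy of the quadratic part of $\J_{m}$ on $\Y$.

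\emph{Step 1: boundedness and extraction.} First I would check that $(v_{j})$ is bounded in $\X$. Writing $\theta=\mu$ when $\mu\le 2^{*}_{s}$ and $\theta=2^{*}_{s}$ otherwise (so $\theta\in(2,2^{*}_{s}]$ and $\theta\le\mu$), $(f5)$ makes the integral $\int_{\partial^{0}\mathcal{S}_{2\pi}}\bigl(\tfrac1\theta\,\T(v_{j})f(x,\T(v_{j}))-F(x,\T(v_{j}))\bigr)\,dx$ nonnegative, whence
\[
c+o(1)\,\|v_{j}\|_{\X}\ \ge\ \J_{m}(v_{j})-\tfrac{1}{\theta}\langle \J_{m}'(v_{j}),v_{j}\rangle\ \ge\ \Bigl(\tfrac12-\tfrac1\theta\Bigr)\bigl(\|v_{j}\|_{\X}^{2}-m^{2s}|\T(v_{j})|_{2}^{2}\bigr)+\Bigl(\tfrac1\theta-\tfrac{1}{2^{*}_{s}}\Bigr)|\T(v_{j})|_{2^{*}_{s},W}^{2^{*}_{s}} .
\]
Splitting $v_{j}=y_{j}+\tilde w_{j}$ with $y_{j}\in\Y$, $\tilde w_{j}\in\Z$, the degenerate form equals $\|\tilde w_{j}\|_{\X}^{2}-m^{2s}|\T(\tilde w_{j})|_{2}^{2}\ge C_{m}\|\tilde w_{j}\|_{\X}^{2}$ by $(\ref{eqnorm})$, so $\|\tilde w_{j}\|_{\X}$ is bounded; since the trace is constant on the finite-dimensional space $\Y$, $\|y_{j}\|_{\X}$ is in turn controlled by $|\T(v_{j})|_{2^{*}_{s}}$, and $\J_{m}(v_{j})=c+o(1)$ together with the bound on $|\T(v_{j})|_{2^{*}_{s},W}^{2^{*}_{s}}$ obtained above (or, when $\theta=2^{*}_{s}$, together with $\int F\ge 0$) yields a bound on $|\T(v_{j})|_{2^{*}_{s}}$, hence on $\|v_{j}\|_{\X}$. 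Up to a subsequence, $v_{j}\rightharpoonup v$ in $\X$, $\T(v_{j})\to\T(v)$ in $L^{q}(-\pi,\pi)^{N}$ for every $q<2^{*}_{s}$ and a.e.\ (Theorem \ref{thm2}), and $\T(v_{j})\rightharpoonup\T(v)$ in $L^{2^{*}_{s}}(-\pi,\pi)^{N}$.

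\emph{Step 2: the weak limit solves the problem.} Next I would let $j\to\infty$ in $\langle \J_{m}'(v_{j}),\phi\rangle=o(1)$ for each fixed $\phi\in\X$: the quadratic and the subcritical terms converge by weak convergence and the compact embedding (using $(f4)$), while $|\T(v_{j})|^{2^{*}_{s}-2}\T(v_{j})$ is bounded in $L^{2^{*}_{s}/(2^{*}_{s}-1)}(-\pi,\pi)^{N}$ and converges a.e., hence weakly, to $|\T(v)|^{2^{*}_{s}-2}\T(v)$, so the critical term passes to the limit against $W\,\T(\phi)\in L^{2^{*}_{s}}(-\pi,\pi)^{N}$. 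Thus $\J_{m}'(v)=0$; choosing $\phi=v$, using $(f5)$ and $\tfrac12-\tfrac{1}{2^{*}_{s}}=\tfrac{s}{N}$,
\[
\J_{m}(v)=\J_{m}(v)-\tfrac12\langle \J_{m}'(v),v\rangle=\frac{s}{N}\,|\T(v)|_{2^{*}_{s},W}^{2^{*}_{s}}+\int_{\partial^{0}\mathcal{S}_{2\pi}}\Bigl(\tfrac12\T(v)f(x,\T(v))-F(x,\T(v))\Bigr)\,dx\ \ge\ 0 .
\]

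\emph{Step 3: Brezis--Lieb and the energy gap.} Set $w_{j}:=v_{j}-v\rightharpoonup 0$ in $\X$. Since $\T(w_{j})\to 0$ in $L^{2}$ and in $L^{p}$, the terms $m^{2s}|\T(v_{j})|_{2}^{2}$, $\int F(x,\T(v_{j}))$ and $\int \T(v_{j})f(x,\T(v_{j}))$ converge to the corresponding quantities at $v$; moreover $\|v_{j}\|_{\X}^{2}=\|v\|_{\X}^{2}+\|w_{j}\|_{\X}^{2}+o(1)$ because $w_{j}\rightharpoonup 0$, and $|\T(v_{j})|_{2^{*}_{s},W}^{2^{*}_{s}}=|\T(v)|_{2^{*}_{s},W}^{2^{*}_{s}}+|\T(w_{j})|_{2^{*}_{s},W}^{2^{*}_{s}}+o(1)$ by the Brezis--Lieb lemma (valid for the finite weight $W\,dx$). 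Plugging these into $\J_{m}(v_{j})\to c$ and into $\langle \J_{m}'(v_{j}),v_{j}\rangle\to 0$ and using $\J_{m}'(v)=0$, I obtain
\[
\|w_{j}\|_{\X}^{2}-|\T(w_{j})|_{2^{*}_{s},W}^{2^{*}_{s}}\ \longrightarrow\ 0 ,\qquad \J_{m}(v)+\frac{s}{N}\,\|w_{j}\|_{\X}^{2}\ \longrightarrow\ c .
\]
Pass to a further subsequence with $\ell:=\lim_{j}\|w_{j}\|_{\X}^{2}\ge 0$; then $|\T(w_{j})|_{2^{*}_{s},W}^{2^{*}_{s}}\to\ell$ and $\J_{m}(v)+\tfrac{s}{N}\ell=c$. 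Since $w_{j}\rightharpoonup 0$, the mean of $\T(w_{j})$ vanishes in the limit, so Theorem \ref{tracethm}(ii) together with the sharp fractional Sobolev inequality (whose constant on $(-\pi,\pi)^{N}$ is the Euclidean $S_{*}$, cf.\ \cite{CT}), applied after subtracting that mean, and $W(0)=\max_{[-\pi,\pi]^{N}}W$, give
\[
\|w_{j}\|_{\X}^{2}\ \ge\ S_{*}\,|\T(w_{j})|_{2^{*}_{s}}^{2}+o(1)\ \ge\ S_{*}W(0)^{-\frac{2}{2^{*}_{s}}}\,|\T(w_{j})|_{2^{*}_{s},W}^{2}+o(1).
\]
Letting $j\to\infty$ yields $\ell\ge S_{*}W(0)^{-\frac{2}{2^{*}_{s}}}\ell^{\frac{2}{2^{*}_{s}}}$, so either $\ell=0$ or $\ell\ge S_{*}^{\frac{N}{2s}}W(0)^{-\frac{N-2s}{2s}}$. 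In the latter case $c=\J_{m}(v)+\tfrac{s}{N}\ell\ge \tfrac{s}{N}S_{*}^{\frac{N}{2s}}W(0)^{-\frac{N-2s}{2s}}=c^{*}$, contradicting $c<c^{*}$. Hence $\ell=0$, i.e.\ $w_{j}\to 0$ strongly in $\X$, and $v_{j}\to v$ in $\X$ along this subsequence, which is the claim.

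The step I expect to be the main obstacle is Step 3: one must carry out the Brezis--Lieb splitting of the \emph{critical trace term} on $\partial^{0}\mathcal{S}_{2\pi}$ and, crucially, identify the relevant Sobolev constant as the Euclidean $S_{*}$ --- the periodicity does not lower it, because concentrating bubbles like those of Lemma \ref{lemma1} do not feel it --- and one needs the nonnegativity $\J_{m}(v)\ge 0$ coming from $(f5)$; only with these does the compactness threshold come out as the stated $c^{*}=\frac{s}{N}W(0)^{-\frac{N-2s}{2s}}S_{*}^{\frac{N}{2s}}$. The degeneracy of the quadratic form on $\Y$ in Step 1 is a secondary technical point, handled through the decomposition $\X=\Y\oplus\Z$ and $(\ref{eqnorm})$.
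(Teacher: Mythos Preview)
Your proposal is correct and follows essentially the same Brezis--Nirenberg compactness scheme as the paper: boundedness of the Palais--Smale sequence, weak limit is a critical point with $\J_{m}(v)\ge 0$, Brezis--Lieb splitting of the norm and of the weighted critical term, and the dichotomy $\ell=0$ vs.\ $\ell\ge W(0)^{-\frac{N-2s}{2s}}S_{*}^{N/2s}$ ruled out by $c<c^{*}$.

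Two minor differences worth noting. For Step~1 the paper uses the simpler combination $\J_{m}(v_{j})-\tfrac12\langle\J_{m}'(v_{j}),v_{j}\rangle$, which kills the quadratic part entirely and gives directly $|\T(v_{j})|_{2^{*}_{s},W}^{2^{*}_{s}}\le C(1+\|v_{j}\|_{\X})$; then H\"older bounds $|\T(v_{j})|_{2}$ and $\int F$, and finally $\|v_{j}\|_{\X}^{2}=2\J_{m}(v_{j})+\ldots$ closes the estimate. This avoids the $\Y\oplus\Z$ decomposition (your argument via $(\ref{eqnorm})$ is valid but more involved, and the chain ``$\|\tilde w_{j}\|$ bounded $\Rightarrow\|y_{j}\|$ bounded'' needs a short growth comparison to break its apparent circularity). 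For Step~3 the paper makes the Sobolev constant step precise by invoking the perturbed inequality $|u|_{2^{*}_{s}}^{2}\le (S_{*}^{-1}+\varepsilon)|(-\Delta)^{s/2}u|_{2}^{2}+C_{\varepsilon}|u|_{2}^{2}$ on the torus (their formula (\ref{PSI}), from \cite{HZ}) together with $\T(w_{j})\to 0$ in $L^{2}$, which is exactly what underlies your heuristic ``the constant on $(-\pi,\pi)^{N}$ is the Euclidean $S_{*}$''.
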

\begin{proof}
By using $(f5)$, we have for $j$ large
\begin{align}\begin{split}\label{B}
&\frac{s}{N} W(0)^{-\frac{N-2s}{2s}} S_{*}^{\frac{N}{2s}}+1+\|v_{j}\|_{\X} \\
&\geq \mathcal{J}_{m}(v_{j})-\frac{1}{2} \langle \mathcal{J}_{m}'(v_{j}), v_{j}\rangle \\
&=\Bigl(\frac{1}{2}-\frac{1}{2^{*}_{s}}\Bigr) |\T(v_{j})|^{2^{*}_{s}}_{2^{*}_{s}, W} +\frac{1}{2}\int_{\partial^{0}\mathcal{S}_{2\pi}}  f(x, \T(v_{j})) \T(v_{j}) dx -\int_{\partial^{0}\mathcal{S}_{2\pi}}  F(x, \T(v_{j})) dx \\
& \geq \Bigl(\frac{1}{2}-\frac{1}{2^{*}_{s}}\Bigr) |\T(v_{j})|^{2^{*}_{s}}_{2^{*}_{s}, W}+\left(\frac{1}{2}-\frac{1}{\mu}\right)\int_{\partial^{0}\mathcal{S}_{2\pi}}  f(x, \T(v_{j})) \T(v_{j}) dx  \\
&\geq \Bigl(\frac{1}{2}-\frac{1}{2^{*}_{s}}\Bigr) |\T(v_{j})|^{2^{*}_{s}}_{2^{*}_{s},W}.
\end{split}\end{align}
Now, let us recall that $W(x)\geq \min_{x\in [-\pi, \pi]^{N}} W(x)>0$ in view of $(W1)$.
%

Then, by using H\"older inequality, we have the following estimate
\begin{align*}
\int_{\partial^{0}\mathcal{S}_{2\pi}} |\T(v_{j})|^{2} dx
\leq  |\partial^{0}\mathcal{S}_{2\pi}|^{\frac{2^{*}_{s}-2}{2^{*}_{s}}}  |\T(v_{j})|^{2}_{2^{*}_{s},W} \left(\min_{x\in[-\pi, \pi]^{N}} W(x)\right)^{-\frac{2}{2^{*}_{s}}},
\end{align*}
which together with (\ref{B}) yields
\begin{align*}
|\T(v_{j})|^{2^{*}_{s}}_{2}\leq c_{1} |\T(v_{j})|^{2^{*}_{s}}_{2^{*}_{s},W}\leq c_{2}(1+\|v_{j}\|_{\X}).
\end{align*}
On the other hand, by applying $(f5)$ and \eqref{B}, we can see that 
\begin{align*}
\int_{\partial^{0}\mathcal{S}_{2\pi}}  F(x, \T(v_{j})) dx\leq \mu^{-1} \int_{\partial^{0}\mathcal{S}_{2\pi}}  f(x, \T(v_{j})) \T(v_{j}) dx \leq c_{3}(1+\|v_{j}\|_{\X}).
\end{align*}
Summing up, we have 
\begin{align*}
\|v_{j}\|_{\X}^{2} &=2\mathcal{J}_{m}(v_{j})+m^{2s} |\T(v_{j})|^{2}_{2}+\frac{2}{2^{*}_{s}} |\T(v_{j})|^{2^{*}_{s}}_{2^{*}_{s}, W}+2 \int_{\partial^{0}\mathcal{S}_{2\pi}}  F(x, \T(v_{j})) dx \\
& \leq c_{4} +c_{5} (1+\|v_{j}\|_{\X})^{\frac{2}{2^{*}_{s}}}+c_{6}(1+\|v_{j}\|_{\X}),
\end{align*}
that is $(v_{j})_{j}$ is bounded in ${\mathbb{X}^{s}_{m}}$. 

By Theorem \ref{thm2}, we can extract a subsequence, which we denote again by $v_{j}$, such that
\begin{align}\begin{split}\label{conv}
v_{j} & \rightharpoonup v  \mbox{ in } \X \\
\T(v_{j}) & \rightarrow \T(v)   \mbox{ in } L^{q}(-\pi, \pi)^{N} \mbox{ for } q\in [1, 2^{*}_{s})\\
\T(v_{j})  & \rightarrow \T(v)   \mbox{ a.e. in }  (-\pi, \pi)^{N}.  
\end{split}\end{align}
Moreover, by using (\ref{conv}) and $(f2)$-$(f4)$, we can see that
\begin{equation}\label{pf1}
\int_{\partial^{0}\mathcal{S}_{2\pi}}  F(x, \T(v_{j})) dx\rightarrow \int_{\partial^{0}\mathcal{S}_{2\pi}}  F(x, \T(v)) dx,
\end{equation}
\begin{equation}\label{pf2}
\int_{\partial^{0}\mathcal{S}_{2\pi}}  f(x, \T(v_{j})) (\T(v_{j})-\T(v)) dx \rightarrow 0
\end{equation}
and
\begin{equation}\label{pf3}
\int_{\partial^{0}\mathcal{S}_{2\pi}}  f(x, \T(v)) (\T(v_{j})-\T(v)) dx \rightarrow 0
\end{equation}
as $j\rightarrow \infty$.

\noindent
Hence, for every $\phi \in \X$, we obtain that, as $j \rightarrow \infty$
\begin{align}
&\iint_{\mathcal{S}_{2\pi}} \xi^{1-2s} (\nabla v_{j} \nabla \phi+m^{2}v_{j} \phi) \,dxd\xi-m^{2s} \int_{\partial^{0}\mathcal{S}_{2\pi}} \T(v_{j}) \T(\phi) \,dx  \nonumber \\
& -\int_{\partial^{0}\mathcal{S}_{2\pi}} W(x) |\T(v_{j})|^{2^{*}_{s}-2}\T(v_{j}) \T(\phi) \,dx -\int_{\partial^{0}\mathcal{S}_{2\pi}}  f(x, \T(v_{j})) \T(\phi) dx \nonumber\\
&\rightarrow \iint_{\mathcal{S}_{2\pi}} \xi^{1-2s}(\nabla v \nabla \phi+m^{2}v \phi) \,dxd\xi-m^{2s} \int_{\partial^{0}\mathcal{S}_{2\pi}} \T(v) \T(\phi) \,dx  \nonumber \\
&-\int_{\partial^{0}\mathcal{S}_{2\pi}} W(x) |\T(v)|^{2^{*}_{s}-2}\T(v) \T(\phi) \,dx-\int_{\partial^{0}\mathcal{S}_{2\pi}}  f(x, \T(v)) \T(\phi) dx.
\end{align}
Since $\mathcal{J}_{m}'(v_{j}) \rightarrow 0$, we deduce that $\langle \mathcal{J}_{m}'(v), \phi \rangle=0$, for every $\phi \in \X$.
Choosing $\phi=v$, we have
\begin{align}
0=\|v\|^{2}_{\X}-m^{2s} |\T(v)|^{2}_{2} - |\T(v)|^{2^{*}_{s}}_{2^{*}_{s}, W} -\int_{\partial^{0}\mathcal{S}_{2\pi}}  f(x, \T(v)) \T(v) dx
\end{align}
and, by using $(f5)$, we get 
\begin{equation}
\mathcal{J}_{m}(v)\geq \Bigl(\frac{1}{2}-\frac{1}{2^{*}_{s}}\Bigr) |\T(v)|^{2^{*}_{s}}_{2^{*}_{s}, W} +\left(\frac{1}{2}-\frac{1}{\mu}\right)\int_{\partial^{0}\mathcal{S}_{2\pi}}  f(x, \T(v)) \T(v) dx\geq 0.
\end{equation}
By using Brezis-Lieb Lemma \cite{BL}, we can see that
\begin{align}\label{a2}
\|v_{j}\|_{\X}^{2} = \|v_{j}-v\|_{\X}^{2}+ \|v\|_{\X}^{2}+o(1)
\end{align}
and
\begin{align}\label{a1}
|\T(v_{j})|^{2^{*}_{s}}_{2^{*}_{s}, W} =|\T(v_{j})-\T(v)|^{2^{*}_{s}}_{2^{*}_{s}, W} + |\T(v)|^{2^{*}_{s}}_{2^{*}_{s}, W}+o(1).
\end{align}
Thus, by using (\ref{pf1}), (\ref{a2}), (\ref{a1}) and the fact that $\T(v_{j}) \rightarrow \T(v)$ in $L^{2}(-\pi,\pi)^{N}$, we have 
\begin{align}\label{C}
\mathcal{J}_{m}(v_{j})=\mathcal{J}_{m}(v)+\left[\frac{1}{2}\|v_{j}-v\|_{\X}^{2}-\frac{1}{2^{*}_{s}} |\T(v_{j})-\T(v)|^{2^{*}_{s}}_{2^{*}_{s}, W}\right]+o(1). 
\end{align}
Since $\T(v_{j}) \rightharpoonup \T(v)$ in $L^{2^{*}_{s}}(-\pi, \pi)^{N}$, we can see
\begin{align*}
\int_{\partial^{0}\mathcal{S}_{2\pi}} &W(x) (|\T(v_{j})|^{2^{*}_{s}-2} \T(v_{j})-|\T(v)|^{2^{*}_{s}-2} \T(v))(\T(v_{j})-\T(v)) \,dx  \nonumber \\
&=\int_{\partial^{0}\mathcal{S}_{2\pi}} W(x) (|\T(v_{j})|^{2^{*}_{s}}-|\T(v_{j})|^{2^{*}_{s}-2} \T(v_{j}) \T(v)) \,dx+o(1)  \nonumber \\
&=\int_{\partial^{0}\mathcal{S}_{2\pi}} W(x) (|\T(v_{j})|^{2^{*}_{s}}-|\T(v)|^{2^{*}_{s}})  \,dx+o(1) \nonumber \\
&=\int_{\partial^{0}\mathcal{S}_{2\pi}}W(x) |\T(v_{j})-\T(v)|^{2^{*}_{s}} \,dx+o(1),
\end{align*}
where we used \eqref{a1} in the last equality.
This, (\ref{pf2}) and (\ref{pf3}) yields
\begin{align*}
0&=\langle \mathcal{J}_{m}'(v_{j}), v_{j}-v \rangle \\
&=\langle \mathcal{J}_{m}'(v_{j})-\mathcal{J}_{m}'(v), v_{j}-v \rangle \\
&= \iint_{\mathcal{S}_{2\pi}} \xi^{1-2s}[|\nabla v_{j}-\nabla v|^{2}+m^{2}(v_{j}-v)^{2}] \,dxd\xi-\int_{\partial^{0}\mathcal{S}_{2\pi}} W(x) |\T(v_{j})-\T(v)|^{2^{*}_{s}} \,dx+o(1),
\end{align*}
that is 
\begin{equation}\label{A}
\|v_{j}-v\|^{2}_{\X}= |\T(v_{j})-\T(v)|^{2^{*}_{s}}_{2^{*}_{s}, W}+o(1).
\end{equation}
Taking into account $\mathcal{J}_{m}(v) \geq 0$  and (\ref{C}) we infer that 
\begin{align}
\frac{1}{2}\|v_{j}-v\|_{\X}^{2}-\frac{1}{2^{*}_{s}} |\T(v_{j})-\T(v)|^{2^{*}_{s}}_{2^{*}_{s}, W}&=\mathcal{J}_{m}(v_{j})-\mathcal{J}_{m}(v)+o(1) \nonumber \\
&\leq \mathcal{J}_{m}(v_{j})+o(1),
\end{align}
and by using (\ref{A}) and $c<\frac{s}{N} W(0)^{-\frac{N-2s}{2s}} S_{*}^{N/2s}$, we find
\begin{align}
\Bigl(\frac{1}{2}-\frac{1}{2^{*}_{s}}\Bigr)\|v_{j}-v\|_{\X}^{2} +o(1)\leq \mathcal{J}_{m}(v_{j})+o(1)=c<\frac{s}{N} W(0)^{-\frac{N-2s}{2s}} S_{*}^{N/2s}.
\end{align}
Since $\frac{1}{2}-\frac{1}{2^{*}_{s}}=\frac{s}{N}$, we get for all $j \geq j_{0}$
\begin{equation}\label{D1}
\|v_{j}-v\|_{\X}^{2}<W(0)^{-\frac{N-2s}{2s}} S_{*}^{N/2s}.
\end{equation}
Now, recalling (see \cite{HZ}) that for any $\varepsilon>0$ there exists $C_{\varepsilon}>0$ such that
\begin{equation}\label{PSI}
|u|^{2}_{L^{2^{*}_{s}}(-\pi, \pi)^{N}}\leq (S_{*}^{-1}+\varepsilon) |(-\Delta)^{\frac{s}{2}}u|^{2}_{L^{2}(-\pi, \pi)^{N}}+C_{\varepsilon} |u|^{2}_{L^{2}(-\pi, \pi)^{N}} \quad \, \forall u\in \h, 
\end{equation}
and by using Theorem \ref{tracethm}, (\ref{A}) and (\ref{conv}) (strong convergence in $L^{2}(-\pi, \pi)^{N}$), we have
\begin{align}\begin{split}\label{yomo}
\|v_{j}-v\|_{\X}^{2}&= |\T(v_{j})-\T(v)|^{2^{*}_{s}}_{2^{*}_{s}, W}+o(1)\\
&\leq W(0) S_{*}^{-\frac{2^{*}_{s}}{2}}  [\T(v_{j}-v)]^{2^{*}_{s}} \\
&\leq W(0) S_{*}^{-\frac{2^{*}_{s}}{2}}\|v_{j}-v\|_{\X}^{2^{*}_{s}}.
\end{split}\end{align}
Therefore, if $\|v_{j}-v\|_{\X}^{2}\rightarrow l>0$, then from (\ref{D1}) and (\ref{yomo}) we deduce that
$$
W(0)^{-\frac{N-2s}{2s}} S_{*}^{\frac{N}{2s}}>l\geq W(0)^{-\frac{N-2s}{2s}} S_{*}^{\frac{N}{2s}},
$$
that is a contradiction.
Therefore, we can deduce that $v_{j} \rightarrow v$ strongly in $\X$.

\end{proof}

\begin{lem}\label{Linking3}
Let $M_{\e}:=\{y+tz_{\e}: y\in \Y, \|y+tz_{\e}\|_{\X}\leq R, t\geq 0\}$. Then, there exists $R>\rho$ sufficiently large such that
$\sup_{\partial M_{\e}} \mathcal{J}_{m}= 0$.
\end{lem}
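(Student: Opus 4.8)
\emph{Proof proposal.} First note that $z_{\e}\in\Z$, since $\T(z_{\e})=\varphi_{\e}(\cdot,0)-(\varphi_{\e})_{\Pi}$ has zero average over $(-\pi,\pi)^{N}$; hence $z_{\e}$ is orthogonal to $\Y=\langle\theta(m\xi)\rangle$ in $\X$, and $Q_{\e}:=\Y\oplus\langle z_{\e}\rangle$ is a fixed finite-dimensional subspace. The boundary $\partial M_{\e}$ splits into a bottom part $\{t=0\}$, where $v=y\in\Y$ so that $\|y\|_{\X}^{2}-m^{2s}|\T(y)|_{2}^{2}=0$ by $(\ref{eqY})$ and, since $F\ge0$ by $(f5)$,
\[
\mathcal{J}_{m}(y)=-\frac{1}{2^{*}_{s}}|\T(y)|_{2^{*}_{s},W}^{2^{*}_{s}}-\int_{\partial^{0}\mathcal{S}_{2\pi}}F(x,\T(y))\,dx\le0 ,
\]
with equality only at $y=0$; and a lateral part $\{v=y+tz_{\e}:\|v\|_{\X}=R,\ t\ge0\}$. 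Since $0\in\partial M_{\e}$, the bottom estimate already gives $\sup_{\partial M_{\e}}\mathcal{J}_{m}\ge0$, so the substance of the lemma is the reverse bound on the lateral part.

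For the lateral part I would show that $\mathcal{J}_{m}$ is anti-coercive along $Q_{\e}$. Writing $v=y+tz_{\e}$ and using $\Y\perp\langle z_{\e}\rangle$, the fact that $\T(y)$ is constant (so $\int\T(y)\T(z_{\e})\,dx=0$), and $(\ref{eqY})$, one gets the exact splitting
\[
\|v\|_{\X}^{2}-m^{2s}|\T(v)|_{2}^{2}=t^{2}\bigl(\|z_{\e}\|_{\X}^{2}-m^{2s}|\T(z_{\e})|_{2}^{2}\bigr)=:t^{2}D_{\e},\qquad D_{\e}\ge0 .
\]
Discarding $-\int F\le0$, applying Lemma \ref{lemma2} to decouple the critical term, using that $|\T(z_{\e})|_{2^{*}_{s},W}^{2^{*}_{s}}$ is bounded away from $0$ for small fixed $\e$ (by Lemma \ref{lemma1}-(ii),(iv)), so that it absorbs the error $K_{5}t^{2^{*}_{s}}\e^{N(N-2s)/(N+2s)}$, and that $|\T(y)|_{2^{*}_{s},W}^{2^{*}_{s}}$ is a fixed positive multiple of $\|y\|_{\X}^{2^{*}_{s}}$ on the one-dimensional space $\Y$, I would arrive at
\[
\mathcal{J}_{m}(v)\le\frac{t^{2}}{2}D_{\e}-\frac{c_{\e}}{2^{*}_{s}}\bigl(t^{2^{*}_{s}}+\|y\|_{\X}^{2^{*}_{s}}\bigr)
\]
for some $c_{\e}>0$. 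Since $2^{*}_{s}>2$ and $\|v\|_{\X}^{2}=\|y\|_{\X}^{2}+t^{2}\|z_{\e}\|_{\X}^{2}$, the right-hand side tends to $-\infty$ as $\|v\|_{\X}\to\infty$ in $Q_{\e}$ (split according to whether $t^{2}\|z_{\e}\|_{\X}^{2}$ or $\|y\|_{\X}^{2}$ is at least $\tfrac12\|v\|_{\X}^{2}$ and use, respectively, the $-t^{2^{*}_{s}}$ term, which beats $t^{2}$, or the $-\|y\|_{\X}^{2^{*}_{s}}$ term). Thus there is $R_{0}>0$ with $\mathcal{J}_{m}\le0$ on $\{v\in Q_{\e}:\|v\|_{\X}\ge R_{0}\}$; choosing $R>\max\{\rho,R_{0}\}$ yields $\mathcal{J}_{m}\le0$ on the whole of $\partial M_{\e}$, hence $\sup_{\partial M_{\e}}\mathcal{J}_{m}=0$.

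The main obstacle is the degeneracy of the quadratic form on $\Y$: since $\|\cdot\|_{\X}^{2}-m^{2s}|\T(\cdot)|_{2}^{2}$ vanishes identically on $\Y$, the decay of $\mathcal{J}_{m}$ at infinity along the $\Y$-directions cannot come from the quadratic term and must be supplied entirely by the concave contributions $-\frac{1}{2^{*}_{s}}|\T(\cdot)|_{2^{*}_{s},W}^{2^{*}_{s}}$ and $-\int F$; this is exactly where $(W1)$ (so that $W\ge\min_{[-\pi,\pi]^{N}}W>0$) and $(f5)$ (so that $F\ge0$) are needed, and where Lemma \ref{lemma2}, which separates the critical term on the sum $y+tz_{\e}$ up to an $\e$-controlled error, is essential. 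A routine but necessary point is the order of quantifiers: $\e$ is frozen first, small enough to be admissible both here and in Lemma \ref{Linking2}, and only then is $R=R(\e)$ chosen, which is harmless since the Linking geometry needs merely one admissible $R>\rho$.
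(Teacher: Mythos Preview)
Your argument is correct. The paper's proof, however, follows a different route: instead of discarding $-\int F\le 0$ and invoking Lemma~\ref{lemma2} to decouple the critical term, the paper keeps $F$ in play via the lower bound $F(x,t)\ge -\delta t^{2}+C_{\delta}|t|^{\beta}$ (from $(f4)$--$(f5)$, some $\beta\in(2,2^{*}_{s})$), and then uses Jensen's inequality together with the $L^{2}$-orthogonality of $\T(y)$ and $\T(z_{\e})$ to bound both $|\T(y)+t\T(z_{\e})|_{2^{*}_{s},W}^{2^{*}_{s}}$ and $|\T(y)+t\T(z_{\e})|_{\beta}^{\beta}$ from below by powers of $|\T(y)|_{2}^{2}+t^{2}|\T(z_{\e})|_{2}^{2}$; anti-coercivity then follows from an estimate written entirely in terms of $|\T(y)|_{2}$ and $t|\T(z_{\e})|_{2}$. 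Your approach is arguably leaner---it needs only $F\ge 0$ and reuses Lemma~\ref{lemma2} already proved for Lemma~\ref{Linking2}---while the paper's approach is more self-contained (no call to Lemma~\ref{lemma2}) and, by producing constants that do not rely on the fine structure of $z_{\e}$, is more directly recyclable in Section~6, where the same estimate must be made uniform in $m\in(0,m_{0})$.
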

\begin{proof}
Let $v=y+t z_{\e}\in \partial M_{\e}$. If $t=0$, it follows directly by (\ref{eqY}) and by the assumption $(f5)$ that $\mathcal{J}_{m}\leq 0 \mbox{ on } \Y$. Let $R= \|y+tz_{\e}\|_{\X}$ with $t>0$.
By using $(f4)$ and $(f5)$, we can see that for all $\delta>0$ there exists $C_{\delta}>0$ such that 
$$
F(x, t)\geq -\delta t^{2}+C_{\delta} |t|^{\beta} \mbox{ for all } t\in \R,
$$
with $\beta\in (2, 2^{*}_{s})$.
This gives
$$
\int_{\partial^{0}\mathcal{S}_{2\pi}} F(x, \T(v)) dx \geq -\delta|\T(y)+t\T(z_{\e})|_{2}^{2}+C_{\delta} |T(y)+t\T(z_{\e})|_{\beta}^{\beta}.
$$
On the other hand, by using H\"older inequality and Jensen's inequality, we can see that
\begin{align}\label{Val1}
|\T(y)+t\T(z_{\e})|_{2^{*}_{s}, W}^{2^{*}_{s}}&\geq C_{1}(|\T(y)+t\T(z_{\e})|^{2}_{2})^{\frac{2^{*}_{s}}{2}} \nonumber \\
&=C_{1} (|\T(y)|_{2}^{2}+t^{2}|\T(z_{\e})|^{2}_{2})^{\frac{2^{*}_{s}}{2}}
\end{align}
and
\begin{align}\label{Val2}
|\T(y)+t\T(z_{\e})|_{\beta}^{\beta}&\geq C_{2}(|\T(y)+t\T(z_{\e})|^{2}_{2})^{\frac{\beta}{2}} \nonumber \\
&=C _{2}(|\T(y)|_{2}^{2}+t^{2}|\T(z_{\e})|^{2}_{2})^{\frac{\beta}{2}}.
\end{align}
Then, \eqref{Val1} and \eqref{Val2} yield
\begin{align*}
\mathcal{J}_{m}(y+t z_{\e})&\leq \frac{1}{2} \|y+tz_{\e}\|^{2}_{\X}-\frac{m^{2s}}{2}|\T(y)+t\T(z_{\e})|_{2}^{2}-\frac{1}{2^{*}_{s}}|\T(y)+t\T(z_{\e})|_{2^{*}_{s}, W}^{2^{*}_{s}} \nonumber \\
&+\delta|\T(y)+t\T(z_{\e})|_{2}^{2}-C_{\delta} |T(y)+t\T(z_{\e})|_{\beta}^{\beta} \nonumber \\
&\leq \frac{t^{2}}{2} \left[\|z_{\e}\|^{2}_{\X}-\frac{m^{2s}}{2}|\T(z_{\e})|_{2}^{2}\right]-\frac{C_{1}}{2^{*}_{s}}(|\T(y)|_{2}^{2}+t^{2}|\T(z_{\e})|^{2}_{2})^{\frac{2^{*}_{s}}{2}} \nonumber \\
&+\delta (|\T(y)|_{2}^{2}+t^{2}|\T(z_{\e})|^{2}_{2})-C_{\delta} C_{2} (|\T(y)|_{2}^{2}+t^{2}|\T(z_{\e})|^{2}_{2})^{\frac{\beta}{2}} \nonumber \\
&\leq \frac{t^{2}}{2} \|z_{\e}\|^{2}_{\X}+\delta t^{2}|\T(z_{\e})|^{2}_{2}-C_{3} t^{2^{*}_{s}} |\T(z_{\e})|^{2^{*}_{s}}_{2}-C_{\delta} C_{4}t^{\beta} |\T(z_{\e})|^{\beta}_{2} \nonumber \\
&+\delta |\T(y)|^{2}_{2}-C_{3} |\T(y)|^{2^{*}_{s}}_{2}-C_{\delta}C_{4} |\T(y)|^{\beta}_{2}.
\end{align*}
In view of (\ref{important1}), we know that 
$$
\|z_{\e}\|^{2}_{\X}-m^{2s}|\T(z_{\e})|_{2}^{2}\leq S_{*}^{\frac{N}{2s}}+O(\e^{N-2s})
$$
so we deduce that 
\begin{equation}\label{normeq}
\|z_{\e}\|^{2}_{\X}\leq C_{5}+m^{2s}|\T(z_{\e})|_{2}^{2}\leq C_{5}+m^{2s} C_{6}.
\end{equation}
Therefore, we get
\begin{align}\label{belowm}
\mathcal{J}_{m}(y+t z_{\e})&\leq t^{2} (C_{5}+m^{2s}+\delta) |\T(z_{\e})|^{2}_{2}-C_{3} t^{2^{*}_{s}} |\T(z_{\e})|^{2^{*}_{s}}_{2}-C_{\delta} C_{4}t^{\beta} |\T(z_{\e})|^{\beta}_{2} \nonumber \\
&+\delta |\T(y)|^{2}_{2}-C_{3} |\T(y)|^{2^{*}_{s}}_{2}-C_{\delta}C_{4} |\T(y)|^{\beta}_{2}.
\end{align}
Taking into account $\|y+t z_{\e}\|_{\X}^{2}=m^{2s}|\T(y)|^{2}_{2}+t^{2}\|z_{\e}\|_{\X}^{2}$ and (\ref{normeq}), we can infer that when $\|y+t z_{\e}\|_{\X}\rightarrow \infty$ then $t\rightarrow \infty$ or $|\T(y)|_{2}\rightarrow \infty$, and this together with \eqref{belowm} yields
\begin{align*}
\mathcal{J}_{m}(y+t z_{\e})\rightarrow -\infty \mbox{ as } \|y+tz_{\e}\|_{\X}\rightarrow \infty.
\end{align*}

\end{proof}

Putting together Lemma \ref{Linking1}, Lemma \ref{Linking2}, Lemma \ref{psthm} and Lemma \ref{Linking3}, we can see that the assumptions of Theorem \ref{LinkingThm} are satisfied. Therefore, for all $m>0$ there exists $v_{m}\in \X$ such that $\mathcal{J}_{m}(v_{m})=c_{m}$ and $\mathcal{J}'_{m}(v_{m})=0$.
In particular, we know that $0<c_{m}<\frac{s}{N} W(0)^{-\frac{N-2s}{2s}} S_{*}^{\frac{N}{2s}}$.

\section{H\"older continuity of solutions of (\ref{P})}

\noindent
In this section we show that any solution of (\ref{P}) is a H\"older continuous function. 
\begin{lem}\label{lemmino}
Let $v\in \X$ be a weak solution to (\ref{R}). 
Then $\T(v)\in C^{0, \alpha}([-\pi,\pi]^{N})$, for some $\alpha \in (0,1)$.
\end{lem}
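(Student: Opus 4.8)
The plan is to establish H\"older regularity of the trace $\T(v)$ in the standard two-step fashion used for problems governed by the degenerate operator $\dive(\xi^{1-2s}\nabla\cdot)$ and a critical Neumann nonlinearity: first prove that $\T(v)\in L^{\infty}$, and then invoke the known boundary regularity theory for the extension equation. Since the problem is posed on the half-cylinder $\mathcal{S}_{2\pi}$ with periodic lateral boundary conditions, I would work locally near a point of $\partial^{0}\mathcal{S}_{2\pi}$ (using the periodicity to reduce to a neighborhood in $\R^{N+1}_{+}$), so that the analysis is essentially the same as in the Dirichlet or $\R^{N}$ setting.

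First I would show $\T(v)\in L^{\infty}((-\pi,\pi)^{N})$ by a Moser/Brezis--Kato type iteration. Writing the weak formulation of $(\ref{R})$ and testing with truncated powers of $v$ of the form $\phi = v\,v_{L}^{2(\beta-1)}$ (with $v_{L}=\min\{|v|,L\}$), one controls the critical term $W(x)|\T(v)|^{2^{*}_{s}-2}\T(v)$ by splitting $|\T(v)|^{2^{*}_{s}-2} = |\T(v)|^{2^{*}_{s}-2-\delta}|\T(v)|^{\delta}$ and using that $\T(v)\in L^{2^{*}_{s}}$ (Theorem \ref{thm2}) together with the trace Sobolev inequality to absorb the leading term; the subcritical term is handled easily via $(f4)$. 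Iterating over a sequence of exponents $\beta_{k}\to\infty$ yields a uniform bound and hence $\T(v)\in L^{\infty}$, which then propagates to $v\in L^{\infty}_{loc}(\overline{\mathcal{S}_{2\pi}})$ by the maximum principle / De Giorgi estimates for the weighted equation.

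Once boundedness is known, the right-hand side of the Neumann condition, namely $g(x):=m^{2s}\T(v)+W(x)|\T(v)|^{2^{*}_{s}-2}\T(v)+f(x,\T(v))$, belongs to $L^{\infty}((-\pi,\pi)^{N})$; note $g$ is periodic and, by $(W1)$--$(W2)$ and $(f2)$, at least continuous. I would then apply the regularity results for degenerate elliptic equations with an $A_{2}$-weight $\xi^{1-2s}$ and a bounded Neumann datum --- as developed by Caffarelli--Silvestre and by Cabr\'e--Sire / Jin--Li--Xiong --- to conclude that $v\in C^{0,\alpha}(\overline{\mathcal{S}_{2\pi}}\cap\{\xi<1\})$ for some $\alpha\in(0,1)$ depending only on $N,s$ and $\|g\|_{\infty}$. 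Restricting to $\xi=0$ gives $\T(v)\in C^{0,\alpha}([-\pi,\pi]^{N})$, and periodicity makes the estimate global on the cube.

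The main obstacle is the $L^{\infty}$ step: because the nonlinearity is exactly critical, the Moser iteration does not close for free, and one must carefully exploit the a priori membership $\T(v)\in L^{2^{*}_{s}}$ to make the first iteration step gain integrability (the ``Brezis--Kato trick''), choosing the truncation and the test function so that the critical contribution over the set $\{|\T(v)|>M\}$ has small norm and can be absorbed into the left-hand side. Once past this first gain of integrability, the remaining iterations are routine and the subsequent H\"older step is a black-box application of the cited regularity theory.
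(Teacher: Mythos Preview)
Your proposal is correct and follows essentially the same two-step strategy as the paper: a Brezis--Kato/Moser iteration with truncated-power test functions $v\,v_{K}^{2\beta}$ to gain integrability of $\T(v)$, handling the critical term by writing $|\T(v)|^{2^{*}_{s}-2}$ as an $L^{N/2s}$ potential and absorbing its tail on $\{|\T(v)|>M\}$ via the trace Sobolev inequality, followed by a black-box boundary regularity result for the extension problem.

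The only minor differences are cosmetic. The paper does not push the iteration all the way to $L^{\infty}$; it stops at $\T(v)\in L^{q}$ for every finite $q$ (which is all that is needed, since the Neumann datum then lies in $L^{p}$ for $p>N/2s$) and invokes Proposition~3.5 of Fall--Felli \cite{FallFelli}, a regularity statement tailored to the operator $(-\Delta+m^{2})^{s}$, in place of the Caffarelli--Silvestre/Cabr\'e--Sire/Jin--Li--Xiong references you cite. Your route via $L^{\infty}$ is slightly stronger but requires keeping track of the constants through the iteration; the paper's version avoids this. Also, your first description of the splitting ``$|\T(v)|^{2^{*}_{s}-2}=|\T(v)|^{2^{*}_{s}-2-\delta}|\T(v)|^{\delta}$'' is not quite the mechanism that makes the absorption work; the paper (and your later paragraph) correctly split according to the \emph{size} of the potential, i.e.\ $\{h\le M\}$ versus $\{h>M\}$ with $h=|\T(v)|^{2^{*}_{s}-2}\in L^{N/2s}$, which is what actually allows absorption into the left-hand side.
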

\begin{proof}
Since $v$ is a critical point for $\mathcal{J}_{m}$, we know that
\begin{align}\begin{split}\label{criticpoint}
\iint_{\mathcal{S}_{2\pi}} &\xi^{1-2s}(\nabla v \nabla \eta+m^{2}v \eta) \, dxd\xi\\
&=\int_{\partial^{0}\mathcal{S}_{2\pi}} [m^{2s}\T(v)+W(x)|\T(v)|^{2^{*}_{s}-2}\T(v)+ f(x,\T(v))]\T(\eta) \,dx
\end{split}\end{align}
for all $\eta\in \X$.\\
Let $w=vv^{2\beta}_{K}\in \X$ where $v_{K}=\min\{|v|,K\}$, $K>1$ and $\beta\geq 0$.
Taking $\eta=w$ in (\ref{criticpoint}), we deduce that 
\begin{align}\begin{split}\label{conto1}
\iint_{\mathcal{S}_{2\pi}} &\xi^{1-2s}v^{2\beta}_{K}(|\nabla v|^{2}+m^{2}v^{2}) \, dxd\xi+\iint_{D_{K}} 2\beta \xi^{1-2s}v^{2\beta}_{K} |\nabla v|^{2} \, dx d\xi  \\
&=\int_{\partial^{0}\mathcal{S}_{2\pi}} (m^{2s} \T(v)^{2}+W(x)|\T(v)|^{2^{*}_{s}}) \T(v_{K})^{2\beta} \,dx+ \int_{\partial^{0}\mathcal{S}_{2\pi}} f(x,\T(v))\T(v)\T(v_{K})^{2\beta} \,dx ,
\end{split}\end{align}
where $D_{K}=\{(x,\xi)\in \mathcal{S}_{2\pi}: |v(x, \xi)|\leq K\}$. \\
It is easy to see that
\begin{align}\begin{split}\label{conto2}
\iint_{\mathcal{S}_{2\pi}} &\xi^{1-2s}|\nabla (vv_{K}^{\beta})|^{2} dx\,d\xi  \\
&=\iint_{\mathcal{S}_{2\pi}} \xi^{1-2s}v_{K}^{2\beta} |\nabla v|^{2} dx\,d\xi+\iint_{D_{K}} (2\beta+\beta^{2}) \xi^{1-2s}v_{K}^{2\beta} |\nabla v|^{2} dx\,d\xi.
\end{split}\end{align}
Then, putting together (\ref{conto1}) and (\ref{conto2}) we get 
\begin{align}\begin{split}\label{S1}
&\|vv_{K}^{\beta}\|_{\X}^{2}\\
&=\iint_{\mathcal{S}_{2\pi}} \xi^{1-2s}[|\nabla (vv_{K}^{\beta})|^{2}+m^{2}v^{2}v_{K}^{2\beta}] dxd\xi  \\
&=\iint_{\mathcal{S}_{2\pi}} \xi^{1-2s}v_{K}^{2\beta}[ |\nabla v|^{2}+m^{2}v^{2}] dxd\xi+\iint_{D_{K}} 2\beta \Bigl(1+\frac{\beta}{2}\Bigr) \xi^{1-2s}v_{K}^{2\beta} |\nabla v|^{2} dxd\xi  \\
&\leq c_{\beta} \Bigl[\iint_{\mathcal{S}_{2\pi}} \xi^{1-2s}v_{K}^{2\beta}[ |\nabla v|^{2}+m^{2}v^{2}] dxd\xi+\iint_{D_{K}} 2\beta \xi^{1-2s}v_{K}^{2\beta} |\nabla v|^{2} dxd\xi\Bigr] \\
&=c_{\beta} \int_{\partial^{0}\mathcal{S}_{2\pi}} (m^{2s} \T(v)^{2}+W(x)|\T(v)|^{2^{*}_{s}}) \T(v_{K})^{2\beta} + f(x,\T(v))\T(v) \T(v_{K})^{2\beta} \,dx,
\end{split}\end{align}
where $c_{\beta}=1+\frac{\beta}{2}$.

By assumptions on $f$ and $W$, we deduce that  
\begin{align*}
(m^{2s}\T(v)^{2}&+W(x)|\T(v)|^{2^{*}_{s}})\T(v_{K})^{2\beta} + f(x,\T(v))\T(v) \T(v_{K})^{2\beta} \\
&\leq c_{1}(1+|\T(v)|^{2^{*}_{s}-2})\T(v)^{2}\T(v_{K})^{2\beta}+c_{2} |\T(v)|^{p-2} \T(v)^{2}\T(v_{K})^{2\beta} \mbox{ on } \partial^{0}\mathcal{S}_{2\pi}.
\end{align*}
Now, we prove that
\begin{equation*}
|\T(v)|^{2^{*}_{s}-2}+|\T(v)|^{p-2}\leq 1+h \mbox{ on } \partial^{0}\mathcal{S}_{2\pi},
\end{equation*}
for some $h\in L^{N/2s}(-\pi, \pi)^{N}$.
Firstly, we observe that
\begin{align*}
|\T(v)|^{p-2}&=\chi_{\{|\T(v)|\leq 1\}}|\T(v)|^{p-2}+\chi_{\{|\T(v)|>1\}}|\T(v)|^{p-2}\\
&\leq 1+\chi_{\{|\T(v)|>1\}} |\T(v)|^{p-2}\ \mbox{ on } \partial^{0}\mathcal{S}_{2\pi}.
\end{align*}
If $(p-2)N<4s$ then 
$$
\int_{\partial^{0}\mathcal{S}_{2\pi}} \chi_{\{|\T(v)|>1\}}|\T(v)|^{\frac{N}{2s}(p-2)} dx \leq \int_{\partial^{0}\mathcal{S}_{2\pi}} \chi_{\{|\T(v)|>1\}}|\T(v)|^{2} dx<\infty, 
$$
while if $4s\leq (p-2)N$ we have that $(p-2)\frac{N}{2s}\in [2,2^{*}_{s}]$.\\
Let us note that $|\T(v)|^{2^{*}_{s}-2}\in L^{\frac{N}{2s}}(-\pi, \pi)^{N}$.
Therefore, there exist a constant $C$ and a function $h\in L^{N/2s}(-\pi, \pi)^{N}$, $h\geq 0$ and independent of $K$ and $\beta$, such that
\begin{align}\begin{split}\label{S2}
(m^{2s}+|\T(v)|^{2^{*}_{s}-2})\T(v)^{2}\T(v)_{K}^{2\beta} &+ f(x,\T(v))\T(v)\T(v_{K})^{2\beta}\\
&\leq (C+h)v^{2}\T(v_{K})^{2\beta} \mbox{ on } \partial^{0}\mathcal{S}_{2\pi}.
\end{split}\end{align}
Taking into account (\ref{S1}) and (\ref{S2}) we have
\begin{equation*}
\|vv_{K}^{\beta}\|_{\X}^{2}\leq c_{\beta} \int_{\partial^{0}\mathcal{S}_{2\pi}} (C+h)\T(v)^{2}\T(v_{K})^{2\beta} dx,
\end{equation*}
and by the Monotone Convergence Theorem ($v_{K}$ is increasing with respect to $K$) we have as $K\rightarrow \infty$
\begin{equation}\label{i1}
\||v|^{\beta+1}\|_{\X}^{2}\leq Cc_{\beta} \int_{\partial^{0}\mathcal{S}_{2\pi}} |\T(v)|^{2(\beta +1)} dx + c_{\beta}\int_{\partial^{0}\mathcal{S}_{2\pi}}  h|\T(v)|^{2(\beta +1)}dx.
\end{equation}
Fix $M>0$ and let $A_{1}=\{h\leq M\}$ and $A_{2}=\{h>M\}$.

Then
\begin{equation}\label{i2}
\int_{\partial^{0}\mathcal{S}_{2\pi}}  h|\T(v)|^{2(\beta +1)} dx\leq M ||\T(v)|^{\beta+1}|_{2}^{2}+\varepsilon(M) ||\T(v)|^{\beta+1}|_{2^{*}_{s}}^{2},
\end{equation}
where $\displaystyle{\varepsilon(M)=\Bigl(\int_{A_{2}} h^{N/2s} dx \Bigr)^{\frac{2s}{N}}\rightarrow 0}$ as $M\rightarrow \infty$.
Taking into account (\ref{i1}) and (\ref{i2}), we get
\begin{equation}\label{regv}
\||v|^{\beta+1}\|_{\X}^{2}\leq c_{\beta}(c+M)||\T(v)|^{\beta+1}|_{2}^{2}+c_{\beta}\varepsilon(M)||\T(v)|^{\beta+1}|_{2^{*}_{s}}^{2}.
\end{equation}
By using Theorem $\ref{tracethm}$ we know that
\begin{equation}\label{S3}
||\T(v)|^{\beta+1}|_{2^{*}_{s}}^{2}\leq C^{2}_{2^{*}_{s}}\||v|^{\beta+1}\|_{\X}^{2}.
\end{equation}
Then, choosing $M$ large so that $\varepsilon(M) c_{\beta} C^{2}_{2^{*}}<\frac{1}{2}$,
and by using $(\ref{regv})$ and $(\ref{S3})$ we obtain
\begin{equation}\label{iter}
||\T(v)|^{\beta+1}|_{2^{*}_{s}}^{2}\leq 2 C^{2}_{2^{*}_{s}} \,c_{\beta}(c+M)||\T(v)|^{\beta+1}|^{2}_{2}.
\end{equation}
Then we can start a bootstrap argument: since $\T(v)\in L^{\frac{2N}{N-2s}}(-\pi, \pi)^{N}$ we can apply (\ref{iter}) with $\beta_{1}+1=\frac{N}{N-2s}$ to deduce that $\T(v)\in L^{\frac{(\beta_{1}+1)2N}{N-2s}}(-\pi, \pi)^{N}=L^{\frac{2N^{2}}{(N-2s)^{2}}}(-\pi, \pi)^{N}$. Applying (\ref{iter}) again, after $k$ iterations, we find $\T(v)\in L^{\frac{2N^{k}}{(N-2s)^{k}}}(-\pi, \pi)^{N}$, and so $\T(v)\in L^{q}(-\pi, \pi)^{N}$ for all $q\in[2,\infty)$. 
Then we can apply Proposition $3.5$ in \cite{FallFelli} to deduce that $\T(v)\in C^{0, \alpha}([-\pi, \pi]^{N})$, for some $\alpha\in (0, 1)$.

\end{proof}

\section{Periodic solutions for $m=0$}

\noindent
This last section is devoted to the proof of Theorem \ref{mthm2}. Firstly, we show that it is possible to estimate the critical levels from below and from above independently of $m$.\\
Fix $m_{0}\in (0, 1)$, and let us assume that $0<m< m_{0}$. Then we aim to prove that there exist $\sigma_{1}, \sigma_{2}>0$ independent of $m$, such that
\begin{equation}
\sigma_{1}\leq \mathcal{J}_{m}(v_{m})\leq \sigma_{2}
\end{equation}
for all $0<m< m_{0}$.
Firstly, we note that by using Proposition $2.1$ in \cite{benyi} and Theorem \ref{tracethm}, we can see that for all $z\in \Z$
$$
|\T(z)|^{2}_{2^{*}_{s}}\leq C_{*} [\T(z)]^{2}=C_{*} \sum_{k\in \mathbb{Z}^{N}} |k|^{2s}|c_{k}|^{2}\leq C_{*} |z|^{2}_{\h}\leq C_{*} \|z\|^{2}_{\X},
$$
where $c_{k}$ are the Fourier coefficients of $\T(z)$, and $C_{*}$ is independent of $m$. Then, by using H\"older inequality we can see that for all fixed $q\in [1, 2^{*}_{s}]$, there exists $C_{q}\equiv C(q, N, s)>0$ independent of $m$, such that 
$$
|\T(z)|_{q}\leq C_{q} \|z\|_{\X}
$$
for all $z\in \Z$. Let us note that when $q=2$, it results $C_{2}=1$.
Therefore, by using $(f3)$ and $(f4)$, we have for all $v\in \Z$ 
\begin{align*}
\mathcal{J}_{m}(v)&\geq \frac{1}{2} \|v\|_{\X}^{2}-\frac{m^{2s}}{2}|\T(v)|_{2}^{2}-\frac{W(0)}{2^{*}_{s}}|\T(v)|_{2^{*}_{s}}^{2^{*}_{s}} -\varepsilon |\T(v)|_{2}^{2}-C_{\varepsilon} |\T(v)|_{p}^{p} \\
&\geq \Bigl(\frac{1}{2}-\frac{m^{2s}_{0}}{2}-\varepsilon  \Bigr)\|v\|_{\X}^{2}-C'_{*}\|v\|^{2^{*}_{s}}_{\X}-C'_{p}C_{\e}\|v\|_{\X}^{p}.
\end{align*}
Choosing $\varepsilon$ sufficiently small, there exist $\sigma_{1}>0$ and $\eta>0$ such that
$$
\mathcal{J}_{m}(v)\geq \sigma_{1} \mbox{ for all  } v\in \Z: \|v\|_{\X}=\eta.
$$
Now, we can observe that, for any $v=y+tz_{\e}\in \partial M_{\e}$, we can replace the estimate (\ref{normeq}) and (\ref{belowm}) in Lemma \ref{Linking3} by 
\begin{equation}\label{normeqind}
\|z_{\e}\|^{2}_{\X}\leq C_{5}+m_{0}^{2s}|\T(z_{\e})|_{2}^{2},
\end{equation}
and 
\begin{align}\label{belowestimate}
\mathcal{J}_{m}(y+t z_{\e})&\leq t^{2} (C_{5}+m_{0}^{2s}+\delta) |\T(z_{\e})|^{2}_{2}-C_{3} t^{2^{*}_{s}} |\T(z_{\e})|^{2^{*}_{s}}_{2}-C_{\delta} C_{4}t^{\beta} |\T(z_{\e})|^{\beta}_{2} \nonumber \\
&+\delta |\T(y)|^{2}_{2}-C_{3} |\T(y)|^{2^{*}_{s}}_{2}-C_{\delta}C_{4} |\T(y)|^{\beta}_{2},
\end{align}
respectively. We also note that all constants appearing in (\ref{belowestimate}) are independent of $m$.

Since $\|y+t z_{\e}\|_{\X}^{2}=m^{2s}|\T(y)|^{2}_{2}+t^{2}\|z_{\e}\|_{\X}^{2}$ and by using (\ref{normeqind}), we can infer that 
\begin{align*}
\mathcal{J}_{m}(y+tz_{\e})\rightarrow -\infty \mbox{ as } \|y+tz_{\e}\|_{\X}\rightarrow \infty.
\end{align*}
In view of Lemma \ref{Linking2} and Lemma \ref{psthm}, we can find $0<\sigma_{2}< \frac{s}{N}W(0)^{-\frac{N-2s}{2s}}S_{*}^{\frac{N}{2s}}$ such that 
$\mathcal{J}_{m}(v_{m})=c_{m}\leq \sigma_{2}$, for all $m\in (0, m_{0})$.
By using $\mathcal{J}_{m}(v_{m})\leq \sigma_{2}$, $\mathcal{J}'_{m}(v_{m})=0$ and $(f5)$, we can deduce that 
\begin{align*}
\sigma_{2} &\geq \mathcal{J}_{m}(v_{m})-\frac{1}{2} \langle \mathcal{J}_{m}'(v_{m}), v_{m}\rangle \nonumber\\
&=\Bigl(\frac{1}{2}-\frac{1}{2^{*}_{s}}\Bigr) |\T(v_{m})|^{2^{*}_{s}}_{2^{*}_{s}, W} +\frac{1}{2}\int_{\partial^{0}\mathcal{S}_{2\pi}}  f(x, \T(v_{m})) \T(v_{m}) dx-\int_{\partial^{0}\mathcal{S}_{2\pi}}  F(x, \T(v_{m})) dx\nonumber \\
& \geq \Bigl(\frac{1}{2}-\frac{1}{2^{*}_{s}}\Bigr) |\T(v_{m})|^{2^{*}_{s}}_{2^{*}_{s}, W}+\left(\frac{1}{2}-\frac{1}{\mu}\right)\int_{\partial^{0}\mathcal{S}_{2\pi}}  f(x, \T(v_{m})) \T(v_{m}) dx \nonumber \\
&\geq \Bigl(\frac{1}{2}-\frac{1}{2^{*}_{s}}\Bigr) |\T(v_{m})|^{2^{*}_{s}}_{2^{*}_{s}, W}.
\end{align*}
Therefore $|\T(v_{m})|_{q}$ is bounded for all $q\in [1, 2^{*}_{s}]$. In particular, by using $(f2)$-$(f4)$, we have
\begin{align*}
\|v_{m}\|_{\X}^{2} &=2\mathcal{J}_{m}(v_{m})+m^{2s} |\T(v_{m})|^{2}_{2}+\frac{2}{2^{*}_{s}} |\T(v_{m})|^{2^{*}_{s}}_{2^{*}_{s}, W}+2 \int_{\partial^{0}\mathcal{S}_{2\pi}}  F(x, \T(v_{m})) dx \\
& \leq 2\sigma_{1} +m_{0}^{2s}|\T(v_{m})|^{2}_{2}+\frac{2}{2^{*}_{s}} |\T(v_{m})|^{2^{*}_{s}}_{2^{*}_{s}, W}+C_{1} |\T(v_{m})|_{2}^{2}+C_{2} |\T(v_{m})| ^{p}_{p}\leq C_{3}+C_{4},
\end{align*}
for all $m\in (0, m_{0})$.
This means that $(v_{m})_{m}$ is bounded in ${\mathbb{X}^{s}_{m}}$. 
In particular, we can see that
\begin{equation}\label{1v}
C_{3}+C_{4}\geq \|v_{m}\|^{2}_{\X}\geq \|\nabla v_{m}\|^{2}_{L^{2}(\mathcal{S}_{2\pi},\xi^{1-2s})}
\end{equation}
and
\begin{align}\label{2v}
C_{3}+C_{4}\geq \|v_{m}\|^{2}_{\X}&\geq |\T(v_{m})|^{2}_{\h}\geq [\T(v_{m})]^{2}.
\end{align} 
Now, we prove that for any $\delta>0$, the following inequality holds true
\begin{align}\label{nash}
\|v\|_{L^{2}((-\pi,\pi)^{N}\times (0,\delta),\xi^{1-2s})}^2 &\leq \frac{\delta^{2-2s}}{1-s} |\T(v)|_{2}^{2}+\frac{\delta ^{2}}{2s} \|\partial_{\xi} v\|_{L^{2}(\mathcal{S}_{2\pi},\xi^{1-2s})}^{2}
\end{align}
for any $v\in \X$.
Fix $\delta>0$ and let $v\in C^{\infty}_{T}(\overline{\R^{N+1}_{+}})$ be such that $\|v\|_{\X}<\infty$.
For any $x\in [0,T]^{N}$ and $\xi\in [0, \delta]$, we have
$$
v(x,\xi)=v(x,0)+\int_{0}^{\xi} \partial_{\xi} v(x,t) dt.
$$
By using $(a+b)^{2}\leq 2a^{2}+2b^{2}$ for all $a, b\geq 0$ we obtain
$$
|v(x,\xi)|^2 \leq 2  |v(x,0)|^{2}+2\Bigl(\int_{0}^{\xi}|\partial_{\xi} v(x,t)| dt\Bigr)^{2},
$$
and applying the H\"older inequality we deduce
\begin{equation}\label{vtii5}
|v(x,\xi)|^2 \leq 2 \Bigl[ |v(x,0)|^{2}+\Bigl(\int_{0}^{\xi} t^{1-2s}|\partial_{\xi} v(x,t)|^{2}dt\Bigr)\frac{\xi^{2s}}{2s}\,  \Bigr].
\end{equation}
Multiplying both members of (\ref{vtii5}) by $\xi^{1-2s}$ we get
\begin{equation}\label{vtii}
\xi^{1-2s}|v(x,\xi)|^2 \leq 2 \Bigl[ \xi^{1-2s}|v(x,0)|^{2}+\Bigl(\int_{0}^{\xi} t^{1-2s} |\partial_{\xi} v(x,t)|^{2}dt\Bigr)\frac{\xi}{2s} \Bigr].
\end{equation}
Integrating (\ref{vtii}) over $(-\pi,\pi)^{N}\times (0,\delta)$ we have
\begin{align}\label{nash}
\|v\|_{L^{2}((-\pi,\pi)^{N}\times (0,\delta),\xi^{1-2s})}^2 &\leq \frac{\delta^{2-2s}}{1-s} |v(\cdot,0)|_{L^{2}(-\pi,\pi)^{N}}^{2}+\frac{\delta ^{2}}{2s} \|\partial_{\xi} v\|_{L^{2}(\mathcal{S}_{2\pi},\xi^{1-2s})}^{2}.
\end{align}
By density we get the desired result.\\
Then, by Theorem \ref{thm2}, we can extract a subsequence, which we denote again by $v_{m}$, and a function $v$ satisfying $v\in L^{2}_{loc}(\mathcal{S}_{2\pi},\xi^{1-2s})$, $\nabla v\in L^{2}(\mathcal{S}_{2\pi},\xi^{1-2s})$, such that as $m\rightarrow 0$ we have
\begin{align}\begin{split}\label{convergenzafinale}
&v_{m}\rightharpoonup v \mbox{ in } L^{2}_{loc}(\mathcal{S}_{2\pi},\xi^{1-2s}), \\
&\nabla v_{m}\rightharpoonup \nabla v \mbox{ in } L^{2}(\mathcal{S}_{2\pi},\xi^{1-2s}), \\ 
&\T(v_{m})  \rightarrow \T(v)   \mbox{ in } L^{q}(-\pi, \pi)^{N} \mbox{ for } q\in [1, 2^{*}_{s}), \\
&\T(v_{m})   \rightarrow \T(v)   \mbox{ a.e. in }  (-\pi, \pi)^{N}. 
\end{split}\end{align}
At this point, we prove that $v$ is a weak solution to 
\begin{equation}\label{R'}
\left\{
\begin{array}{ll}
-\dive(\xi^{1-2s} \nabla v) =0 &\mbox{ in }\mathcal{S}_{2\pi} \\
v_{| {\{x_{i}=0\}}}= v_{| {\{x_{i}=T\}}} & \mbox{ on } \partial_{L}\mathcal{S}_{2\pi} \\
\frac{\partial v}{\partial \nu^{1-2s}}=W(x)|v|^{2^{*}_{s}-2} v+ f(x,v)   &\mbox{ on }\partial^{0}\mathcal{S}_{2\pi}.
\end{array}
\right.
\end{equation}
We know that $v_{m}$ satisfies 
\begin{align}\begin{split}\label{spqr}
\iint_{\mathcal{S}_{2\pi}} &\xi^{1-2s}(\nabla v_{m} \nabla \eta+m^{2}v_{m}\eta) \; dx\,d\xi\\
&=\int_{\partial^{0}\mathcal{S}_{2\pi}} [m^{2s}\T(v_{m})+W(x)|\T(v_{m})|^{2^{*}_{2}-2} \T(v_{m})+f(x,\T(v_{m}))] \T(\eta)  \; dx
\end{split}\end{align}
for every $\eta \in \X$.
Now, fix $\varphi \in C^{\infty}_{2\pi}(\overline{\R^{N+1}_{+}})$ such that $\nabla \varphi\in L^{2}(\mathcal{S}_{2\pi},\xi^{1-2s})$, and we introduce $\psi\in \mathcal{C}^{\infty}([0,\infty))$ defined as follows
\begin{equation}\label{xidef}
\left\{
\begin{array}{cc}
\psi=1 &\mbox{ if } 0\leq \xi\leq 1 \\
0\leq \psi \leq 1 &\mbox{ if } 1\leq \xi\leq 2 \\ 
\psi=0 &\mbox{ if } \xi\geq 2. 
\end{array}
\right.
\end{equation}
We set $\psi_{R}(\xi):=\psi(\frac{\xi}{R})$ for $R>1$. Then choosing $\eta=\varphi \psi_{R}\in \X$ in (\ref{spqr}) and taking the limit as $m\rightarrow 0$ we have 
\begin{equation}\label{limitR}
\iint_{\mathcal{S}_{2\pi}} \xi^{1-2s}\nabla v \nabla (\varphi \psi_{R}) \; dxd\xi=\int_{\partial^{0}\mathcal{S}_{2\pi}} \left[W(x)|\T(v)|^{2^{*}_{2}-2} \T(v)+ f(x,\T(v))\right]\T(\varphi) \; dx.
\end{equation}
By passing to the limit in (\ref{limitR}) as $R\rightarrow \infty$, we deduce that $v$ verifies
$$
\iint_{\mathcal{S}_{2\pi}} \xi^{1-2s} \nabla v \nabla \varphi \; dxd\xi=\int_{\partial^{0}\mathcal{S}_{2\pi}} \left[W(x)|\T(v)|^{2^{*}_{2}-2} \T(v)+ f(x,\T(v))\right]\T(\varphi) \; dx
$$
for any $\varphi \in C^{\infty}_{2\pi}(\overline{\R^{N+1}_{+}})$ such that $\nabla \varphi\in L^{2}(\mathcal{S}_{2\pi},\xi^{1-2s})$, so by density for all $\varphi\in \dot{\mathbb{X}}^{s}$.

Finally we show that $v$ is not identically zero. 
Let us denote by $\mathcal{J}_{0}$ the Euler-Lagrange functional associated to (\ref{R'}), that is
$$
\mathcal{J}_{0}(v)=\frac{1}{2}\|\nabla v\|_{L^{2}(\mathcal{S}_{2\pi}, \xi^{1-2s})}^{2}-\frac{1}{2^{*}_{s}}|\T(v)|_{2^{*}_{s}, W}^{2^{*}_{s}}- \int_{\partial^{0}\mathcal{S}_{2\pi}} F(x, \T(v)) dx
$$
for all $v\in \dot{\mathbb{X}}^{s}$.
Now, we proceed as in the proof of Lemma \ref{psthm}.
By using (\ref{convergenzafinale}) and $(f2)$-$(f4)$, we can see that
\begin{equation}\label{pf11}
\int_{\partial^{0}\mathcal{S}_{2\pi}}  F(x, \T(v_{m})) dx\rightarrow \int_{\partial^{0}\mathcal{S}_{2\pi}}  F(x, \T(v)) dx,
\end{equation}
\begin{equation}\label{pf22}
\int_{\partial^{0}\mathcal{S}_{2\pi}}  f(x, \T(v_{m})) (\T(v_{m})-\T(v)) dx \rightarrow 0,
\end{equation}
and
\begin{equation}\label{pf33}
\int_{\partial^{0}\mathcal{S}_{2\pi}}  f(x, \T(v)) (\T(v_{m})-\T(v)) dx \rightarrow 0
\end{equation}
as $m\rightarrow 0$.
Since $\langle \mathcal{J}_{0}'(v), v \rangle=0$, we have
\begin{align*}
0=\|\nabla v\|_{L^{2}(\mathcal{S}_{2\pi}, \xi^{1-2s})}^{2} - |\T(v)|^{2^{*}_{s}}_{2^{*}_{s}, W} -\int_{\partial^{0}\mathcal{S}_{2\pi}}  f(x, \T(v)) \T(v) dx
\end{align*}
and by using $(f5)$ we get 
\begin{equation}
\mathcal{J}_{0}(v)\geq \Bigl(\frac{1}{2}-\frac{1}{2^{*}_{s}}\Bigr) |\T(v)|^{2^{*}_{s}}_{2^{*}_{s}, W} +\left(\frac{1}{2}-\frac{1}{\mu}\right)\int_{\partial^{0}\mathcal{S}_{2\pi}}  f(x, \T(v)) \T(v) dx\geq 0. 
\end{equation}
By Brezis-Lieb Lemma \cite{BL}, we can note that as $m\rightarrow 0$
\begin{align}\label{a22}
\|\nabla v_{m}\|_{L^{2}(\mathcal{S}_{2\pi}, \xi^{1-2s})}^{2}  = \|\nabla (v_{m}-v)\|_{L^{2}(\mathcal{S}_{2\pi}, \xi^{1-2s})}^{2} + \|\nabla v\|_{L^{2}(\mathcal{S}_{2\pi}, \xi^{1-2s})}^{2} +o(1)
\end{align}
and
\begin{align}\label{a11}
|\T(v_{m})|^{2^{*}_{s}}_{2^{*}_{s},W} \,dx=|\T(v_{m})-\T(v)|^{2^{*}_{s}}_{2^{*}_{s}, W} + |\T(v)|^{2^{*}_{s}}_{2^{*}_{s}, W}+o(1).
\end{align}
Thus, by using (\ref{pf11}), (\ref{a22}), (\ref{a11}) and the fact that $\T(v_{m}) \rightarrow \T(v)$ in $L^{2}(-\pi,\pi)^{N}$, we have 
\begin{align}\label{C1}
\mathcal{J}_{0}(v_{m})=\mathcal{J}_{0}(v)+\left[\frac{1}{2}\|\nabla(v_{m}-v)\|_{L^{2}(\mathcal{S}_{2\pi}, \xi^{1-2s})}^{2}-\frac{1}{2^{*}_{s}} |\T(v_{m})-\T(v)|^{2^{*}_{s}}_{2^{*}_{s}, W}\right]+o(1). 
\end{align}
Since $\T(v_{m}) \rightharpoonup \T(v)$ in $L^{2^{*}_{s}}(-\pi, \pi)^{N}$, we can see
\begin{align*}
\int_{\partial^{0}\mathcal{S}_{2\pi}} &W(x)(|\T(v_{m})|^{2^{*}_{s}-2} \T(v_{m})-|\T(v)|^{2^{*}_{s}-2} \T(v))(\T(v_{m})-\T(v)) \,dx  \nonumber \\
&=\int_{\partial^{0}\mathcal{S}_{2\pi}} W(x) (|\T(v_{m})|^{2^{*}_{s}}-|\T(v_{m})|^{2^{*}_{s}-2} \T(v_{m}) \T(v)) \,dx+o(1)  \nonumber \\
&=\int_{\partial^{0}\mathcal{S}_{2\pi}} W(x) (|\T(v_{m})|^{2^{*}_{s}}-|\T(v)|^{2^{*}_{s}})  \,dx+o(1) \nonumber \\
&=\int_{\partial^{0}\mathcal{S}_{2\pi}} W(x) |\T(v_{m})-\T(v)|^{2^{*}_{s}} \,dx+o(1).
\end{align*}
This, (\ref{pf22}), (\ref{pf33}) and $\langle \mathcal{J}'_{0}(v), v_{m}-v\rangle=0$ (we recall that $\{\nabla v_{m}\}$ and $\{\T(v_{m})\}$ are bounded in $L^{2}(\mathcal{S}_{2\pi}, \xi^{1-2s})$ and $L^{2^{*}_{s}}(-\pi, \pi)^{N}$ respectively, so $v_{m}-v$ can be used as test function) yield
\begin{align*}
0&=\langle \mathcal{J}_{0}'(v_{m}), v_{m}-v \rangle \\
&=\langle \mathcal{J}_{0}'(v_{m})-\mathcal{J}_{0}'(v), v_{m}-v \rangle \\
&= \iint_{\mathcal{S}_{2\pi}} \xi^{1-2s}|\nabla (v_{m}- v)|^{2} \,dxd\xi-\int_{\partial^{0}\mathcal{S}_{2\pi}} W(x) |\T(v_{m})-\T(v)|^{2^{*}_{s}} \,dx+o(1),
\end{align*}
that is 
\begin{equation}\label{AG}
\|\nabla (v_{m}-v)\|_{L^{2}(\mathcal{S}_{2\pi}, \xi^{1-2s})}^{2}= |\T(v_{m})-\T(v)|^{2^{*}_{s}}_{2^{*}_{s}, W}+o(1).
\end{equation}
Taking into account $\mathcal{J}_{0}(v) \geq 0$, $\T(v_{m})\rightarrow \T(v)$ as $m\rightarrow 0$, and (\ref{C1}), we can infer that as $m\rightarrow 0$
\begin{align*}
\frac{1}{2}\|\nabla (v_{m}-v)\|_{L^{2}(\mathcal{S}_{2\pi}, \xi^{1-2s})}^{2}&-\frac{1}{2^{*}_{s}} |\T(v_{m})-\T(v)|^{2^{*}_{s}}_{2^{*}_{s}, W} \nonumber\\
&=\mathcal{J}_{0}(v_{m})-\mathcal{J}_{0}(v)+o(1) \nonumber \\
&\leq \mathcal{J}_{0}(v_{m})+o(1) \nonumber \\
&\leq \mathcal{J}_{m}(v_{m})+o(1) \nonumber \\
&\leq \sigma_{2}+o(1)<\frac{s}{N} W(0)^{-\frac{N-2s}{2s}} S_{*}^{\frac{N}{2s}},
\end{align*}
and by using (\ref{AG}) and $\frac{1}{2}-\frac{1}{2^{*}_{s}}=\frac{s}{N}$, we obtain for $m$ sufficiently small
\begin{equation}\label{D1G}
\|\nabla (v_{m}-v)\|_{L^{2}(\mathcal{S}_{2\pi}, \xi^{1-2s})}^{2}<W(0)^{-\frac{N-2s}{2s}} S_{*}^{N/2s}.
\end{equation}
Now, from the property $(E2)$ of the extension and the trace inequality with $m=0$ (in this case one has to replace $\X$ by $\dot{\mathbb{X}}^{s}$ and $\h$ by $\dot{\mathbb{H}}^{s}$ in Theorem \ref{tracethm}), 
we note that for any $u\in \dot{\mathbb{X}}^{s}$, it holds
\begin{equation}\label{AHPno}
[\T(u)]^{2}=|(-\Delta)^{\frac{s}{2}}\T(u)|^{2}_{L^{2}(-\pi, \pi)^{N}}\leq \|\nabla u\|_{L^{2}(\mathcal{S}_{2\pi}, \xi^{1-2s})}^{2}. 
\end{equation}
Therefore, thanks to (\ref{AG}), (\ref{yomo}) and (\ref{AHPno}), we have
\begin{align*}
\|\nabla (v_{m}-v)\|_{L^{2}(\mathcal{S}_{2\pi}, \xi^{1-2s})}^{2}&= |\T(v_{m})-\T(v)|^{2^{*}_{s}}_{2^{*}_{s}, W}+o(1) \nonumber \\
&\leq W(0) S_{*}^{-\frac{2^{*}_{s}}{2}} [\T(v_{m})-\T(v)]^{2^{*}_{s}} \nonumber\\
&\leq W(0) S_{*}^{-\frac{2^{*}_{s}}{2}} \|\nabla (v_{m}-v)\|_{L^{2}(\mathcal{S}_{2\pi}, \xi^{1-2s})}^{2^{*}_{s}}.
\end{align*}
Then, in view of (\ref{D1G}), we can deduce that $\|\nabla (v_{m}-v)\|_{L^{2}(\mathcal{S}_{2\pi}, \xi^{1-2s})}\rightarrow 0$ as $m\rightarrow 0$. Moreover, by (\ref{AG}), we get $|\T(v_{m})-\T(v)|_{2^{*}_{s}}\rightarrow 0$ as $m\rightarrow 0$.

Hence, putting together $\mathcal{J}_{m}(v_{m})\geq \sigma_{1}>0$, $\langle \mathcal{J}'_{m}(v_{m}), v_{m}\rangle=0$ and the growth assumptions on $f$, we can see that 
\begin{align*}
\sigma_{1}&\leq\mathcal{J}_{m}(v_{m})-\frac{1}{2} \langle \mathcal{J}'_{m}(v_{m}), v_{m} \rangle  \\
&=\left(\frac{1}{2}-\frac{1}{2^{*}_{s}}\right)|\T(v_{m})|_{2^{*}_{s}, W}^{2^{*}_{s}}+ \int_{\partial^{0}\mathcal{S}_{2\pi}} \left[\frac{1}{2} f(x, \T(v_{m})) \T(v_{m})-F(x, \T(v_{m}))\right] dx \\
&\leq c_{1}|\T(v_{m})|_{2^{*}_{s}}^{2^{*}_{s}}+c_{2} |\T(v_{m})|_{2}^{2}+c_{3}|\T(v_{m})|_{p}^{p},
\end{align*}
and taking the limit as $m\rightarrow 0$ in this inequality (now we know that $\T(v_{m})\rightarrow \T(v)$ in $L^{q}(-\pi, \pi)^{N}$ for any $q\in [2, 2^{*}_{s}]$), we deduce that $\T(v)$ is not identically zero. Moreover, by using the weak formulation of (\ref{P'}), $(f5)$ and $(W1)$, we can infer that $\T(v)$ cannot be constant.


\smallskip

\noindent
{\bf Acknowledgements.} 
The author warmly thanks the anonymous referee for her/his useful and nice comments on the paper. 
The manuscript has been carried out under the auspices of the INDAM - Gnampa Project 2017 titled:{\it Teoria e modelli per problemi non locali}.

\end{document}